\newcommand{\grad}{\nabla}
\renewcommand{\to}{\rightarrow}
\newcommand{\pa}{\partial}
\newcommand{\rife}[1]{(\ref{#1})}
\newcommand{\ov}[1]{\overline{#1}}
\newcommand{\dsp}{\displaystyle}
\newcommand{\intbar}{\mathop{\int\makebox(-15.5,0){\rule[6pt]{.7em}{0.3pt}}\kern-6pt}\nolimits}
\newcommand{\ii}{\infty}
\newcommand{\al}{\alpha}
\renewcommand{\a }{\alpha }
\newcommand{\D }{\Delta }
\newcommand{\e}[1]{{\,\dsp e^{\dsp #1}}}
\newcommand{\rh }{\rho }
\renewcommand{\o }{\omega }
\renewcommand{\O }{\Omega }
\def\o{\omega}
\def\p{\partial}
\newcommand{\be}{\begin{equation}}
\newcommand{\ee}{\end{equation}}
\newcommand{\beq}{\begin{equation}}
\newcommand{\eeq}{\end{equation}}
\newcommand{\R}{\mathbb{R}}
\newcommand{\N}{\mathbb{N}}
\newcommand{\dis}{\displaystyle}
\newcommand{\vn}{v_n}
\newcommand{\Ks}{\overline{K}_n}
\newcommand{\xndn}{\frac{x_n}{\delta_n}}
\newcommand{\txndn}{\tfrac{x_n}{\delta_n}}
\newcommand{\Otn}{\O_{t}^n}
\newcommand{\Ovn}{\O_{v_n^{\ast}(r)}^n}
\newcommand{\vns}{v_n^{\ast}}
\newcommand{\ton}{t_{0}}
\newcommand{\brx}{B_r(x)}
\newcommand{\sing}{\big|\tfrac{\tau_n}{|x_n|}y+\tfrac{x_n}{|x_n|}\big|^{2\alpha_n}}
\newcommand{\singx}{\big|\tfrac{\tau_n}{|x_n|}x+\tfrac{x_n}{|x_n|}\big|^{2\alpha_n}}
\newcommand{\Rns}{\overline{R}_n}
\newtheorem{theorem}{Theorem}[section]
\newtheorem{proposition}[theorem]{Proposition}
\newtheorem{definition}{Definition}[section]
\newtheorem{corollary}[theorem]{Corollary}
\newtheorem{remark}[theorem]{Remark}
\newtheorem{example}[theorem]{Example}
\newtheorem{lemma}[theorem]{Lemma}
\newcommand{\bpr}{\begin{proposition}}
\newcommand{\epr}{\end{proposition}}
\newcommand{\bex}{\begin{example}\rm}
\newcommand{\eex}{\end{example}}
\newcommand{\brm}{\begin{remark}\rm}
\newcommand{\erm}{\end{remark}}
\newcommand{\bdf}{\begin{definition}\rm}
\newcommand{\edf}{\end{definition}}
\newcommand{\bte}{\begin{theorem}}
\newcommand{\ete}{\end{theorem}}
\newcommand{\ble}{\begin{lemma}}
\newcommand{\ele}{\end{lemma}}
\newcommand{\bco}{\begin{corollary}}
\newcommand{\eco}{\end{corollary}}
\newcommand{\mycomment}[1]{}
\DeclareMathOperator*{\esssup}{ess.\,sup}
\DeclareMathOperator*{\essinf}{ess.\,inf}
\begin{document}

\newtheorem{lem}{Lemma}[section]
\newtheorem{pro}[lem]{Proposition}
\newtheorem{thm}[lem]{Theorem}
\newtheorem{rem}[lem]{Remark}
\newtheorem{cor}[lem]{Corollary}
\newtheorem{df}[lem]{Definition}

\title[A Harnack type inequality for singular Liouville type equations]
{A Harnack type inequality\\  for singular Liouville type equations}

\author{P. Cosentino}

\address{Paolo Cosentino, Department of Mathematics, Ph.D. School, University of Rome {\it ``Tor Vergata"} \\  Via della ricerca scientifica n.1, 00133 Roma, Italy. }
\email{cosentino@mat.uniroma2.it}

\thanks{2020 \textit{Mathematics Subject classification:} 35J61, 35J75, 35R05, 35B45. }

\thanks{P.C. is partially supported by the MIUR Excellence Department Project MatMod@TOV
awarded to the Department of Mathematics, University of Rome Tor Vergata, and by  INdAM-GNAMPA Project ''Mancanza di regolarit\`{a} e spazi non lisci: studio di autofunzioni e autovalori``, codice CUP E53C23001670001.}

\begin{abstract}
We obtain a Harnack type inequality for solutions of the Liouville type equation,
\begin{equation}\nonumber
 -\D u=|x|^{2\alpha}K(x)e^{\displaystyle u} \qquad\text{in} \,\,\, \Omega,
\end{equation}
where $\a\in(-1,0)$, $\O$ is a bounded domain in $\mathbb{R}^2$ and $K$ satisfies,
\begin{equation}\nonumber
 0<a\leq K(x)\leq b<+\infty.
\end{equation} 
This is a generalization to the singular case of a result by C.C. Chen and C.S. Lin [Comm. An. Geom. 1998], which considered the regular case $\alpha=0$. \\
Part of the argument of Chen-Lin can be adapted to the singular case by means of an isoperimetric inequality for surfaces with conical singularities. However, the case $\alpha\in(-1,0)$ turns out to be more delicate, due to the lack of traslation invariance of the singular problem, which requires a different approach.

\end{abstract}

\maketitle

{\bf Keywords}: Liouville-type equations, sup+inf inequality, symmetric decreasing rearrangement

\

\section{Introduction}
We are concerned with solutions of
the following singular Liouville type equation
\begin{equation}\label{Liouville}
 -\D u=|x|^{2\alpha}K(x)e^{\displaystyle u} \qquad\text{in} \,\,\, \Omega
\end{equation}
where $\a\in(-1,0)$, $\O$ is a bounded domain in $\mathbb{R}^2$ and $K$ satisfies,
\begin{equation}\label{Kbasic}
 0<a\leq K(x)\leq b<+\infty
\end{equation}
We will say that $u$ is a solution of (\ref{Liouville}) if $u$ is a distributional solution of (\ref{Liouville}),
$u\in L^1_{loc}(\O)$ and $|x|^{2\alpha}K(x)e^{\displaystyle u}\in L^1_{loc}(\O)$. As a consequence of the results of Brezis and Merle (\cite{bm}) and standard elliptic regularity
theory (\cite{GT}), such solutions satisfy $u\in C_{loc}^{1,\gamma}(\O\backslash \{0\})\cap C_{loc}^{0,\kappa}(\O)\cap W_{loc}^{2,p}(\O\backslash\{0\})\cap W_{loc}^{2,q}(\O)$, for any $\gamma\in(0,1)$, $k\in (0, k_0)$, $p\geq1$  and $q\in [1,\tfrac{1}{|\alpha|})$, where $ k_0\leq1$ is a constant which depends on $\alpha$. \\

The study of the equation (\ref{Liouville}) has been motivated by many problems arising in different fields.
We mention in particular the conformal geometry of surfaces with conical singularities (\cite{Picard},\cite{troyanov}),
the statistical mechanics description of point vortices in 2d-turbulence and of self-gravitating systems (\cite{clmp1},\cite{clmp2},\cite{wolansky}),
the Electroweak theory of Glashow-Salam-Weinberg (\cite{bt}, \cite{sy2}) and Gauge Field Theories (\cite{Tar}, \cite{tarantello}, \cite{yang}). 
\\

We are going to analyze the so called ``$\sup+\inf$'' inequality (also known as ``Harnack type" inequality (\cite{yy})) for singular Liouville
type equations of the form \eqref{Liouville}.\\
We first recall few facts about the ``$\sup+\inf$" inequality for the so called ``regular case'' , i.e. $\alpha=0$. It was first conjectured in the work of Brezis and Merle (\cite{bm}) that for any  compact set $A\Subset\O$ there exists a constant $C_1\geq 1$, which depends only on $a$ and $b$ and a positive constant $C_2$, which depends also on the distance dist$(A,\partial \O)$, such that,
\begin{equation}\label{sup+inf.intro}
 \underset{A}\sup \,u + C_1 \underset{\O}\inf\, u\leq C_2,
\end{equation}
for any solution of \eqref{Liouville} and \eqref{Kbasic}, with $\alpha=0$. The conjecture was in fact proved by Shafrir (\cite{sh}) in 1992.
Further assuming that,
\begin{equation}\label{KLip}
 ||\nabla K||_\infty\leq C,
\end{equation}
Brezis, Li and Shafrir (\cite{bls}) showed that in fact \eqref{sup+inf.intro} holds with $C_1=1$ and $C_2$ depending also on $C$. An interesting
open problem was left in \cite{bls} which was whether or not \eqref{sup+inf.intro} still holds true with $C_1=1$ if $K$ just satisfies a uniform H\"{o}lder condition.
This question was in fact addressed by Chen and Lin (\cite{CL}) in 1998, which actually proved that \eqref{sup+inf.intro}
with $C_1=1$ holds under even weaker assumptions on $K$. \\ Indeed, let $u$ be a solution  of \eqref{Liouville} with $\alpha=0$ and $K$ satisfy \eqref{Kbasic}. Also, let us assume that there exist $0\leq \overline\rho \leq \frac{1}{2}$, $\overline\sigma\geq 1$ and $B\geq 0$ such that if $|x-y|\leq \overline\rho$, then
 \begin{equation} \label{K1}
  \frac{K(x)}{K(y)}\leq \overline\sigma + \frac{B}{|\log|x-y||}.
 \end{equation}
Then, it has been shown in \cite{CL} that \eqref{sup+inf.intro} holds with $C_1=\sqrt{\overline\sigma}$ and
$C_2$ depending on $a,b,A, \O, \overline{\rho}, \overline{\sigma},$ $ B$. In particular, in case $\overline{\sigma}=1$, we see that the answer to the question of Brezis-Li-Shafrir is affirmative whenever $K$ admits just a uniform modulus of continuity of logarithmic type.\\

Concerning this point, our aim is to generalize this result to solutions of the singular Liouville equation \eqref{Liouville} with $\alpha\in (-1,0)$.
 Actually, this is a non trivial variation of the case $\alpha=0$, mainly because the singularity in the equation breaks
 the translation invariance of the problem. This issue is well known and in fact both the results of Shafrir (\cite{sh}) and Brezis-Li-Shafrir (\cite{bls})
 has been generalized to the case $\al\in (-1,0)$ in \cite{b0}, \cite{b2} respectively.
 At least to our knowledge no generalization of this sort is at hand for the result in \cite{CL} as we describe hereafter.
 We cannot discuss here the many subtle aspects related to the same problem in case $\alpha>0$ and refer to the works of Tarantello (\cite{Tar3},
 \cite{tarantello2005}) and Bartolucci-Tarantello (\cite{BT-2}) and references quoted therein for further details.



\medskip


Our main result is the following,
\begin{theorem}\label{teo1.intro} \hfill \\
Let $\O$ be a bounded domain in $\mathbb{R}^2$. Assume $u$ is a solution of
 \begin{equation}\label{maineq}
 -\D u=|x|^{2\alpha}K(x)e^{\displaystyle u} \qquad\text{in} \,\,\, \Omega
\end{equation}
and $K(x)$ satisfies \eqref{Kbasic} and \eqref{K1}. Then, for any compact set $I\subset (-1,0]$, for any $\alpha\in I$, for any compact subset $A\Subset \O$ and for any $\sigma>\overline\sigma$, there exists a constant $C=C(a,b,A,\O,\rho,\sigma,B,I)$ such that,
 \begin{equation}\label{supinf}
 \underset{A}{\sup} \, u+\sqrt{\sigma}\,\underset{\O}{\inf} \, u\leq C.
 \end{equation}
\end{theorem}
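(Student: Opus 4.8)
The proof is by contradiction, reducing \eqref{supinf} to a local blow‑up estimate whose core is a differential inequality for the distribution function of $u$, obtained from symmetric rearrangement together with an isoperimetric (Bol) inequality for conformal metrics with a conical singularity. Suppose the conclusion fails for some compact $I\subset(-1,0]$, some $A\Subset\Omega$ and some $\sigma>\overline\sigma$. Then there exist $\alpha_k\in I$, coefficients $K_k$ satisfying \eqref{Kbasic} and \eqref{K1} with the same constants, and solutions $u_k$ of \eqref{maineq} with $\sup_A u_k+\sqrt\sigma\,\inf_\Omega u_k\to+\infty$; by compactness of $I$ we may assume $\alpha_k\to\alpha_\infty\in I$, so $1+\alpha_k$ stays bounded away from $0$. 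Covering $A$ by finitely many balls $B_r(x_0)$ with $\overline{B_{2r}(x_0)}\subset\Omega$ and using $\inf_\Omega u\leq\inf_{B_{2r}(x_0)}u$, it suffices to prove a bound $\sup_{B_r(x_0)}u+\sqrt\sigma\,\inf_{B_{2r}(x_0)}u\leq C$, uniform over such balls and over $\alpha\in I$, for all small $r$. Since $\sup_A u_k\to+\infty$, for one ball of the cover $\sup_{\overline{B_r(x_0)}}u_k=:M_k=u_k(x_k)\to+\infty$; set $\delta_k=e^{-M_k/2}\to0$.

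\textbf{Core estimate: rearrangement and the conical Bol inequality.} Put $\widetilde u=u+2\alpha\log|x|$; then \eqref{maineq} becomes $-\Delta\widetilde u=K e^{\widetilde u}-4\pi\alpha\,\delta_0$ (a positive Dirac mass since $\alpha<0$), so the conformal metric $g_u:=e^{\widetilde u}|dx|^2$ has Gauss curvature $\tfrac12 K\in[\tfrac a2,\tfrac b2]$ and a conical singularity of total angle $2\pi(1+\alpha)$ at the origin. For $t\in\mathbb{R}$ let $\Omega_t=\{x\in B_\rho(x_0):u(x)>t\}$, with $\rho\leq r$ to be fixed, and $\phi(t)=\int_{\Omega_t}|x|^{2\alpha}K e^{u}\,dx$. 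By the coarea formula, the flux identity $\int_{\{u=t\}}|\nabla u|\,d\sigma=\phi(t)$ and Cauchy–Schwarz one obtains $-\phi'(t)\,\phi(t)\geq c_{a,b}\,\ell_{g_u}(\partial\Omega_t)^2$, where $\ell_{g_u}$ denotes length in $g_u$. The isoperimetric inequality of Bol type for surfaces with curvature bounded above and a conical singularity (Troyanov's version) bounds $\ell_{g_u}(\partial\Omega_t)^2$ from below by a concave function of the $g_u$‑area of $\Omega_t$ vanishing at $0$ and, once $\Omega_t$ surrounds the origin, at the critical mass $8\pi(1+\alpha)$ — rather than $8\pi$ as in the regular case. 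Inserting this gives a differential inequality for $\phi$; integrating it from $t=M_k$ downwards turns the concentration of mass (which the $\sup+\inf$ control forces to be near critical, i.e. a single bubble) into the quantitative bound $\sup u+\sqrt\sigma\,\inf u\leq C$, much as in \cite{CL}. The factor $\sqrt\sigma$ comes from the mismatch between the curvature bound $\tfrac b2$ used near the concentration point and the curvature near the minimum, which \eqref{K1} controls by $\overline\sigma\,(1+o_\rho(1))$ on $B_\rho(x_0)$; the summand $B/|\log|x-y||$ is absorbed because $\rho\to0$. This settles every case in which $x_k$ stays at positive distance from $0$ (there $|x|^{2\alpha}$ is comparable to a positive constant and the argument is essentially \cite{CL}'s) and also the case $x_k\to0$ with $\delta_k/|x_k|\to0$, where the conical Bol inequality centred at the origin applies directly to the relevant sublevel sets.

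\textbf{The main difficulty and how to get around it.} The delicate case is $x_k\to0$ with $|x_k|/\delta_k$ bounded. Rescaling $v_k(y)=u_k(x_k+\delta_k y)-M_k$, the $v_k$ converge to an entire solution of $-\Delta V=|y-p|^{2\alpha_\infty}e^V$ with $p=\lim x_k/\delta_k$ finite and generally nonzero: the limiting bubble carries a conical singularity lying \emph{off} its centre, so the configuration has no radial symmetry, and symmetric rearrangement about $0$ — hence the form of Bol's inequality used above — no longer applies. This is precisely the obstruction absent from \cite{CL}, caused by the failure of translation invariance. To handle it one works at the scale $|x_k|$ instead of $\delta_k$: on the annuli $\{|x|\sim|x_k|\}$ the weight $|x|^{2\alpha}$ is comparable to the constant $|x_k|^{2\alpha}$ and can be treated as a controlled perturbation of the regular problem, while a fixed neighbourhood of $0$ (at unit distance after rescaling by $|x_k|$, hence a scale‑stable genuinely singular problem) is treated separately; for this one needs an isoperimetric‑type estimate — equivalently, a Pohozaev balancing — that is quantitative and uniform with respect to the position $p$ of the cone point relative to the domain. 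Patching the estimates from the region near $0$, the intermediate annular region and the region away from $0$, and optimising the errors against the margin $\sigma>\overline\sigma$, yields the required uniform local bound and contradicts the hypothesis, proving Theorem \ref{teo1.intro}. The hardest step is exactly this uniform‑in‑$p$ estimate for the off‑centre conical configuration.
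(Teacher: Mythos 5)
Your overall contradiction/blow-up strategy and the idea of a weighted rearrangement combined with a Huber--Bol isoperimetric inequality for conical metrics match the paper, but you have inverted the dichotomy, and the case you dismiss as easy is precisely where the argument breaks. First, the blow-up rate: with $\delta_k=e^{-M_k/2}$ the rescaled equation at $x_k\to0$ carries the unbounded factor $|x_k|^{2\alpha_k}$, so this does not normalize the singular nonlinearity; the correct rates are $\delta_n=e^{-M_n/(2(1+\alpha_n))}$ when $|x_n|/\delta_n$ stays bounded, and $\tau_n=\delta_n^{1+\alpha_n}|x_n|^{-\alpha_n}$ when $|x_n|/\delta_n\to\infty$. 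Second, in the bounded case the maximum point converges, after rescaling, \emph{to} the singularity ($y_0=0$), the limit is the radial singular bubble of total mass $4\pi(1+\alpha_\infty)(1+1/\sqrt{\overline\sigma})$, and the weighted rearrangement with the two isoperimetric constants $4\pi(1+\alpha)$ (origin inside the superlevel set) and $4\pi$ (origin outside) closes the Chen--Lin differential inequality --- this case even yields the sharp constant $\sigma=\overline\sigma$. There is no ``off-centre cone'' obstruction there. The genuinely delicate case is $|x_n|/\delta_n\to\infty$: at scale $\tau_n$ the limit bubble is \emph{regular}, of mass $4\pi(1+1/\sqrt{\overline\sigma})$, while the singularity sits at distance $|x_n|/\tau_n\to\infty$ from the concentration point; when $|x_n|/\tau_n=o(l_n)$ the relevant superlevel sets eventually engulf the singularity, Huber then gives only $4\pi(1+\alpha)<4\pi$, and the differential inequality for $F_n$ no longer produces the needed lower bound $4\pi(1+1/\sqrt{\overline\sigma})$. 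Your assertion that ``the conical Bol inequality centred at the origin applies directly'' in this regime is exactly the step that fails.

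Consequently your proposed remedy --- an isoperimetric estimate uniform in the position $p$ of the cone point, which you acknowledge as the hardest step but do not prove --- is aimed at the wrong configuration and would not in any case repair the mass deficit above. The working resolution is different in kind: one confines the rearrangement to $\{|y|\le |x_n|/(2\tau_n)\}$ so that the isoperimetric constant remains $4\pi$, obtains decay up to roughly $(|x_n|/\tau_n)^{1/2}$, and then recovers the decay and the estimates $\int_{|y|\le l_n}\log(|y|/\rho)\,d\mu_n\,\overline K_n e^{v_n}\le C$ in the neck $|x_n|/\tau_n\lesssim |y|\le l_n$ by gluing, at three annular scales, the regular sup+inf of \cite{CL}, a sup+inf on an annulus avoiding the singularity, and the ``sup+$C$inf'' inequality of \cite{b0}, together with a Suzuki-type mean-value lemma and Harnack. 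This neck analysis is also the only place where the strict hypothesis $\sigma>\overline\sigma$ is consumed (when $l_n=\rho/\tau_n$ one only recovers mass $4\pi(1+1/\sqrt{\overline\sigma})$ up to an error $C(|x_n|/\tau_n)^{-\kappa}+C|\log\tau_n|^{-1}$); your sketch attributes the constant $\sqrt\sigma$ to the modulus of continuity of $K$, which only accounts for $\overline\sigma$ and leaves the actual role of $\sigma>\overline\sigma$ unexplained.
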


The previous theorem easily implies the following corollary:

\begin{corollary} \label{cor1}
Let $\O$ be a bounded domain in $\mathbb{R}^2$ and $\a\in(-1,0]$. \\
Assume $u$ is a solution of (\ref{maineq}) and $K(x)$ satisfy (\ref{Kbasic}) and is H\"{o}lder continuous, that is,
\[
 |K(x)-K(y)|\leq \tilde C|x-y|^{\theta},
\]
for $x,y\in\overline{\O}$ and some constants $\tilde C>0$ and $0<\theta<1$. \\ Then, for any compact set $I\subset (-1,0]$ with $\alpha\in I$, any compact subset $A$ of $\O$ and for any $\sigma>1$, there exists a constant $C=C(a,b,A,\O,\tilde C,\theta,\sigma,I)$ such that it holds the following inequality
 \begin{equation}\nonumber
 \underset{A}{\sup} \, u+\sqrt{\sigma}\,\underset{\O}{\inf} \, u\leq C.
 \end{equation}
\end{corollary}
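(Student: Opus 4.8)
The plan is to deduce Corollary~\ref{cor1} directly from Theorem~\ref{teo1.intro} by checking that a power-type (H\"older) modulus of continuity for $K$ forces the weaker logarithmic-type bound \eqref{K1} to hold with $\overline\sigma=1$, after which the conclusion of the theorem for every $\sigma>1$ transfers with no change.

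First I would note that, since $K(y)\geq a>0$, for any $x,y$ with $0<|x-y|\leq\tfrac12$ one has
\begin{equation}\nonumber
\frac{K(x)}{K(y)}=1+\frac{K(x)-K(y)}{K(y)}\leq 1+\frac{\tilde C}{a}\,|x-y|^{\theta},
\end{equation}
while the case $x=y$ is trivial. Next I would invoke the elementary fact that the function $t\mapsto t^{\theta}|\log t|$ is bounded on $(0,\tfrac12]$ — it is continuous there and tends to $0$ as $t\to 0^{+}$ — and set
\begin{equation}\nonumber
M:=\sup_{0<t\leq 1/2}t^{\theta}|\log t|<+\infty,\qquad B:=\frac{\tilde C}{a}\,M.
\end{equation}
Then for $0<|x-y|\leq\tfrac12$ we get $\tfrac{\tilde C}{a}|x-y|^{\theta}\leq\tfrac{B}{|\log|x-y||}$, so that \eqref{K1} holds with $\overline\rho=\tfrac12$, $\overline\sigma=1$, and this $B$, which depends only on $\tilde C$, $a$, $\theta$.

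Finally I would apply Theorem~\ref{teo1.intro} with these values of $\overline\rho,\overline\sigma,B$: for any compact $I\subset(-1,0]$, any $\alpha\in I$, any $A\Subset\O$, and any $\sigma>\overline\sigma=1$, there is a constant $C=C(a,b,A,\O,\overline\rho,\sigma,B,I)$ with $\sup_{A}u+\sqrt{\sigma}\,\inf_{\O}u\leq C$; since $\overline\rho=\tfrac12$ is universal and $B=B(\tilde C,a,\theta)$, the dependence reduces to $C=C(a,b,A,\O,\tilde C,\theta,\sigma,I)$, which is exactly the claim. There is essentially no obstacle here: the only (minor) point is the boundedness of $t^{\theta}|\log t|$ near $t=0$, which is precisely the mechanism converting H\"older continuity into the logarithmic-type control required by \eqref{K1}.
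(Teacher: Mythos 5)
Your proposal is correct and takes essentially the same route as the paper, which simply observes that Hölder continuity implies \eqref{K1} with $\overline\sigma=1$ and then invokes Theorem \ref{teo1.intro}; your write-up supplies the (easy) details of that observation, namely the lower bound $K\geq a$ and the boundedness of $t^{\theta}|\log t|$ on $(0,\tfrac12]$.
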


The proof of this corollary follows directly from theorem \ref{teo1.intro}, just observing that if $K$ is H\"{o}lder continuous then it satisfies (\ref{K1}) with $\overline\sigma=1$.\\
\hfill \\

Let us illustrate the main ideas behind the proof of Theorem \ref{teo1.intro}. Arguing by contradiction, we assume the existence of a sequence of solutions $u_n$ and potentials $K_n$ satisfying
$$
 -\D u_n=|x|^{2\alpha_n}K_ne^{\dis u_n} \quad \text{in} \,\, \O \quad\qquad \text{with} \,\,\alpha_n\rightarrow \alpha_\infty\in(-1,0]
 $$
 $$
 0<a\leq K_n \leq b<+\infty,
 $$
 the existence of $\overline\rho\in(0,\frac{1}{2}]$, $\overline\sigma\geq1$ and $B\geq0$ such that, for $|x-y|\leq \overline\rho$
 \[
  \frac{K_n(x)}{K_n(y)}\leq\overline\sigma+\frac{B}{|\log|x-y||}
 \]
for every $n$, for which the quantity
\begin{equation}
 \label{assurdo1} \tfrac{1}{\sqrt{\sigma}}\,\underset{A}{\sup} \, u_n+\underset{\O}{\inf} \, u_n\to+\infty, \qquad \text{as} \,\, n\rightarrow+\infty,
\end{equation}
for some $\sigma>\overline \sigma$. In particular, this implies that
\begin{equation}\label{assurdo}
 \tfrac{1}{\sqrt{\overline\sigma}}\,\underset{A}{\sup} \, u_n+\underset{\O}{\inf}\, u_n\rightarrow +\infty  \qquad \text{as} \,\, n\rightarrow+\infty.
 \end{equation}
At this point, let us denote with $x_n$ any maximum point of $u_n$ inside the compact set $A$, namely
\[
 M_n:=u_n(x_n)=\underset{A}\sup\, u_n
\]
so that in particular $M_n\to+\infty$. Let us consider the case in which $x_n\to 0$, otherwise the contradiction follows by
the theorem of Chen-Lin (\cite{CL}) for $\alpha=0$. Moreover, by dilation invariance, we can assume without loss of generality
that $\{x_n\}\subset\subset B_1\subset\subset A$, where $B_1$ denotes the disk in the plane of radius $1$. We run a
well known blow-up argument around this local maximum (\cite{b2}, \cite{bt}, \cite{ls}) which is defined using the following quantity,
$$
\delta_n=\exp({-\tfrac{M_n}{2(1+\alpha_n)}})\to 0.
$$
Let $\rho=\tfrac{\overline{\rho}}{4}$ and let us define,
\[
 v_n(y):=u_n(x_n+\delta_n y)-u_n(x_n), \,\,\,\,\,\,\,\, y\in B_{\frac{\rho}{\delta_n}}(0),
\]
which solves
\[
 -\D {v}_n=|y+\tfrac{x_n}{\delta_n}|^{2\alpha_n}K(x_n +\delta_ny)e^{\dis  v_n(y)}, \,\,\,\,\,\,\,\, y\in B_{\frac{\rho}{\delta_n}}(0).
\]

At this point the lack of translation invariance plays a crucial role, because it becomes fundamental to understand how fast the sequence of local maximum
points $x_n$ is converging to $0$ compared with the ``blow-up rate" $\delta_n$. Therefore we are naturally led to analyze two different cases:
\medskip
\begin{itemize}
 \item[(I)] there exists a constant $C>0$ such that $\tfrac{|x_n|}{\delta_n}\leq C$, for all $n\in\N$,
 \medskip
 \item[(II)] there exists a subsequence such that $\tfrac{|x_n|}{\delta_n}\to+\infty$, as $n\to+\infty$.
\end{itemize}
\medskip
If (I) holds, we can prove the existence of a subsequence of $v_n$ which locally converges to a solution of an entire singular Liouville problem, which has the minimal possible total curvature, that is $4\pi(1+\alpha_\infty)(1+{\tfrac{1}{\sqrt{\overline\sigma}}})$ (\cite{b}).
In particular, in this case we can work out a careful adaptation of the Chen-Lin argument (\cite{CL}), based on a weighted rearrangement and
isoperimetric inequalities. We discuss the details of this part of the proof in Section \ref{Case I}. Remark that in this case we obtain a sharper estimate, namely we prove
that \eqref{supinf} holds with $\sigma=\overline{\sigma}$, see Remark \ref{remark1}.

\medskip

On the other hand, if (II) holds,
then we consider a different blow-up rate, that is,
\[
 \tau_n=\tfrac{\delta_n^{(1+\alpha_n)}}{|x_n|^{\alpha_n}}.
\]
It is still possible to run a blow-up argument, namely defining,
\[
 \hat{v}_n(y):=u_n(x_n+\tau_n y)-u_n(x_n), \,\,\,\,\,\,\,\, y\in B_{\frac{\rho}{\tau_n}}(0),
\]
which solves
\[
 -\D \hat{v}_n=\big|\tfrac{\tau_n}{|x_n|}y+\tfrac{x_n}{|x_n|}\big|^{2\alpha_n}K(x_n +\tau_ny)e^{\dis\hat v_n(y)}, \,\,\,\,\,\,\,\, y\in B_{\frac{\rho}{\tau_n}}(0).
\]
We notice that the rescaled equation contains a singularity in $-\tfrac{x_n}{\tau_n}$, which satisfies $\tfrac{|x_n|}{\tau_n}\to\infty$.
Thus, in this case we are able to prove the existence of a subsequence of $\hat v_n$ which actually converges to a solution of the Liouville problem in $\R^2$ without the conical singularity, whose minimal total curvature is $4\pi(1+{\tfrac{1}{\sqrt{\overline\sigma}}})$ (\cite{CL}).\\
Although the situation could appear easier than the previous one, here we have to deal with a genuine new difficulty. In particular, let us define
\[
 l_n:=\sup\Big\{l\leq \tfrac{\rho}{\tau_n}|\int_{|y|\leq l}\sing K(x_n +\tau_ny)e^{\dis \hat v_n(y)}\,dy\leq 4\pi\Big(1+{\tfrac{1}{\sqrt{\overline\sigma}}}\Big)\Big\}.
\]

This quantity plays a crucial role in the contradiction argument in both cases. The subtle point here is that we do not know how
the location of the singularity (which is $-\tfrac{x_n}{\tau_n}$) behaves with respect to $l_n$.
This forces us again to analyze two different cases separately:
\begin{itemize}
 \item [(i)] there exists $\epsilon_0>0$ such that $\epsilon_0l_n\leq\tfrac{|x_n|}{\tau_n}$,
 \item [(ii)] $\tfrac{|x_n|}{\tau_n}=o(l_n)$.
\end{itemize}
The subcase (i) can be discussed essentially as in \cite{CL}, although again we need a careful adaptation of the
rearrangement argument due to the singularity. However, in this case, once again we can prove that \eqref{supinf} holds with $\sigma=\overline\sigma$, see Remark \ref{remark2}.\\
On the contrary, this is not anymore possible for the subcase (ii), for which a subtle point arise in the argument in \cite{CL},
because for $n$ large we are not able to obtain a good estimate of the decay of the solutions in the ``neck'' region $\tfrac{|x_n|}{\tau_n}\leq |x|\leq l_n$.
Actually, even if we had such an improved estimates, a step by step adaptation of the rearrangement argument in \cite{CL} would not end up with a contradiction,
since, the singularity being contained in the superlevel sets of the solutions for $|x|$ large, we would have a much worse isoperimetric constant, see Proposition \ref{Huber.App.prop} in the appendix. As a consequence, we would miss a suitable version of \eqref{diffeq2} below, which is
a subtle differential inequality for the total ``Gaussian curvature" in the domain. This is why we have to attack the problem by a different approach. The underlying idea is to recover the needed estimates by
using some argument in \cite{ls} together with the ``$\sup+C\inf$'' inequality in \cite{b0}. The argument is not straightforward as it
requires a sort of glueing of different techniques at different scales.\\
Unfortunately, we are not able to prove the sharper result in general, although the set of functions for which the inequality \eqref{supinf} for $\sigma=\overline\sigma$ fails seems to be very thin, see Remark \ref{goodclass}. \\
A natural sufficient condition which guarantees that \eqref{supinf} holds for $\sigma=\overline\sigma$ is for example that,
\[
 \int_{|x-x_n|\leq \rho}|x|^{2\alpha_n}{K}_{n}(y)e^{\displaystyle u_n(x)}\,dx\geq 4\pi\Big(1+{\tfrac{1}{\sqrt{\overline\sigma}}}\Big),
\]
but the problem is still open in general.\\

This paper is organised as follows. \\
In Section \ref{Case (II)} we give a detailed proof of Case (II), while in Section \ref{Case I} we give a sketch of the proof of Case (I). In Section \ref{Remarks and} we make some final remarks on the proof, whereas in Section \ref{Example} we give an explicit example of the inequality and shortly discuss a geometric application of the result. Lastly, in the appendix, Section \ref{Appendixsection}, we present some technical results. In particular, we prove a generalization of a lemma in \cite{suz2}, we state the Huber inequality (\cite{hu}) and prove a lemma about the measure of level sets of solutions to Liouville type equations.

\hfill\\

\section{Case (II)}\label{Case (II)}
In this section we analyze the second possibility, that is, in this case there exists a subsequence such that $\frac{|x_n|}{\delta_n}\to+\infty$. Let us set
\[
\tau_n:=\frac{\delta_n^{1+\alpha_n}}{|x_n|^{\alpha_n}}.
 \]
We notice that, $\tau_n\to 0$ and  $\frac{\tau_n}{|x_n|}=(\frac{\delta_n}{|x_n|})^{(1+\alpha_n)}\to 0$. \\
Let us define
\[
 v_n(y)=u_n(\tau_ny+x_n)-u_n(x_n), \,\,\,\,\,\,\,\, y\in B_{\frac{1}{2\tau_n}}(0)
\]
and consider $n$ sufficiently large so that $|x_n|\leq\frac{1}{2}$ and $\tau_n y+x_n\in A$, whenever $y\in B_{\frac{1}{2\tau_n}}(0)$.
The function $v_n$ satisfies the following
\begin{equation} \label{vequation2}
 -\D v_n(y)=\big|\tfrac{\tau_n}{|x_n|}y+\tfrac{x_n}{|x_n|}\big|^{2\alpha_n}\Ks(y)e^{\displaystyle v_n(y)}, \,\,\,\,\,\,\,\, y\in B_{\frac{1}{2\tau_n}}(0),
\end{equation}
\[
 v_n(y)\leq 0=v_n(0),
\]
where $\Ks(y)=K_n(\tau_ny+x_n)$.  \\
Now, by using Green representation's formula, for a fixed $l$ and for every $|y|\leq l$, it holds that
\begin{align} \nonumber
 v_n(y)&=\int_{|\zeta|\leq 2l}\big|\tfrac{\tau_n}{|x_n|}\zeta+\tfrac{x_n}{|x_n|}\big|^{2\alpha_n} G(y,\zeta)\Ks(\zeta)e^{\dis v_n(\zeta)}\,d\zeta - \int_{|\zeta|=2l}\frac{\partial G}{\partial r}(y,\zeta)\vn(\zeta)\,d\sigma(\zeta) \\ \label{Green4}
 &\geq -C\Big[1 + \int_{|\zeta|=2l}|\vn(\zeta)|\,d\sigma(\zeta)\Big],
\end{align}
where $G(y,\zeta)$ is the Green's function of $-\D$ on $B_{2l}(0)$ and $\frac{\partial G}{\partial r}(y,\zeta)$ its radial derivative calculated in $|\zeta|=2l$, whose formulas are respectively
\[G(y,\zeta)=-\frac{1}{2\pi}\log|y-\zeta|+\frac{1}{2\pi}\log\Big|\tfrac{\tilde{y}-\zeta}{2l}|y|\Big|,
\]
\[\frac{\partial G}{\partial r}(y,\zeta)=\frac{1}{4\pi l}\frac{|y|^2-(2l)^2}{|y-\zeta|^2},
\]
where $\tilde{y}=\frac{(2l)^2}{|y|^2}y$ is the inverse of $y$  with respect to $B_{2l}(0)$.\\
At this point, we want to estimate the integral in (\ref{Green4}).
Firstly, let us define,
for every open set $\omega$, 
\[
\mu_n(\omega):=\int_{\omega}\singx\,dx,
\]
and estimate $\mu_n(B_r(0))$ from below and from above.
We notice that, if $-\frac{x_n}{\tau_n}\notin B_{2r}(0)$, then $\sing\in\big[(\frac{3}{2})^{2\alpha_n},(\frac{1}{2})^{2\alpha_n}\big]$ for every $y\in B_r(0)$,
and this implies that,
\begin{equation}
\nonumber
\big(\tfrac{3}{2})^{2\alpha_n}\pi r^{2}\leq\mu_n(B_r(0))\leq(\tfrac{1}{2})^{2\alpha_n}\pi r^{2}.
\end{equation}
Otherwise, if $-\frac{x_n}{\tau_n}\in B_{2r}(0)$, then we deduce that
\begin{align*}
 \int_{B_r(0)}\big|\tfrac{\tau_n}{|x_n|}x&+\tfrac{x_n}{|x_n|}\big|^{2\alpha_n}\,dx=(\tfrac{\tau_n}{|x_n|})^{2\alpha_n}\int_{B_r(0)}|x+\tfrac{x_n}{\tau_n}|^{2\alpha_n}\,dx \\&\leq (2r)^{-2\alpha_n}\int_{B_r(\frac{x_n}{\tau_n})}|y|^{2\alpha_n}\,dy \\ &\leq (2r)^{-2\alpha_n}\left(\int_{B_r(\frac{x_n}{\tau_n})\cap \{|y|>|y-\frac{x_n}{\tau_n}|\}}|y|^{2\alpha_n}\,dy+\int_{B_r(\frac{x_n}{\tau_n})\cap \{|y|\leq|y-\frac{x_n}{\tau_n}|\}}|y|^{2\alpha_n}\,dy\right) \\ &\leq (2r)^{-2\alpha_n}\left(\int_{B_r(\frac{x_n}{\tau_n})}|y-\tfrac{x_n}{\tau_n}|^{2\alpha_n}\,dy+\int_{B_r(0)}|y|^{2\alpha_n}\,dy\right) \\ &\leq \tfrac{2^{1-2\alpha_n}\pi}{1+\alpha_n}r^2
\end{align*}
and
\begin{align*}
 \int_{B_r(0)}\singx\,dx &=(\tfrac{\tau_n}{|x_n|})^{2\alpha_n}\int_{B_r(0)}|x+\tfrac{x_n}{\tau_n}|^{2\alpha_n}\,dx\\&=(\tfrac{\tau_n}{|x_n|})^{2\alpha_n}\int_{B_r(\frac{x_n}{\tau_n})}|y|^{2\alpha_n}\,dy  \geq
  3^{2\alpha_n} (\tfrac{\tau_n}{|x_n|}r)^{2\alpha_n}\pi r^{2},
\end{align*}
where in the last inequality we used the fact that $|y|^{2\alpha_n}\geq(3r)^{2\alpha_n}$.  \\
Hence, we have that,
\begin{align}
\label{raggibuoni2}
(\tfrac{3}{2})^{2\alpha_n}\pi r^{2}\leq&\mu_n(B_r(0))\leq(\tfrac{1}{2})^{2\alpha_n}\pi r^{2}\hspace{1,55cm}\text{if}\,\, r\leq\tfrac{|x_n|}{2\tau_n},
\\ \label{raggicattivi2}
3^{2\alpha_n}\pi (\tfrac{\tau_n}{|x_n|}r)^{2\alpha_n} r^{2}\leq&\mu_n(B_r(0))\leq (\tfrac{2}{2^{2\alpha_n}})\tfrac{\pi}{1+\alpha_n} r^{2} \hspace{1cm}\text{if}\,\, r>\tfrac{|x_n|}{2\tau_n}.
\end{align}

At this point, we can estimate \eqref{Green4}. Infact, recalling that $v_n\leq0$, then
\begin{eqnarray}\nonumber
 \int_{|\zeta|=2l}|\vn(\zeta)|\,d\sigma(\zeta)&=&\Big|\int_{|\zeta|=2l}\vn(\zeta)\,d\sigma(\zeta)\Big|.
\end{eqnarray}
Now, let us change variables $\zeta=(r\cos(\theta),r\sin(\theta))$ and define $$\hat{v}_n(r,\theta)=v_n((r\cos(\theta),r\sin(\theta))).$$
Thus, we have that
\begin{eqnarray}\nonumber
 \int_{|\zeta|=2l}|\vn(\zeta)|\,d\sigma(\zeta)&=&2l\Big|\int_0^{2\pi}\hat{v}_n(2l,\theta)\,d\theta\Big|\leq 2l\int_0^{2l}\Big|\int_0^{2\pi}\frac{\partial\hat{v}_n}{\partial r}(r,\theta)\,d\theta\Big|dr.
\end{eqnarray}
In order to estimate the last integral we need the following calculations
\begin{align*}\nonumber
 \Big|\int_0^{2\pi}\frac{\partial\hat{v}_n}{\partial r}(r,\theta)\,d\theta\Big|&=\frac{1}{r}\Big|\int_0^{2\pi}\frac{\partial\hat{v}_n}{\partial r}(r,\theta)r\,d\theta\Big|\\
 \nonumber &=\frac{1}{r}\Big|\int_{|\zeta|\leq r}\Delta v_n(\zeta) \,d\zeta\Big| \\ \nonumber &=\frac{1}{r}\int_{|\zeta|\leq r}\big|\tfrac{\tau_n}{|x_n|}\zeta+\tfrac{x_n}{|x_n|}\big|^{2\alpha_n}\Ks(\zeta)e^{\dis{v}_n(\zeta)} \,d\zeta \\ \nonumber
 &\leq \frac{b}{r}\int_{|\zeta|\leq r}\big|\tfrac{\tau_n}{|x_n|}\zeta+\tfrac{x_n}{|x_n|}\big|^{2\alpha_n} \,d\zeta 
 \\ \nonumber
 &\leq C r
 \end{align*}
where we used \eqref{raggibuoni2} and \eqref{raggicattivi2} to estimate the last integral. Hence,
\begin{eqnarray}\nonumber
 \int_{|\zeta|=2l}|\vn(\zeta)|\,d\sigma(\zeta)&\leq& 2l\int_0^{2l}\Big|\int_0^{2\pi}\frac{\partial\hat{v}_n}{\partial r}(s,\theta)\,d\theta\Big|ds\leq C l^{3}
\end{eqnarray}
and, by (\ref{Green4}), for every $|y|\leq l$, we have
\[
 v_n(y)\geq -C(1+l^{3}),
\]
for any fixed $l$.
\\
We notice that, for every $y$ in a compact set, the sequence of points $\tfrac{\tau_n}{|x_n|}y+\tfrac{x_n}{|x_n|}$ is converging, up to a subsequence, to a point $y_0\in\{x\big| |x|=1\}$.
By standard elliptic estimates (\cite{GT}), we can pass to subsequences $\{v_n\}$, $\{\Ks\}$ such that
\begin{eqnarray*}
 v_n\to w \qquad &\text{in}& \,\, C_{loc}^{1,\gamma}(\R^2\backslash \{y_0\})\cap W_{loc}^{2,p}(\R^2\backslash\{y_0\})\cap W_{loc}^{2,q}(\R^2)\cap C_{loc}^{0,\kappa}(\R^2), \\
 \Ks\overset{\star}{\rightharpoonup} K_0 \qquad &\text{in}& \,\, L^{\infty}_{loc}(\R^2),
\end{eqnarray*}
with $\gamma\in(0,1)$, $k\in (0,k_\infty)$ for some $k_\infty\leq1$ which depends on $\alpha_\infty$, $p\geq1$  and $q\in [1,\frac{1}{|\alpha_\infty|})$. Then $w$ weakly satisfies the following equation
\begin{equation}
 -\D w=K_0e^{\displaystyle w} \qquad \text{in}\,\,\mathbb{R}^2.
\end{equation}
From (\ref{Kbasic}) and (\ref{K1}) we deduce that $a\leq K_0\leq b$ and
\[
 \frac{\Ks(x)}{\Ks(y)}\leq {\overline\sigma}+\frac{B}{|\log (\tau_n|x-y|)|}\longrightarrow
{\overline\sigma},\qquad \text{for} \,\,n\to+\infty,\]
for every $x,y$ in a compact set. \\
This implies that for a fixed $\epsilon>0$ and for $n$ sufficiently large
\begin{eqnarray}\label{proofK0}
 \Ks(x)\leq\overline\sigma(1+\epsilon)\Ks(y),
\end{eqnarray}
for $|x-y|<\overline\rho\tau_n^{-1}$. Let us define $A_1:=\{x| K_0(x)>\esssup K_0 -\epsilon\}$ and $A_2:=\{x| K_0(x)<\essinf K_0 +\epsilon\}$ and consider $\tilde{l}$ large enough so that $A_1\cap B_{\tilde{l}}(0)$ and $A_2\cap B_{\tilde{l}}(0)$ have positive measure. Hence,
\[
 \Ks(x)\chi_{A_1}(x)\chi_{A_2}(y)\leq\overline\sigma(1+\epsilon)\Ks(y)\chi_{A_1}(x)\chi_{A_2}(y)
\]
and integrating both sides over $B_{\tilde{l}}(0)\times B_{\tilde{l}}(0)$ and using Fubini, we deduce that
\begin{align*}
 \int_{B_{\tilde l}(0)}\Big(\int_{B_{\tilde l}(0)}\Ks(x)&\chi_{A_1}(x)\,dx\Big)\chi_{A_2}(y)\,dy\leq \\ &\leq\overline\sigma(1+\epsilon)\int_{B_{\tilde l}(0)}\Big(\int_{B_{\tilde l}(0)}\Ks(y)\chi_{A_2}(y)\,dy\Big)\chi_{A_1}(x)\,dx,
\end{align*}
for $n$ sufficiently large and $|x-y|<\overline\rho\tau_n^{-1}$. \\
Passing to the limit, as $n\to+\infty$, and by using the $\star$-weak convergence of $\Ks$, we have
\begin{align*}
 \int_{B_{\tilde l}(0)}\Big(\int_{B_{\tilde l}(0)}K_0(x)&\chi_{A_1}(x)\,dx\Big)\chi_{A_2}(y)\,dy\leq
 \\
 &\leq\overline\sigma(1+\epsilon)\int_{B_{\tilde l}(0)}\Big(\int_{B_{\tilde l}(0)}K_0(y)\chi_{A_2}(y)\,dy\Big)\chi_{A_1}(x)\,dx
\end{align*}
and by using the definition of $A_1$ and $A_2$, then
\[
 (\esssup K_0-\epsilon)<\overline\sigma(1+\epsilon)(\essinf K_0+\epsilon),
\]
that is
\[
 \frac{(\esssup K_0-\epsilon)}{(\essinf K_0+\epsilon)}<\overline\sigma(1+\epsilon).
\]
Letting $\epsilon$ go to $0^+$, we have
\[
 \frac{\esssup \, K_0}{\essinf K_0}\leq \overline\sigma.
\]
Therefore, by using Theorem 1.1 (\cite{CL}), we have that
\[
 I_w=\int_{\R^2}K_0e^{\displaystyle w}\geq 4\pi\Big(1+\sqrt{\frac{ess.inf \, K_0}{ess.sup \, K_0}}\Big)\geq 4\pi\Big(1+{\frac{1}{\sqrt{\overline\sigma}}}\Big).
\]
Now let $\rho=\frac{1}{4}\overline\rho$, $L_n=\rho e^{\frac{M_n}{2}}|x_n|^{\alpha_n}=\rh\tfrac{1}{\tau_n}$ and $G(x,z)$ the Green's function of $-\D$ on $B_{\rho}(x_n)$. Then, using the fact that $G(x_n,\tau_ny+x_n)=-\tfrac{1}{2\pi}\log(|\tau_ny|)+\tfrac{1}{2\pi}\log(\rho)$ we have
\begin{align}
\nonumber
 M_n&=\int_{|z-x_n|\leq\rho}|z|^{2\alpha_n}K_n(z)e^{\dis u_n(z)}G(x_n,z)\,dz + \frac{1}{2\pi\rho}\int_{|z-x_n|=\rho}u_n(z)\,dl \\ \nonumber
 &=\int_{|y|\leq L_n}\big|\tfrac{\tau_n}{|x_n|}y+\tfrac{x_n}{|x_n|}\big|^{2\alpha_n}\Ks(y)e^{\displaystyle v_n(y)}G(x_n,\tau_ny+x_n)\,dy + p_n\\
 &=\int_{|y|\leq L_n}\Big(\tfrac{1}{2\pi}\log(\tfrac{1}{\tau_n})-\tfrac{1}{2\pi}\log\big(\tfrac{|y|}{\rho}\big)\Big)\sing\Ks(y)e^{\displaystyle v_n(y)}\,dy + p_n,  \label{stimamaxbuona}
\end{align}
where $p_n=\frac{1}{2\pi\rho}\int_{|z-x_n|=\rho}u_n(z)\,dl$.
\\

We can now prove that $I_w=4\pi\Big(1+{\frac{1}{\sqrt{\overline\sigma}}}\Big)$. Indeed, if we assume the strict inequality, we can take an $\epsilon_1>0$ and $n,l$ large enough such that $l<L_n$ and
\[
 \int_{|y|\leq l}\sing\Ks(y)e^{\displaystyle v_n(y)}\,dy\geq(1+2\epsilon_1)4\pi\Big(1+{\frac{1}{\sqrt{\overline\sigma}}}\Big).
\]
Then, by using (\ref{stimamaxbuona}),
\begin{eqnarray*}
 M_n&\geq& \int_{|y|\leq l}\Big(\tfrac{1}{2\pi}\log(\tfrac{1}{\tau_n})-\tfrac{1}{2\pi}\log\big(\tfrac{|y|}{\rho}\big)\Big)\sing\Ks(y)e^{\displaystyle v_n(y)}\,dy + p_n \\
 &\geq& \Big(1-\tfrac{\epsilon_1}{1+2\epsilon_1}\Big)\tfrac{1}{2\pi}\log(\tfrac{1}{\tau_n})\int_{|y|\leq l}\sing\Ks(y)e^{\displaystyle v_n(y)}\,dy + p_n \\
 &\geq&(1+\epsilon_1)\Big(1+{\frac{1}{\sqrt{\overline\sigma}}}\Big)(M_n+2\alpha_n\log|x_n|)+ p_n,
\end{eqnarray*}
for $n$ large enough. This, together with the fact that $p_n\geq\underset{\O}\inf u_n$, implies
\[
   \frac{1}{\sqrt{\overline\sigma}}M_n+\underset{\O}\inf u_n\leq -2\alpha_n\Big(1+{\tfrac{1}{\sqrt{\overline\sigma}}}\Big)\log|x_n|,
\]
immediately implying a contradiction to (\ref{assurdo}). So, $I_w=4\pi\big(1+\small{{\frac{1}{\sqrt{\overline\sigma}}}}\big)$ and we can use again Theorem 1.1 (\cite{CL}), to say that $w$ is a radially symmetric and strictly decreasing function with respect to $z_0\in\R^2$. In particular, by using the fact that $v_n$ is locally uniformly converging to $w$ and recalling that $0$ is a maximum point of $v_n$, we have that $z_0=0$.\hfill \\
\medskip
\\
Now we want to find $l_n\leq L_n$ such that we have
\begin{equation}\label{dis7}
 \int_{|y|\leq l_n}\sing \Ks(y)e^{\displaystyle v_n(y)}\,dy\geq 4\pi\Big(1+{\frac{1}{\sqrt{\sigma}}}\Big)-C_1|\log(\tau_n)|^{-1}
\end{equation}
and
\begin{equation}\label{dis8}
 \int_{|y|\leq l_n}\log\big(\tfrac{|y|}{\rho}\big)\sing\overline{K}_{n}(y)e^{\displaystyle v_n(y)}\,dy\leq C_2,
\end{equation}
for $C_1,C_2$ positive constants that do not depend on $n$, as $n\to+\infty$. \\
We define
\[
 l_n:=\sup\Big\{l\leq L_n\Big|\int_{|y|\leq l}\sing\overline{K}_{n}(y)e^{\displaystyle v_n(y)}\,dy\leq 4\pi\Big(1+{\tfrac{1}{\sqrt{\overline\sigma}}}\Big)\Big\}.
\]
From the fact that $I_w=4\pi\Big(1+{\frac{1}{\sqrt{\overline\sigma}}}\Big)$ and $v_n\to w$ uniformly on compact subsets, we deduce that $l_n\to+\infty$. We can choose $\epsilon$ as a positive number to be fixed later on, such that, for $n>>0$, there exist $r_{1,n}>1$ and $r_1>1$ such that $4r_{1,n}\leq \frac{l_n}{4}$ and
\[
 \int_{|y|\leq r_{1}}K_0e^{\dis w}\,dy=4\pi(1+\tfrac{1}{\sqrt{\overline\sigma}})-\epsilon,
\]
\[
 \int_{|y|\leq r_{1,n}}\sing\Ks e^{\displaystyle v_n}\,dy=4\pi(1+\tfrac{1}{\sqrt{\overline\sigma}})-\epsilon.
\]
We notice that $r_{1,n}\to r_1$, as $n\to+\infty$, and,
\begin{equation}\label{stimanello2}
\int_{\{r_{1,n}\leq|y|\leq l_n\}}\sing\Ks e^{\displaystyle v_n}\,dy\leq\epsilon.
\end{equation}

At this point, we need to split the proof into two subcases:
\begin{itemize}
 \item[(i)] it exists $\tilde\epsilon_0>0$ such that
 \begin{equation}\nonumber
 \tilde\epsilon_0 l_n\leq\tfrac{|x_n|}{\tau_n},
\end{equation}
\item [(ii)]
\begin{equation}\nonumber
 \tfrac{|x_n|}{\tau_n}=o(l_n).
\end{equation}
\end{itemize}

\subsection{Subcase (i)}
We skip the proof of this subcase because it can be worked out by a careful adaptation of the rearrangement argument used in \cite{CL}. For a detailed proof of this case, we refer the interested reader to \cite{cos}. Moreover, we point out that in the subcase (i) the ``$\sup+\inf$'' inequality \eqref{supinf} holds with the value $\sigma ={\overline\sigma}$.

\subsection{Subcase (ii)}
In this situation we have that
\begin{equation}
 \nonumber
 \tfrac{|x_n|}{\tau_n}\tfrac{1}{l_n}\to 0,\,\,\,\,\, \text{as}\,\, n\to+\infty.
\end{equation}
Therefore, let us choose a sequence $\Rns$ such that $\Rns\to\infty$ and \\$\Rns=o(\tfrac{|x_n|}{\tau_n})=o(l_n)$, \text{as} $ n\to+\infty$.
For later purposes, we choose $\Rns$ such that,
\begin{equation}
\label{Rnscondition}
 (\log(\overline{R}_n))^2=O(\log(\tfrac{|x_n|}{\tau_n})).
\end{equation}

We observe that $v_n$ is a solution of the inequality $-\D v_n\leq{b}\sing e^{\displaystyle v_n}$ in the set $\{|y|\leq L_n\}$, thus, we apply a generalization of a lemma by Suzuki (\cite{suz2}), whose details can be found in the appendix, section \ref{Suzuki}. \\
Let $r\leq\frac{1}{2}|x|$, $2r_{1,n}\leq |x|\leq \frac{l_n}{2}$ and, by using (\ref{stimanello2}), we choose an $\epsilon$ small enough such that
\begin{align}
 \label{Suzukiargument}
 v_n(x)&\leq\frac{1}{2\pi r}\int_{\p B_r(x)}v_n\,dl-2\log\Big(1-\tfrac{b}{2\beta_{\alpha,B_r(x)}}\int_{B_r(x)}\sing e^{\displaystyle v_n}\,dy\Big)_+ \\ \nonumber
 &\leq \frac{1}{2\pi r}\int_{\p B_r(x)}v_n\,dl+C\epsilon\\ \nonumber
 &=\frac{1}{\pi r^2}\int_{B_r(x)}v_n(y)\,dy+C\epsilon,
\end{align}
where $C$ is a positive constant and $\beta_{\alpha,B_r(x)}$ equals to $4\pi(1+\alpha_n)$ if $-\tfrac{x_n}{\tau_n}\in B_r(x)$ and equals to $4\pi$ if $-\tfrac{x_n}{\tau_n}\notin \overline{B_r}(x)$.\\
At this point, by fixing $r=\frac{1}{2}|x|$, if $y\in B_r(x)$, then $$\sing\geq (1+\tfrac{3}{2}\tfrac{\tau_n}{|x_n|}|x|)^{2\alpha_n}. $$Therefore, by using Jensen's inequality, we have that,
\begin{eqnarray*}
 \nonumber
 e^{\dis v_n(x)}&\leq&\frac{C^\epsilon}{\pi r^2}\int_{B_r(x)}e^{\displaystyle v_n(y)}\,dy \hspace{3cm} \\
 &\leq& \frac{C^\epsilon}{\pi r^2 {a}}\frac{1}{(1+\tfrac{3}{2}\tfrac{\tau_n}{|x_n|}|x|)^{2\alpha_n}}\int_{B_r(x)}\sing\Ks(y) e^{\displaystyle v_n(y)}\,dy \\
 &\leq& C_1C^\epsilon\epsilon|x|^{-2}(1+\tfrac{3}{2}\tfrac{\tau_n}{|x_n|}|x|)^{-2\alpha_n}.
\end{eqnarray*}
This implies that
\begin{equation}\label{stimaalto3}
 v_n(x)\leq -2\log|x|-{2\alpha_n}\log(1+\tfrac{3}{2}\tfrac{\tau_n}{|x_n|}|x|)+\log(C_1C^\epsilon\epsilon),
\end{equation}
for $2r_{1,n}\leq|x|\leq \tfrac{l_n}{2}$.
Let us define the function
\[
 \overline{v}_n(r)=\frac{1}{2\pi r}\int_{\{|y|=r\}}v_n(y)\,dl(y)=\frac{1}{2\pi }\int_{\{|x|=1\}}v_n(rx)\,dl(x).
\]
For $r_{1,n}\leq r\leq l_n$, we have that
\begin{eqnarray*}\nonumber
 \frac{d}{dr}\overline{v}_n(r)&=&\frac{1}{2\pi r}\int_{\{|y|\leq r\}}\D v_n(y)\,dy \\
 &=&-\frac{1}{2\pi r}\int_{\{|y|\leq r\}}\sing\Ks(y) e^{\displaystyle v_n(y)}\,dy \\
 &\geq&-2(1+\tfrac{1}{\sqrt{\overline\sigma}})\frac{1}{r}
\end{eqnarray*}
and this implies
\begin{equation}\label{stimab3}
 \overline{v}_n(r)\geq-2(1+\tfrac{1}{\sqrt{{\overline\sigma}}})\log r + C_2,
\end{equation}
for $r_{1,n}\leq r\leq l_n$ and a suitable constant $C_2$.\\
Now, if $r\in[4r_{1,n},\frac{l_n}{4}]$ and $\frac{r}{2}\leq|x|\leq 2r$, then by using (\ref{stimaalto3}), we have
\begin{eqnarray*}\nonumber
v_n(x)+2\log r+2\alpha_n\log(1+\tfrac{3}{2}\tfrac{\tau_n}{|x_n|}r)
\leq \log(C)+\log(\epsilon)<0,
\end{eqnarray*}
provided $\epsilon$ is small enough. Let $r_n=r\in[4r_{1,n},\frac{l_n}{4}]$ and let
\[
\tilde{v}_n(x)=v_n(rx)+2\log r+2\alpha_n\log(1+\tfrac{3}{2}\tfrac{\tau_n}{|x_n|}r), \qquad\text{in}\,\, \tfrac{1}{2}\leq|x|\leq 2,
 \]
then, $\tilde{v}_n(x)\leq \log(C)+\log(\epsilon)$ and satisfies
\[-\D \tilde{v}_n(x)=\Big(\tfrac{\big|\tfrac{\tau_n}{|x_n|}rx+\tfrac{x_n}{|x_n|}\big|}{\big(1+\tfrac{3\tau_n}{2|x_n|}r\big)}\Big)^{2\alpha_n}\Ks(rx)e^{\dis\tilde{v}_n(x)}=:f_n(x),\qquad\text{in}\,\, \tfrac{1}{2}\leq |x|\leq 2.
\]
Let us analyze the following Dirichlet problem
\[
\left\{ \begin{array}{cc}
-\D w_n= f_n &  \text{in} \, B_2\backslash B_{1\diagup2},\\
    w_n=0 &\text{on} \,\, \p(B_2\backslash B_{1\diagup2}),
\end{array}
\right.
\]
then, by noticing that $f_n\geq0$ and by using the weak maximum principle, we deduce that $w_n\geq0$ in $\overline{B_2\backslash B_{1\diagup2}}$. Morever, by standard elliptic theory (\cite{GT}), we deduce that $\|w_n\|_{L^\infty}\leq \tilde{C}\epsilon$.
Indeed, let us define \[z_n=\tfrac{\big(\tfrac{\tau_n}{|x_n|}rx+\tfrac{x_n}{|x_n|}\big)}{\big(1+\tfrac{3\tau_n}{2|x_n|}r\big)}.\]
If $r\frac{\tau_n}{|x_n|}\leq\frac{1}{3}$ or $r\frac{\tau_n}{|x_n|}\geq 3$, then $|z_n|\geq c>0$, so that $\|f_n\|_{L^\infty(B_2\backslash B_{1\diagup2})}\leq {C}\epsilon$ and by standard elliptic estimates (\cite{GT}), we have that $\|w_n\|_{L^\infty}\leq \tilde{C}\epsilon$. \\ Otherwise, if $\frac{1}{3}<r\frac{\tau_n}{|x_n|}<3$, then we split the Dirichlet problem above in the following two problems:
\[
\left\{ \begin{array}{cc}
&-\D w_{1,n}= f_n\chi_{B_\delta(-\frac{1}{r}\frac{x_n}{\tau_n})}=g_n  \qquad \text{in} \,\, B_2\backslash B_{1\diagup2},\\
      &w_{1,n}=0 \qquad\qquad\qquad\qquad \text{on} \,\, \p(B_2\backslash B_{1\diagup2}),
\end{array}
\right.
\]
and
\[
\left\{ \begin{array}{cc}
&-\D w_{2,n}= f_n\chi_{B^c_\delta(-\frac{1}{r}\frac{x_n}{\tau_n})}=d_n  \qquad \text{in} \,\, B_2\backslash B_{1\diagup2},\\
      &w_{2,n}=0 \qquad\qquad\qquad\qquad \text{on} \,\, \p(B_2\backslash B_{1\diagup2}),
\end{array}
\right.
\]
for a certain $\delta>0$. We notice that, for $n$ large enough, we can always choose $q>1$ such that $\alpha_n q+1\geq d_\infty>0$. Thus,
\[
 \|g_n\|^q_{L^q(B_2\backslash B_{1\diagup2})}=\int_{B_2\backslash B_{1\diagup2}}f_n^{q}\,\chi_{B_\delta(-\frac{x_n}{r\tau_n})}=\int_{B_\delta(-\frac{|x_n|}{r\tau_n})\cap (B_2\backslash B_{1\diagup2})}f_n^q\leq C\delta^{2\alpha_n q+2}\epsilon^{q}.
\]
By standard elliptic estimates (\cite{GT}), $\|w_{1,n}\|_{L^\infty(B_2\backslash B_{1\diagup2})}\leq C_1\epsilon^{q}$, for a certain positive constant $C_1$. \\ On the other hand, $\|d_n\|_{L^\infty(B_2\backslash B_{1\diagup2})}= \|f_n\|_{L^\infty(B^c_\delta(-\frac{1}{r}\frac{x_n}{\tau_n})\cap (B_2\backslash B_{1\diagup2}))}\leq C\delta^{2\alpha_n}\epsilon$, then by standard elliptic estimates (\cite{GT}), $\|w_{2,n}\|_{L^\infty(B_2\backslash B_{1\diagup2})}\leq C_2\epsilon$, for a certain positive constant $C_2$.
Hence, also in this case, we have $\|w_n\|_{L^\infty}\leq \tilde{C}\epsilon$. \\

Thus, the function $h_n=w_n-\tilde{v}_n$ is harmonic and $\underset{\partial B_1}{\inf} h_n\geq 0$, whence, by using the Harnack principle, there exists $\gamma_0\in(0,1)$ such that,
\[
 \gamma_0\,\underset{\partial B_1}{\sup} \, h_n\leq \underset{\partial B_1}{\inf} h_n.
\]
Since $-\tilde{v}_n\leq h_n\leq \tilde{C}\epsilon-\tilde{v}_n$, we have that
\[
 \underset{\partial B_1}{\sup} \, (-\tilde{v}_n) \leq \gamma_0^{-1}\underset{\partial B_1}{\inf} (\tilde{C}\epsilon-\tilde{v}_n).
\]
At this point, for $|x|=1$, we deduce that
\begin{align*}
 -\tilde v_n(x)&\leq -\tfrac{\gamma_0^{-1}}{2\pi}\int_{|x|=1}\tilde v_n(x)\, dl + \tilde C\gamma_0^{-1}\epsilon \\
 &\leq -\tfrac{\gamma_0^{-1}}{2\pi}\int_{|x|=1} v_n(rx)\, dl -2\gamma_0^{-1}\log r-2\gamma_0^{-1}\alpha_n\log( 1+\tfrac{3}{2}\tfrac{\tau_n}{|x_n|}r) + \tilde C\gamma_0^{-1}\epsilon \\
 &= -\gamma_0^{-1}\overline v_n(r) -2\gamma_0^{-1}\log r-2\gamma_0^{-1}\alpha_n\log( 1+\tfrac{3}{2}\tfrac{\tau_n}{|x_n|}r) + \tilde C\gamma_0^{-1}\epsilon \\
 &\leq 2\gamma_0^{-1}(1+\tfrac{1}{\sqrt{{\overline\sigma}}})\log r - \gamma_0^{-1}C_2 \\
 &\hspace{2cm}-2\gamma_0^{-1}\log r-2\gamma_0^{-1}\alpha_n\log( 1+\tfrac{3}{2}\tfrac{\tau_n}{|x_n|}r) + \tilde C\gamma_0^{-1}\epsilon \\
 &= 2\tfrac{\gamma_0^{-1}}{\sqrt{{\overline\sigma}}}\log r -2\gamma_0^{-1}\alpha_n\log( 1+\tfrac{3}{2}\tfrac{\tau_n}{|x_n|}r) - \gamma_0^{-1}C_2 +\tilde C\gamma_0^{-1}\epsilon,
\end{align*}
where we used (\ref{stimab3}). Therefore, if $|x|=1$ and $4r_{1,n}\leq r \leq \tfrac{l_n}{4}$, we see that,
\[
 v_n(rx)\geq -2(1+\tfrac{\gamma_0^{-1}}{\sqrt{\overline\sigma}})\log r - 2\alpha_n(1-\gamma_0^{-1})\log ( 1+\tfrac{3}{2}\tfrac{\tau_n}{|x_n|}r) +\gamma_0^{-1}C_2 -\tilde C\gamma_0^{-1}\epsilon
\]
and we can find a suitable constant $C_3$ such that
\[
 v_n(rx)\geq -2(1+\tfrac{\gamma_0^{-1}}{\sqrt{\overline\sigma}})\log r - 2\alpha_n(1-\gamma_0^{-1})\log ( 1+\tfrac{3}{2}\tfrac{\tau_n}{|x_n|}r)+C_3,
\]
which is equivalent to say that, for $4r_{1,n}\leq |z|\leq \frac{l_n}{4}$,
\begin{equation}\label{stimabasso3}
v_n(z)\geq -2(1+\tfrac{\gamma_0^{-1}}{\sqrt{\overline\sigma}})\log |z| - 2\alpha_n(1-\gamma_0^{-1})\log ( 1+\tfrac{3}{2}\tfrac{\tau_n}{|x_n |}|z|)+C_3.
\end{equation}
Let us set $\delta:=(1+\tfrac{\gamma_0^{-1}}{\sqrt{\overline\sigma}})^{-1}$ (we notice that $\delta<1)$ and choose $R$ such that $1\leq R\leq \frac{|x_n|}{4\tau_n}$, then $1\leq R^{\delta}\leq\frac{|x_n|}{4\tau_n}$. \\
If $4r_{1,n}\leq R^\delta=|x|\leq\frac{|x_n|}{4\tau_n}$, then,  by using (\ref{stimaalto3}) and (\ref{stimabasso3}) we conclude that, for $R\leq|y|\leq \frac{|x_n|}{\tau_n}$,
\begin{eqnarray*} \nonumber
 v_n(x)&\geq& -2\log R - 2\alpha_n(1-\gamma_0^{-1})\log ( 1+\tfrac{3}{2}\tfrac{\tau_n}{|x_n|}R^\delta) +C_3 \\ \nonumber
 &\geq& -2\log |y| - 2\alpha_n(1-\gamma_0^{-1})\log ( 1+\tfrac{3}{2}\tfrac{\tau_n}{|x_n|}|y|) +C_3 \\ \nonumber
 &\geq& -2\log |y| - 2\alpha_n\log ( 1+\tfrac{3}{2}\tfrac{\tau_n}{|x_n|}|y|) +\log (C_1C^\epsilon \epsilon)  \\
 &\geq& v_n(y),
\end{eqnarray*}
where we used the fact that $2\alpha_n\gamma_0^{-1}\log ( 1+\tfrac{3}{2}\tfrac{\tau_n}{|x_n|}|y|)\geq 2\alpha_n\gamma_0^{-1}\log (\tfrac{5}{2})$, for every $R\leq|y|\leq\frac{|x_n|}{\tau_n}$ and we choose $\epsilon<<1$ such that $\log(C_1C^\epsilon\epsilon)\leq C_3 + 2\alpha_n\gamma_0^{-1}\log (\tfrac{5}{2})$. \\
By taking the supremum over all $R\leq|y|\leq \tfrac{|x_n|}{\tau_n}$, we have proved that
\begin{equation}
 v_n(x)\geq\underset{\{R\leq|y|\leq \frac{|x_n|}{\tau_n}\}}{\sup} v_n(y),\label{11dis3}
\end{equation}
for every $4r_{1,n}\leq R^\delta= |x|\leq\frac{|x_n|}{4\tau_n}$.

Moreover, if $ 1\leq R^\delta=|x|\leq4r_{1,n}$, we also have
\begin{equation}\label{2dis}
 v_n(x)\geq \underset{\{R\leq|y|\leq \frac{|x_n|}{\tau_n}\}}{\sup} v_n(y),
\end{equation}
Indeed, 
\[
 v_n(x)\geq  \underset{\{R\leq|y|\leq T\}}{\sup} v_n(y)
\]
for a fixed $T$, with $T^\delta\geq\max\{R,4r_{1,n}\}$, otherwise the fact that $v_n$ converges uniformly on compact sets to $w$, which is a strictly decreasing function with respect to $0$, provides a contradiction. Moreover,
\[
  \underset{\{R\leq|y|\leq T\}}{\sup} v_n(y)\geq  \underset{\{R\leq|z|\leq \frac{|x_n|}{\tau_n}\}}{\sup} v_n(z)
\]
If it is not the case, by choosing $\overline z$ such that $|\overline{z}|=T^\delta$ and by using \eqref{11dis3}, we conclude that
\[
 v_n(\overline z)\leq \underset{\{R\leq|y|\leq T\}}{\sup} v_n(y) < \underset{\{R\leq|z|\leq \frac{|x_n|}{\tau_n}\}}{\sup} v_n(z)= \underset{\{T\leq|z|\leq \frac{|x_n|}{\tau_n}\}}{\sup} v_n(z)\leq v_n(\overline z)
\]
which is a contradiction.

Hence we have the following decay estimate: for every $1\leq|x|\leq \frac{|x_n|}{4\tau_n}$
\begin{equation}\label{1dis3}
 v_n(x)\geq \underset{\{\sqrt[\delta]{|x|}\leq|y|\leq\frac{|x_n|}{\tau_n}\}}{\sup} v_n(y).
\end{equation}
\\
Let us set,
\[
m_n(r):=\underset{\{|y|=r\}}{\max} v_n(y), \qquad\qquad t_{0}:=m_n(\tfrac{|x_n|}{2\tau_n}).
\]
Now, recalling that $\underset{|y|\leq l_n/2}\max \, v_n(y)=v_n(0)=0$, let us define
\[
 d\mu_n:=\singx dx, \qquad \qquad d\sigma_n:=(\singx)^{\frac{1}{2}}dl,
\]
\[
 \O_{t}^n:=\{y\big| |y|\leq \tfrac{|x_n|}{2\tau_n} \,\,\text{and} \,\, v_n(y)>t \},\qquad \qquad \xi_n(t):=\int_{\Otn}d\mu_n =\mu_n(\Otn),
\]
for any $t\in(t_{0},0)$. 
It is easy to see that $\overline{\O_t^n}\subseteq\{|z|\leq \tfrac{|x_n|}{2\tau_n} \}$ and, also that $\underset{t\to 0^-}             {\lim}\xi_n(t)=0
$ and $\underset{t\to t_{0}^+}{\lim}\xi_n(t)=\int_{{\O^n_{t_{0}}}}d\mu_n=:\xi_n(t_{0})$ . Since $v_n\in W_{loc}^{2,p}(\R^2)$ for $p>2$, as a conseguence of the Generalized Sard's Lemma (see (\cite{F})), $\p\Otn$ is a $C^1$ closed curve for a.a $t\in(\ton,0)$ and since $v_n$ satisfies (\ref{vequation2}), $\p\Otn$ has null measure for a.a $t\in(\ton,0)$. Actually, it turns out that the level sets of $v_n$ have null measure for every $t\in(\ton,0)$, see Lemma \ref{levelsetlemma} in the 
appendix. This easily implies that $\xi_n
$ is a continuous, strictly decreasing function, which is almost everywhere differentiable in $t\in(\ton,0)$. Indeed, by using the Coarea formula (see (\cite{BW})), we have that, 
$$\tfrac{d\xi_n}{dt}(t)=-\int_{\p \Otn}\singx\tfrac{1}{|\nabla v_n|}d\sigma
$$
for almost any $t\in(\ton,0)$. \\
We introduce $\vns(|x|)$, the weighted symmetric decreasing rearrangement of $v_n$ with respect to the measure $d\mu_n$, defined in the following way. Fixed $r=|x|$, then
\begin{eqnarray*}
 \vns(r)&:=&\sup\{t\in (\ton,0): \xi_n(t)>\pi r^2\} \\
 &=& \sup\{t\in (\ton,0): s_n(t)>r\},
\end{eqnarray*}
where $s_n(t)=(\pi^{-1}\xi_n(t))^{\tfrac{1}{2}}$ and this implies that
\[
 (\O_t^{n})^\ast:=\{y\big| |y|\leq \tfrac{|x_n|}{2\tau_n} \,\,\text{and} \,\, \vns(y)>t \}=B_{s(t)}(0).
\]
Firstly, we notice that $s_n(t)\leq (\tfrac{1}{2})^{\alpha_n}\tfrac{|x_n|}{2\tau_n}$ for all $t\in(\ton,0)$, see \eqref{raggibuoni2}. \\
Moreover, the function $\vns:(0,s_n(t_{0}))\to (\ton,0)$ satisfies 
\[
 \underset{r\to s(\ton)^-}{\lim}\vns(r)=\ton, \qquad
  \qquad \underset{r\to 0^+}{\lim}\vns(r)=0.
\]

In particular $\vns$ is continuous and strictly decreasing. Also, we can conclude that $\vns$ is locally Lipschitz in $(0,s_n(\ton))$. Indeed, $\vns(r)=\eta_n^{\star}(\pi r^2)$, where $\eta_n^{\star}(r):=\sup\{t\in (\ton,0): \xi_n(t)>r\}$ is the continuous inverse of $\xi_n$. We notice that $\eta_n^{\star}(r)$ is locally Lipschitz continuous (see Lemma 4.1 in (\cite{b})). Thus, for a fixed $\bar{r}\in(0,s_n(\ton))$ and a small neighborhood $I_{\bar{r}}$ of $\bar{r}$ we have that, for every $r\in I_{\bar{r}}$,
\begin{eqnarray}
\label{proofvnr}
 |\vns(r)-\vns(\bar{r})|=|\eta_n^{\star}(\pi r^2)-\eta_n^{\star}(\pi \bar{r}^2)|\leq c|\pi r^2-\pi s_0^2|\leq C|r-s_0|,
\end{eqnarray}
where $C$ is a positive constant that depends on $\bar{r}$.  Thus $\vns(r)$ is differentiable almost everywhere in $(0,s_n(\ton))$. \\
Let us define
\begin{equation} \label{Fn3}
 F_n(r):=\int_{\Ovn} \Ks(y)e^{\displaystyle v_n(y)}\,d\mu_n,
\end{equation}
which is a locally Lipschitz function in $(0,s_n(\ton))$. Infact, by taking $\tilde r\in(0,s_n(\ton))$ and a small neighborhood of $\tilde r$, say $I_{\tilde r}$, then for every $r\in I_{\tilde r}$, we have that,
\[
 |F_n(r)-F_n(\tilde r)|\leq\Big|\underset{{\Ovn}\backslash\O_{v_{n}^{\ast}(\tilde r)}^n}\int \Ks(y)e^{\displaystyle v_n(y)}d\mu_n(y) \Big|\leq bC|\xi_n(\vns(r))-\xi_n({v_n^{\ast}(\tilde r)})|
 \]
 \begin{eqnarray}
 \label{proofFnr}
 \leq C|\xi_n(\eta_n^\star(\pi r^2))-\xi_n({\eta_n^{\star}(\pi \tilde r^2)})|=C|r^2-\tilde r^2|
 \leq C|r-\tilde r|,
\end{eqnarray}
where $C$ is a positive constant that depends on $\tilde{r}$ and we used the fact that $\xi_n$ is the inverse of $\eta_n^{\star}$. Thus $F_n$ is almost everywhere differentiable in $(0,s_n(\ton))$.\\
Now, from (\ref{1dis3}), the following inclusions hold: 
\begin{equation}
 \label{inclusion3}
 B_{R^{\delta^2}}(0)\subseteq \O_{m_n(R^\delta)}^n \subseteq B_{R}(0).
\end{equation}
for $1\leq R\leq\frac{|x_n|}{4\tau_n}$. \\
For the first inclusion, if we take $x\in B_{R^{\delta^2}}(0)$ and set $S=R^{\delta}$, \\then $1\leq S^{\delta}\leq \frac{|x_n|}{4\tau_n}$ and we can apply (\ref{1dis3}). Namely, for $x\in B_{S^{\delta}}(0)$, we have that,
\[
 v_n(x)>\underset{|z|=S^{\delta}}\min v_n(z)\geq  \underset{\{S\leq|y|\leq \frac{|x_n|}{\tau_n}\}}{\sup} v_n(y)\geq m_n(S),
\]
where the first inequality follows from the strong minimum principle. This implies that $x\in \O_{m_n(R^\delta)}^n$. \\
For the second inclusion, if $z\in\O_{m_n(R^\delta)}^n $, then, by using again \eqref{1dis3}, we have that,
\[
 v_n(z)>\underset{|y|=R^\delta}{\max}v_n(y)\geq \underset{R\leq|y|\leq \frac{|x_n|}{\tau_n}}{\max}v_n(y).
\]
At this point, if $z\notin B_R(0)$, then
\[
\underset{R\leq|y|\leq \frac{|x_n|}{\tau_n}}{\max}v_n(y)\geq v_n(z),\]
which is a contradiction. This implies that $z\in B_R(0)$. \\

We recall that, from \eqref{raggibuoni2}, we have that
\[
 (\tfrac{3}{2})^{2\alpha_n}\pi r^2\leq \mu_n(B_r(0))\leq (\tfrac{1}{2})^{2\alpha_n}\pi r^{2}.
\]
for $r\leq \tfrac{|x_n|}{4\tau_n}$. Thus, from (\ref{inclusion3}), we have that, for $1\leq R\leq \frac{|x_n|}{4\tau_n}$,
\[
 (\tfrac{3}{2})^{2\alpha_n}\pi R^{2\delta^2}\leq\mu_n(B_{R^{\delta^2}})\leq \mu_n(\O^n_{m_n(R^\delta)})\leq \mu_n(B_R(0))\leq(\tfrac{1}{2})^{2\alpha_n}\pi R^{2}.
\]
which, by noticing that $\mu_n(\O^n_{m_n(R^\delta)})=\xi_n(m_n(R^\delta))$, implies that
\begin{equation}
 \label{raggigenerali3}
 (\tfrac{3}{2})^{\alpha_n} R^{\delta^2}\leq \Big(\pi^{-1}\xi_n\big(m_n(R^\delta)\big)\Big)^{\frac{1}{2}}\leq (\tfrac{1}{2})^{\alpha_n} R
 \end{equation}
and, by using again (\ref{inclusion3}) and the fact that $\vns$ is strictly decreasing, then for $1\leq R\leq\frac{|x_n|}{4\tau_n}$,
\begin{equation}
 \label{integrali3}
 \int_{B_R(0)}\Ks(y)e^{\displaystyle v_n(y)}\, d\mu_n\geq F_n\Big(\big(\pi^{-1}\xi_n\big(m_n(R^\delta)\big)\big)^\frac{1}{2}\Big)\geq F_n\Big((\tfrac{3}{2})^{\alpha_n} R^{\delta^2}\Big).
\end{equation}
\\
Hence, by fixing $\hat{R}_n=\Big(\pi^{-1}\xi_n\big(m_n\big((\tfrac{|x_n|}{4\tau_n})^{\frac{\delta}{2}}\big)\big)\Big)^\frac{1}{2}$ and using (\ref{raggigenerali3}) with \\ $R=(\tfrac{|x_n|}{4\tau_n})^{\frac{1}{2}}$, we notice that
\begin{equation}
\label{raggixntn}
  (\tfrac{3}{2})^{\alpha_n} (\tfrac{|x_n|}{4\tau_n})^{\frac{\delta^2}{2}}\leq\hat{R}_n\leq (\tfrac{1}{2 })^{\alpha_n}(\tfrac{|x_n|}{4\tau_n})^{\frac{1}{2}}.
\end{equation}
Obviously, $\hat R_n\to +\infty$, as $n\to+\infty$ and $\vns(r)$ is defined for all \\$r\leq \hat R_n<s(t_0)$. Moreover, by using (\ref{1dis3}), we have that,
\[
 \vns(\hat{R}_n)=m_n((\tfrac{|x_n|}{4\tau_n})^{\frac{\delta}{2}})\geq\underset{(\frac{|x_n|}{4\tau_n})^{\frac{1}{2}}\leq|x|\leq\frac{|x_n|}{\tau_n}}\max  v_n(x)\geq v_n(-\tfrac{x_n}{\tau_n}).
\]
On the other hand, let us define
\[
 s_{1,n}:=\inf\{\bar r\in (0,s_n(t_0))| -\tfrac{x_n}{\tau_n}\in\O_{\vns(r)}\,\,\forall r>\bar r\}.
\]
Thus, we deduce that $\hat{R}_n\leq s_{1,n}$, which implies, from the previous calculations,
\begin{equation}\label{s1estimate}
 s_{1,n}\geq(\tfrac{3}{2})^{\alpha_n} (\tfrac{|x_n|}{4\tau_n})^{\tfrac{\delta^2}{2}}
\end{equation}
and, of course, $s_{1,n}\to +\infty$, when $n\to+\infty$. \\

Let us define, for a.a $r\in(0,\hat R_n),$
\[
 \hat{K}_n(r):=\frac{F'_n(r)}{2\pi re^{\dis\vns(r)}}
\]
and
\[
 {a}_n:=\underset{\{|y|\leq (\frac{l_n}{4})^{\frac{1}{2}}\}}{ess.\inf} \Ks(y), \qquad {b}_n:=\underset{\{|y|\leq (\frac{l_n}{4})^{\frac{1}{2}}\}}{ess.\sup} \Ks(y).
\]
Then, by using the fact that $\xi_n(\vns)=|B_r|$, we have that for almost any $r\in (0,\hat R_n)$,
\begin{eqnarray*}
 2\pi ra_ne^{\dis\vns(r)}&\leq&\underset{h\to 0^+}\lim \frac{a_ne^{\dis\vns(r+h)}}{h}\int_{\O_{v_n^{\ast}(r+h)}\backslash\Ovn}\,d\mu_n(y) \\
 &\leq& \underset{h\to 0^+}\lim\frac{1}{h} \int_{\O_{v_n^{\ast}(r+h)}\backslash\Ovn}\Ks e^{\displaystyle v_n}\,d\mu_n(y)= F'_n(r) \\
 &\leq&\underset{h\to 0^+}\lim \frac{b_ne^{\dis\vns(r)}}{h}\int_{\O_{v_n^{\ast}(r+h)}\backslash\Ovn}\,d\mu_n(y) \\
 &\leq& 2\pi rb_ne^{\dis\vns(r)},
\end{eqnarray*}
that is, for almost any $r\in (0,\hat R_n)$
\begin{equation}
 \label{Ka3}
 a_n\leq \hat{K}_n(r)\leq b_n.
\end{equation}

Let $I_1$ be the set of those $r\in(0,\hat R_n)$ where $\frac{d}{dr}\vns(r)$ does not exist and let $I_2$ be the set of those
$r\in(0,\hat R_n)$ where $\frac{d}{dr}\vns(r)=0$. If we denote $E=\vns(I_1\cup I_2)$, then $\mathcal{H}^1(E)=0$, since $\vns$ is locally Lipschitz. Furthermore, let $I_c=(\vns)^{-1}(E_c)$, where $E_c$ is the set of critical values of $v_n$. By the Generalized Sard's Lemma (\cite{F}), we have that $\mathcal{H}^1(E_c)=0$. Now, let $I$ be the set of those $r$ such that $\vns(r)=t$ for some $t\in (t_0,0)\backslash\{E\cup E_c\}$. \\
Hence, for any $r\in I$, we can apply the Cauchy-Schwartz inequality to deduce
\begin{eqnarray*}
 \nonumber
 \Big(\int_{\partial \O^n_{\vns (r)}}\,d\sigma_n\Big)^2&\leq& \Big(\int_{\partial \O^n_{\vns (r)}}\singx\frac{1}{|\nabla v_n|}\,dl\Big)\Big(\int_{\partial \O^n_{\vns (r)}}|\nabla v_n|\,dl\Big)\\
 &\leq& \Big(-\frac{d\vns(r)}{dr}\Big)^{-1}(2\pi r)\Big(\int_{\partial \O^n_{\vns (r)}}-\frac{\partial v_n}{\partial \nu}\,dl\Big),
\end{eqnarray*}
where we used the fact that $\vns(r)=\eta^{\star}_n(\pi r^2)$, $\eta^{\star}_n$ is the inverse of $\xi_n$ and $\nu$ is the exterior normal to $\partial \O^n_{\vns (r)}$. Moreover, we have that
\[
 \int_{\partial \O^n_{\vns (r)}}|\nabla v_n|\,dl=\int_{ \O^n_{\vns (r)}}\Ks e^{\displaystyle v_n}\,d\mu_n=F_n(r),
\]
which implies, for every $s\in I$
\begin{equation}
 \label{disdiff13}
 2\pi r F_n(r)\geq \Big(\int_{\partial \O^n_{\vns (r)}}\,d\sigma_n\Big)^2\Big(-\frac{d\vns(r)}{dr}\Big).
\end{equation}
Since $v_n$ is superharmonic and by using the maximum principle we can conclude that each connected component of $\O^n_{\vns (r)}$ is simply connected. Indeed, let us assume $\O^n_{\vns (r)}$ is multiply connected and, for simplicity, $\O^n_{t}=\tilde{\O}\backslash\O^n_{0,t}$, with $t=\vns(r)$, for a certain set $\tilde{\O}$. We can assume without loss of generality that $\O^n_{0,t}$ is connected. Then, $\partial\O^n_{0,t}\subset\{v_n=t\}$ and by superharmonicity of $v_n$ we have
\[
 \underset{\overline{\O}^n_{0,t}}{\min}\,v_n=\underset{\partial\O^n_{0,t}}{\min}\,v_n\geq \underset{\{v_n=t\}}{\min}\,v_n=t,
\]
that is, $v_n\geq t$ in $\O^n_{0,t}$ which is a contradiction. This implies that, if $\O^n_{\vns (r)}$ is smooth, then each connected component of $\O^n_{\vns (r)}$ is simply connected. \\
Hence, after a traslation and a rescaling, we apply Huber inequality (see Proposition \ref{Huber.App.prop} in the appendix) and we conclude that for $r\in I\subset (0,\hat R_n)$,
\[
 \Big(\int_{\partial \O^n_{\vns (r)}}\,d\sigma_n\Big)^2\geq \beta_{\alpha_n,r}\xi_n(\vns(r)),
\]
where $\beta_{\alpha_n,r}=\beta_{\alpha_n,\O^n_{\vns (r)}}$ is equal to $4\pi(1+\alpha_n)$ if $-\frac{{x_n}}{\tau_n}\in\O^n_{\vns (r)}$ or $4\pi$ if \\$-\frac{{x_n}}{\tau_n}\notin\overline{\O^n_{\vns (r)}}$.
However, we have shown that $\hat R_n\leq s_{1,n}$, then $\beta_{\alpha_n,r}=4\pi$ for every $r\in I$.  \\
By using this inequality in (\ref{disdiff13}), we conclude that
\begin{eqnarray}
 F_n(r)&\geq& 4\pi\xi_n(\vns(r))\Big(-\frac{d\vns(r)}{dr}\Big) \frac{1}{2\pi r } \nonumber \\
 &=& 2\pi r\Big(-\frac{d\vns(r)}{dr}\Big)\label{diff4},
\end{eqnarray}
for every $r\in I$. The last inequality is always true for $r\in I_2$, so it holds for $r\in I\cup I_2$. If we consider $I_3$ the set of those $r$ such that (\ref{diff4}) does not hold, then $I_3\subset (I_1\cup I_c)\backslash I_2$. On the other hand, since $\mathcal{H}^1(I_1)=0$ and $\mathcal{H}^1(E_c)=0$, there is no possibility that $I_3$ does have positive measure. This means that (\ref{diff4}) holds for a.a. $r\in(0,\hat R_n)$. \\
At this point, from (\ref{diff4}), we deduce that for almost any $r\in (0,\hat R_n)$,
\begin{align}\nonumber
 \frac{d}{dr}\Big(  \frac{rF'_n(r)}{\hat{K}_n(r)}\Big)=\frac{d}{dr}(2\pi r^2e^{\dis\vns(r)})&=4\pi r e^{\dis\vns(r)}+2\pi r^2e^{\dis\vns(r)}\frac{d}{dr}\vns(r) \\ \nonumber
 &\geq \frac{2F'_n(r)}{\hat{K}_n(r)}-\frac{F'_n(r)}{\hat{K}_n(r)}\frac{2}{4\pi}F_n(r) \\ \nonumber
 &= \frac{2F'_n(r)}{\hat{K}_n(r)}\Big(1-\frac{F_n(r)}{4\pi}\Big) \\
 &\geq \begin{cases}
      \frac{2F'_n(r)}{a_n}\Big(1-\frac{F_n(r)}{4\pi}\Big) \qquad \text{if}\, F_n(r)>4\pi, \\
      \frac{2F'_n(r)}{b_n}\Big(1-\frac{F_n(r)}{4\pi}\Big) \qquad \text{if}\, F_n(r)\leq4\pi.\label{diffeq231}
     \end{cases}
\end{align}

At this point, from the inclusions (\ref{inclusion3}) and taking $R=(\frac{|x_n|}{4\tau_n})^\frac{1}{2}$, we deduce that,
\begin{align}
 \int_{B_{R^{\delta^2}}(0)}\Ks e^{\displaystyle v_n}\,d\mu_n\leq \int_{\O^n_{m_n(R^\delta)}}\Ks e^{\displaystyle v_n}\,d\mu_n=F_n(\hat R_n)\leq\int_{B_R(0)}\Ks e^{\displaystyle v_n}\,d\mu_n. 
 \label{s0esiste}
\end{align}
This implies that $F_n(\hat R_n)\to 4\pi(1+\frac{1}{\sqrt{{\overline\sigma}}})$, as $n\to+\infty$. Hence, by the continuity and monotonicity of $F_n$, there exists, for $n$ sufficiently large, $s_0=s_{0,n}<\hat R_n$ such that $F_n(s_0)=4\pi$.

Now, we integrate (\ref{diffeq231}) for every $r\in (s_0,\hat R_n)$ and obtain

\begin{align}
 \label{secondcase2} \nonumber
 \frac{rF'_n(r)}{\hat{K}_n(r)}&\geq \frac{2}{b_n}\int_0^{s_0}F'_n\Big(1-\frac{F_n}{4\pi}\Big)ds+\frac{2}{a_n}\int_{s_0}^{r}F'_n\Big(1-\frac{F_n}{4\pi}\Big)ds \\ \nonumber
 &= \tfrac{2}{b_n}\big(F_n(s_0)-\tfrac{F_n(s_0)^2}{8\pi}\big)+\tfrac{2}{a_n}\big(F_n(r)-\tfrac{F_n(r)^2}{8\pi}- F_n(s_0)+\tfrac{F_n(s_0)^2}{8\pi}\big) \\ \nonumber
 &= -\tfrac{1}{4\pi a_n}F_n(r)^2+\tfrac{2}{a_n}F_n(r)-\tfrac{4\pi}{a_n}(1-\tfrac{a_n}{b_n}) \\  \nonumber
 &= -\frac{1}{4\pi a_n}\Big(F_n(r)-4\pi(1-\sqrt{\tfrac{a_n}{b_n}})\Big)\Big(F_n(r)-4\pi(1+\sqrt{\tfrac{a_n}{b_n}})\Big),
\end{align}
where we used the fact that $F_n(s_0)=4\pi$. Hence we deduce the following inequality
\begin{equation}
 \label{diffeq23}
 \frac{rF'_n(r)}{\hat{K}_n(r)}\geq -\frac{1}{4\pi a_n}\Big(F_n(r)-4\pi(1-\sqrt{\tfrac{a_n}{b_n}})\Big)\Big(F_n(r)-4\pi(1+\sqrt{\tfrac{a_n}{b_n}})\Big).
\end{equation}

Once we have established (\ref{diffeq23}) for every $s_0< r < \hat R_n$, we define
\[
 \tilde{R}_n:=\sup\Big\{r\leq \hat R_n\big| F_n(r)\leq 4\pi\left(1+\sqrt{\tfrac{a_n}{b_n}}\,\right)\Big\}.
\]

Then, from (\ref{diffeq23}), it follows that
\begin{equation}
 \label{dd2}
 \frac{F'_n(r)}{F_n(r)-4\pi(1-\sqrt{\tfrac{a_n}{b_n}})}+\frac{F'_n(r)}{4\pi(1+\sqrt{\tfrac{a_n}{b_n}})-F_n(r)}\geq 2\sqrt{\frac{a_n}{b_n}}\frac{1}{r},
\end{equation}
for $s_0\leq r \leq \tilde{R}_n$.
By integrating the previous inequality,
\[
 \int_{s_0}^r\frac{F'_n(r)}{F_n(r)-4\pi(1-\sqrt{\tfrac{a_n}{b_n}})}+\int_{s_0}^r\frac{F'_n(r)}{4\pi(1+\sqrt{\tfrac{a_n}{b_n}})-F_n(r)}\geq 2\sqrt{\frac{a_n}{b_n}}\log(\tfrac{r}{s_0})
\]
\[
 \log\Bigg(\bigg(\tfrac{F_n(r)-4\pi(1-\sqrt{\tfrac{a_n}{b_n}})}{F_n(s_0)-4\pi(1-\sqrt{\tfrac{a_n}{b_n}})}\bigg)\bigg(\tfrac{4\pi(1+\sqrt{\tfrac{a_n}{b_n}})-F_n(s_0)}{4\pi(1+\sqrt{\tfrac{a_n}{b_n}})-F_n(r)}\bigg)\Bigg)\geq 2\sqrt{\frac{a_n}{b_n}}\log(\tfrac{r}{s_0})
\]
\[
 \log\bigg(\tfrac{F_n(r)-4\pi(1-\sqrt{\tfrac{a_n}{b_n}})}{4\pi(1+\sqrt{\tfrac{a_n}{b_n}})-F_n(r)}\bigg)+\log\bigg(\tfrac{4\pi(1+\sqrt{\tfrac{a_n}{b_n}})-F_n(s_0)}{F_n(s_0)-4\pi(1-\sqrt{\tfrac{a_n}{b_n}})}\bigg)\geq 2\sqrt{\frac{a_n}{b_n}}\log(\tfrac{r}{s_0})
\]
and, using the fact that $F_n(s_0)=4\pi$, we have that,
\begin{equation}
 \label{loga3}
\log\Bigg(\frac{4\pi(1+\sqrt{\tfrac{a_n}{b_n}})-F_n(r)}{F_n(r)-4\pi(1-\sqrt{\tfrac{a_n}{b_n}})}\Bigg)\leq -2\sqrt{\frac{a_n}{b_n}}\log(\tfrac{r}{s_0}).
\end{equation}
Hence, from (\ref{loga3}) we deduce that,
\begin{eqnarray}
 \nonumber
 4\pi(1+\sqrt{\tfrac{a_n}{b_n}})&\leq& F_n(r)+ \Big(\frac{r}{s_0}\Big)^{-2\sqrt{\frac{a_n}{b_n}}}\Big(F_n(r)-4\pi(1-\sqrt{\tfrac{a_n}{b_n}})\Big) \\ \label{ka2}
 &\leq& F_n(r)+ \Big(\frac{r}{s_0}\Big)^{-2\sqrt{\frac{a_n}{b_n}}}F_n(r).
\end{eqnarray}

We notice that $F_n$ and $s_0$ are uniformly bounded: indeed,
\begin{align}
 F_n(r)\leq\int_{|y|\leq\frac{l_n}{2}}\Ks e^{\displaystyle v_n}\,d\mu_n \to \int_{\R^2}K_0 e^{\displaystyle w}\,dy=4\pi(1+{\tfrac{1}{\sqrt{\overline\sigma}}})\leq 8\pi,
 \label{boundedFS}
\end{align}
as $n\to+\infty$. This shows that $F_n$ is uniformly bounded. \\
Concerning $s_0$, let assume there exists a subsequence such that $s_0\to +\infty$. Let us consider the inclusions (\ref{inclusion3}) with $R=(r_{1,n})^{\delta^{-2}}\geq 1$, for $n$ large enough. Hence, it holds that $B_{r_{1,n}}(0)\subset\O^n_{m_n((r_{1,n})^{\delta^{-1}})}$. This implies
\[
 \int_{B_{r_{1,n}}(0)}\Ks e^{\displaystyle v_n}\,d\mu_n\leq \int_{\O_{m_n((r_{1,n})^{\delta^{-1}})}}\Ks e^{\displaystyle v_n}\,d\mu_n = \int_{\O_{\vns(T)}}\Ks e^{\displaystyle v_n}\,d\mu_n,
\]
where $T=(\pi^{-1}\xi_n(m_n((r_{1,n})^{\delta^{-1}})))^{\frac{1}{2}}$. Now, using the estimate (\ref{raggigenerali3}), we have that, for $n$ large enough,
\[
T\leq(\tfrac{1}{2})^{\alpha_n}(r_{1,n})^{\delta^{-2}}\leq3(\tfrac{1}{2})^{\alpha_\infty+1}(r_1)^{\delta^{-2}}=:T_{\infty}.
\]
Therefore, using the monotonicity of $\vns$ and taking $n$ large enough such that $s_0>T_{\infty}$, we deduce
\[
 \int_{B_{r_{1,n}}(0)}\Ks e^{\displaystyle v_n}\,d\mu_n\leq \int_{\O_{\vns(s_0)}}\Ks e^{\displaystyle v_n}\,d\mu_n=F_n(s_0)
\longrightarrow 4\pi,
\]
as $n\to+\infty$. On the other hand, this fact implies a contradiction because the first integral is converging to $4\pi(1+\frac{1}{\sqrt{{\overline\sigma}}})-\epsilon$, which is greater than $4\pi$, provided $\epsilon$ small enough.
\\

Hence, there exists a positive constant $C$ for which
\begin{equation}
 \label{meglio5}
 F_n(r)\geq4\pi(1+\sqrt{\tfrac{a_n}{b_n}})-Cr^{-2\sqrt{\frac{a_n}{b_n}}},
\end{equation}
for $s_0\leq r\leq \tilde{R}_n$. Obviously (\ref{meglio5}) holds for
$\tilde{R}_n\leq r\leq\hat{R}_n$. \\
By the definition of $a_n$ and $b_n$, we have that
\[
 \frac{b_n}{a_n}\leq \underset{|y|,|z|\leq(\frac{L_n}{4})^{\frac{1}{2}}}{\sup}\frac{\Ks(y)}{\Ks(z)}\leq {\overline\sigma} + \frac{B}{|\log (\sqrt{\rho}\tau_n^{\frac{1}{2}})|}\leq {\overline\sigma} +C_1|\log(\tau_n)|^{-1}.
\]
Hence,
\[
 \sqrt{\frac{a_n}{b_n}}\geq \frac{1}{\sqrt{{\overline\sigma}}}-C_2|\log(\tau_n)|^{-1},
\]
which, combined with (\ref{meglio5}), implies that,
\begin{equation}
 \label{meglio6}
 F_n(r)\geq4\pi(1+{\tfrac{1}{\sqrt{\overline\sigma}}})-Cr^{-{\tfrac{2}{\sqrt{\overline\sigma}}}}-C_2|\log(\tau_n)|^{-1},
\end{equation}
for ${s}_0\leq r\leq \hat R_n$. The latter estimate, together with (\ref{integrali3}), implies that
\begin{eqnarray}
 \nonumber
 \int_{B_R(0)}\Ks(y)e^{\displaystyle v_n(y)}\, d\mu_n &\geq& F_n\Big((\tfrac{3}{2})^{\alpha_n}R^{\delta^2}\Big) \\
 &\geq& 4\pi(1+{\tfrac{1}{\sqrt{\overline\sigma}}})-CR^{-\Big({2\tfrac{\delta^2}{\sqrt{\overline\sigma}}}\Big)}-C_2|\log(\tau_n)|^{-1},   \label{key3}
\end{eqnarray}
for $S_0\leq R\leq \big(\frac{|x_n|}{4\tau_n}\big)^{\frac{1}{2}}$, where $S_0=\max\{1,\big((\tfrac{2}{3})^{\alpha_n}{s}_0\big)^{\delta^{-2}}\}$. Hence,
\begin{eqnarray}
 \label{mma3}
 \int_{R\leq |y|\leq l_n}\Ks(y)e^{\displaystyle v_n(y)}\, d\mu_n \leq CR^{-\Big({2\tfrac{\delta^2}{\sqrt{\overline\sigma}}}\Big)}+C_2|\log(\tau_n)|^{-1},
\end{eqnarray}
for $S_0\leq R\leq \big(\frac{|x_n|}{4\tau_n}\big)^{\frac{1}{2}}$. \\
At this point, from (\ref{Rnscondition}) and for $n$ large enough, we have that $$\Rns\leq\tfrac{1}{4}(\tfrac{3}{2})^{\alpha_n} (\tfrac{|x_n|}{4\tau_n})^{\frac{\delta^2}{2}}.$$ Moreover, by (\ref{raggixntn}),
\[
 2\Rns<\big(\tfrac{|x_n|}{4\tau_n}\big)^{\frac{1}{2}} \hspace{2cm}\text{and}\hspace{2cm} 2\Rns<\hat R_n.
\]
Hence, by using (\ref{mma3}), we deduce for every $R\in [S_0,2\overline R_n ]$,
\begin{equation}
 \label{mma4}
 \int_{ R\leq|y|\leq l_n}\sing \Ks(y)e^{\displaystyle v_n(y)}\,dy\leq CR^{-\Big({2\tfrac{\delta^2}{\sqrt{\overline\sigma}}}\Big)}+C|\log(\tau_n)|^{-1}.
\end{equation}
In particular, (\ref{mma4}) holds also for $\tilde{S_0}\leq R\leq 2\overline R_n$, with $\tilde{S_0}=\max\{S_0,r_{1,n}\}$ and we have that $\{R\leq|y|\leq l_n\}\subset\{r_{1,n}\leq|y|\leq l_n\}$. At this point, we apply again the generalization of Suzuki's lemma (see the appendix, section \ref{Suzuki}) in the ball $B_R(x)$, with $|x|=2R$. We notice that $B_R(x)\subset\{R\leq|y|\leq l_n\}$ for $n$ sufficiently large. Hence, for $2\tilde{S_0}\leq|x|\leq \overline R_n$,
\begin{eqnarray*}
 \nonumber
 v_n(x)&\leq&\frac{1}{2\pi R}\int_{\p B_R(x)}v_n\,dl-2\log\Big(1-\tfrac{b}{2\beta}\int_{B_R(x)}|\tfrac{\tau_n}{x_n}x+\tfrac{x_n}{|x_n|}|^{2\alpha_n}e^{\displaystyle v_n}\,dx\Big)_+
 \\
 &\leq& \frac{1}{2\pi R}\int_{\p B_R(x)}v_n\,dl+C\epsilon \\
 &=&\frac{1}{\pi R^2}\int_{B_R(x)}v_n(y)\,dy+C\epsilon
\end{eqnarray*}
and by applying Jensen's inequality and (\ref{mma4}), we have that,

\begin{eqnarray}
 \nonumber
 e^{\dis v_n(x)}&\leq&\frac{C}{\pi R^2}\int_{B_R(x)}e^{\displaystyle v_n(y)}\,dy \hspace{3cm} \\ \nonumber
 &\leq& \frac{C}{\pi R^2 a}\int_{B_R(x)}\Ks(y) e^{\displaystyle v_n(y)}\,d\mu_n \\ \nonumber
 &=& \frac{C}{\pi a|x|^{2}}\int_{B_R(x)}\Ks(y) e^{\displaystyle v_n(y)}\,d\mu_n \\ \label{guaio4}
 &\leq& C_3\Big[|x|^{(-2-2\frac{\delta^{2}}{\sqrt{{\overline\sigma}}})}+|\log(\tau_n)|^{-1}|x|^{-2}\Big],
\end{eqnarray}
for $2\tilde{S_0}\leq|x|\leq \overline R_n$. Here we have used the fact that, if $y\in B_R(x)$, then $\sing\geq c_0>0$.

Using this improved estimate, we prove that the following integral
\begin{eqnarray}
 \label{logfinito}
 I=\underset{\{ 2\tilde S_0 \leq |x|\leq \overline{R}_n\}}\int \log(\tfrac{|x|}{\rho})\singx \Ks e^{\displaystyle v_n}\, dx&\leq& C
\end{eqnarray}
is bounded by an uniform constant. Indeed,
\begin{eqnarray*}
 I&\leq& C \underset{\{ 2\tilde S_0 \leq |x|\leq \overline{R}_n\}}\int \log(\tfrac{|x|}{\rho})\Big[|x|^{(-2-2\frac{\delta^{2}}{\sqrt{{\overline\sigma}}})}+|\log(\tau_n)|^{-1}|x|^{-2}\Big] \,dx \\
 &\leq& C\Big(\frac{(2\tilde{S_0})^{-2\frac{\delta^2}{\sqrt{{\overline\sigma}}}}}{2\tfrac{\delta^2}{\sqrt{{\overline\sigma}}}}\log(2\tilde{S_0})+\frac{(2\tilde{S_0})^{-2\frac{\delta^2}{\sqrt{{\overline\sigma}}}}}{(2\tfrac{\delta^2}{\sqrt{{\overline\sigma}}})^2}\Big)+C|\log(\tau_n)|^{-1}(\log(\tfrac{\overline R_n}{\rho}))^2\\
 &\leq& C,
\end{eqnarray*}
where we used (\ref{Rnscondition}).

\hfill \\
Now we want to refine the estimate (\ref{mma4}) when $R$ is greater or equal to $\overline R_n$. \\
From the definition of $l_n$ and by Fatou's lemma, we have that
\begin{equation}
 \label{massapiccola}
 \underset{n\to\infty}\lim\Big( \underset{\tfrac{\Rns}{4}\leq|y|\leq 4l_n}\int\sing\Ks(y)e^{\displaystyle v_n(y)}\,dy\Big)=0,
\end{equation}
where we recall that by definition of $\rho$ and $\overline{\rho}$, we have that $4l_n\leq4\tfrac{\rho}{\tau_n}\leq \tfrac{\overline\rho}{\tau_n}\leq\tfrac{1}{2\tau_n}$. \\
In the following, we will denote
\[
 \epsilon_n:=\underset{\tfrac{\Rns}{4}\leq|y|\leq 4l_n}\int\sing\Ks(y)e^{\displaystyle v_n(y)}\,dy.
\]

As we have observed above, $v_n$ is a solution of the inequality $-\D v_n\leq{b}\sing e^{\displaystyle v_n}$ in the set $\{|y|\leq \tfrac{1}{2\tau_n}\}$, so again we apply a generalization of a lemma by Suzuki (see the appendix, section \ref{Suzuki}, for further details). \\
Let $r\leq\frac{1}{2}|x|$, $\tfrac{\Rns}{2}\leq |x|\leq {2l_n}$ and $n$ large enough, such that,
\begin{align*}
 \nonumber
 v_n(x)&\leq\frac{1}{2\pi r}\int_{\p B_r(x)}v_n\,dl-2\log\Big(1-\tfrac{b}{2\beta_{\alpha,B_r(x)}}\int_{B_r(x)}\sing e^{\displaystyle v_n}\,dy\Big)_+ \\
 &\leq \frac{1}{2\pi r}\int_{\p B_r(x)}v_n\,dl+C\epsilon_n\\
 &\leq\frac{1}{\pi r^2}\int_{B_r(x)}v_n(y)\,dy+C\epsilon_n,
\end{align*}
where $C$ is a suitable constant and $\beta_{\alpha,B_r(x)}$ equals to $4\pi(1+\alpha_n)$  if $-\tfrac{x_n}{\tau_n}\in B_r(x)$ and equals to $4\pi$ if $-\tfrac{x_n}{\tau_n}\notin \overline{B_r(x)}$.  Moreover, we have used the fact that $\overline {B_r(x)}\subset\{y:\tfrac{\Rns}{4}\leq|y|\leq 4l_n\}$. At this point, by fixing $r=\frac{1}{2}|x|$, if $y\in B_r(x)$, then $\sing\geq (1+\tfrac{3}{2}\tfrac{\tau_n}{|x_n|}|x|)^{2\alpha_n}$. Therefore, by using Jensen's inequality, we have that,
\begin{eqnarray*}
 \nonumber
 e^{\dis v_n(x)}&\leq&\frac{C^{\epsilon_n}}{\pi r^2}\int_{B_r(x)}e^{\displaystyle v_n(y)}\,dy \hspace{3cm} \\
 &\leq& \frac{C^{\epsilon_n}}{\pi r^2 {a}}\frac{1}{(1+\tfrac{3}{2}\tfrac{\tau_n}{|x_n|}|x|)^{2\alpha_n}}\int_{B_r(x)}\sing\Ks(y) e^{\displaystyle v_n(y)}\,dy \\
 &\leq& C_1C^{\epsilon_n}{\epsilon_n}|x|^{-2}(1+\tfrac{3}{2}\tfrac{\tau_n}{|x_n|}|x|)^{-2\alpha_n}.
\end{eqnarray*}
This implies that
\begin{equation}\label{stimaalto22}
 v_n(x)\leq -2\log|x|-{2\alpha_n}\log(1+\tfrac{3}{2}\tfrac{\tau_n}{|x_n|}|x|)+\log(C_1C^{\epsilon_n}{\epsilon_n}),
\end{equation}
for $\tfrac{\Rns}{2}\leq|x|\leq 2l_n$.
Now, if $r\in[\Rns,l_n]$, then by using (\ref{stimaalto22}), we have
\begin{eqnarray*}\nonumber
v_n(x)+2\log r+2\alpha_n\log(1+\tfrac{3}{2}\tfrac{\tau_n}{|x_n|}r)
\leq \log(C_1)+\log(\epsilon_n)<0,
\end{eqnarray*}
for $\frac{r}{2}\leq|x|\leq 2r$, provided $n$ is large enough. \\
Let $r_n=r\in[\Rns,l_n]$ and let
\[
\tilde{v}_n(x)=v_n(rx)+2\log r+2\alpha_n\log(1+\tfrac{3}{2}\tfrac{\tau_n}{|x_n|}r), \qquad\text{with}\,\, \tfrac{1}{2}\leq|x|\leq 2,
 \]
then, $\tilde{v}_n(x)\leq \log(C_1)+\log(\epsilon_n)$ and satisfies
\[-\D \tilde{v}_n(x)=\Big(\tfrac{\big|\tfrac{\tau_n}{|x_n|}rx+\tfrac{x_n}{|x_n|}\big|}{\big(1+\tfrac{3\tau_n}{2|x_n|}r\big)}\Big)^{2\alpha_n}\Ks(rx)e^{\dis \tilde{v}_n(x)}=:f_n(x)\qquad\text{in}\,\, \frac{1}{2}\leq |x|\leq 2.
\]
Let us analyze the following Dirichlet problem
\[
\left\{ \begin{array}{cc}
-\D w_n= f_n &  \text{in} \, B_2\backslash B_{1\diagup2},\\
    w_n=0 &\text{on} \,\, \p(B_2\backslash B_{1\diagup2}),
\end{array}
\right.
\]
then, noticing that $f_n\geq0$ and using the weak maximum principle, $w_n\geq0$ in $\overline{B_2\backslash B_{1\diagup2}}$.
Morever, by using standard elliptic estimates (\cite{GT}), we deduce that $\|w_n\|_{L^\infty}\leq \tilde{C}\epsilon_n$.
Indeed, let us define, \[z_n=\tfrac{\big|\tfrac{\tau_n}{|x_n|}rx+\tfrac{x_n}{|x_n|}\big|}{\big(1+\tfrac{3\tau_n}{2|x_n|}r\big)}.\]
If $r\frac{\tau_n}{|x_n|}\leq\frac{1}{3}$ or $r\frac{\tau_n}{|x_n|}\geq 3$, then $|z_n|\geq\frac{1}{11}$  so that $\|f_n\|_{L^\infty(B_2\backslash B_{1\diagup2})}\leq {C}\epsilon_n$ and using standard elliptic estimates (\cite{GT}) then we have $\|w_n\|_{L^\infty}\leq \tilde{C}\epsilon_n$. \\ Otherwise, if $\frac{1}{3}<r\frac{\tau_n}{|x_n|}<3$, then we split the above Dirichlet problem in the following two problems
\[
\left\{ \begin{array}{cc}
&-\D w_{1,n}= f_n\chi_{B_\delta(-\frac{1}{r}\frac{x_n}{\tau_n})}=g_n  \qquad \text{in} \,\, B_2\backslash B_{1\diagup2},\\
      &w_{1,n}=0 \qquad\qquad\qquad\qquad \text{on} \,\, \p(B_2\backslash B_{1\diagup2}),
\end{array}
\right.
\]
\[
\left\{ \begin{array}{cc}
&-\D w_{2,n}= f_n\chi_{B^c_\delta(-\frac{1}{r}\frac{x_n}{\tau_n})}=d_n  \qquad \text{in} \,\, B_2\backslash B_{1\diagup2},\\
      &w_{2,n}=0 \qquad\qquad\qquad\qquad \text{on} \,\, \p(B_2\backslash B_{1\diagup2}),
\end{array}
\right.
\]
for a certain $\delta>0$. We notice that, for $n$ large enough, we can always choose $q>1$ such that $\alpha_n q+1\geq d_\infty>0$. Thus,
\[
 \|g_n\|^q_{L^q(B_2\backslash B_{1\diagup2})}=\int_{B_2\backslash B_{1\diagup2}}f_n^{q}\,\chi_{B_\delta(-\frac{x_n}{r\tau_n})}=\int_{B_\delta(-\frac{|x_n|}{r\tau_n})\cap (B_2\backslash B_{1\diagup2})}f_n^q\leq C\delta^{2(\alpha_n q+1)}\epsilon_n^{q}.
\]
By standard elliptic estimates (\cite{GT}), $\|w_{1,n}\|_{L^\infty(B_2\backslash B_{1\diagup2})}\leq C_1\epsilon^{q}$, for a certain positive constant $C_1$. \\
On the other hand, $\|d_n\|_{L^\infty(B_2\backslash B_{1\diagup2})}= \|f_n\|_{L^\infty(B^c_\delta(-\frac{1}{r}\frac{x_n}{\tau_n})\cap (B_2\backslash B_{1\diagup2}))}\leq C\delta^{2\alpha_n}\epsilon_n$, then by standard elliptic estimates (\cite{GT}), $\|w_{2,n}\|_{L^\infty(B_2\backslash B_{1\diagup2})}\leq C_2\epsilon_n$, for a certain positive constant $C_2$.
Hence, also in this case, we have $\|w_n\|_{L^\infty}\leq \tilde{C}\epsilon_n$. \\

Then, the function $h_n=w_n-\tilde{v}_n$ is harmonic and $\underset{\partial B_1}{\inf} h_n\geq 0$, whence by using the Harnack principle, there exists $\gamma_1\in(0,1)$ such that
\[
 \underset{\partial B_1}{\gamma_1\sup} \, h_n\leq \underset{\partial B_1}{\inf} h_n.
\]
Since $-\tilde{v}_n\leq h_n\leq \tilde{C}\epsilon_n-\tilde{v}_n$, we have that
\[
 \underset{\partial B_1}{\sup} \, (-\tilde{v}_n) \leq \gamma_1^{-1}\underset{\partial B_1}{\inf} (\tilde{C}\epsilon_n-\tilde{v}_n),
\]
that is,
\begin{eqnarray*} \underset{\partial B_1}{\sup} \, (-v_n(rx)-2\log r-2\alpha_n\log(1+\tfrac{3}{2}\tfrac{\tau_n}{|x_n|}r))\leq \hspace{5cm}\\ \hspace{3cm} \leq\tilde C\gamma_1^{-1}\epsilon_n+\gamma_1^{-1}\underset{\partial B_1}{\inf} (-v_n(rx)-2\log r-2\alpha_n\log(1+\tfrac{3}{2}\tfrac{\tau_n}{|x_n|}r))
\end{eqnarray*}
\[
 -\underset{\partial B_r}{\inf}\, v_n\leq C\gamma_1^{-1}\epsilon_n-\gamma_1^{-1}\underset{\partial B_r}{\sup}\, v_n+2(1-\gamma_1^{-1})\log r+2\alpha_n(1-\gamma_1^{-1})\log(1+\tfrac{3}{2}\tfrac{\tau_n}{|x_n|}r)
\]
which implies that, for $r\in[\Rns,l_n]$, it holds
\begin{equation}
\label{xe}
\underset{\partial B_r}{\sup}\, v_n\leq C\epsilon_n+\gamma_1\underset{\partial B_r}{\inf}\, v_n-2(1-\gamma_1)\log r-2\alpha_n(1-\gamma_1)\log(1+\tfrac{3}{2}\tfrac{\tau_n}{|x_n|}r),
\end{equation}

Next, let us analyze the following three cases:\\
$$r\in[\Rns,\tfrac{|x_n|}{4\tau_n}],\,\, r\in[\tfrac{|x_n|}{4\tau_n},4\tfrac{|x_n|}{\tau_n}]\,\,\text{ and}\,\, r\in[4\tfrac{|x_n|}{\tau_n},l_n],$$\\
with the aim in particular of showing that for $r\in[\Rns,l_n]$,
\begin{equation}
 \label{pocamassa}
 \underset{\{ r\leq |x|\leq l_n\}}\int \singx \Ks e^{\displaystyle v_n}\, dx\leq C r^{-\beta},
\end{equation}
with $\beta=\min\{\tfrac{2\gamma_1}{\sqrt{\sigma_1}},\tfrac{2\gamma_1}{\sqrt{\sigma_2}}-2\alpha_n,2(1+\alpha_n)\tfrac{\gamma_1}{C_1}\}>0$, for some $\sigma_1,\sigma_2>\overline\sigma$ and $C_1>0$,
and
\begin{equation}
\label{loginfinito}
 \underset{\{ \overline{R}_n\leq |x|\leq l_n\}}\int \log(\tfrac{|x|}{\rho})\singx \Ks e^{\displaystyle v_n}\, dx\leq C.
\end{equation}
\hfill \\
Concerning the case $r\in[\Rns,\tfrac{|x_n|}{4\tau_n}]$, we apply the regular ``$\sup+\inf$'' result by Chen-Lin (\cite{CL}), which implies
\begin{equation}
\label{ChenLin}
 \underset{\partial B_r}{\inf}\, v_n\leq C-2(1+\tfrac{1}{\sqrt{\tilde\sigma}})\log r,
\end{equation}
for some $\tilde\sigma\geq{\overline\sigma}$. Indeed, we consider $r\in[\tfrac{\Rns}{2},\tfrac{|x_n|}{2\tau_n}]$ and define
\[
 \tilde{w}_n(z)=v_n(rz)+2\log r, \,\,\,\,\,\,\,\,\,\, |z|\leq 1,
\]
which satisfies
\[
 -\D \tilde{w}_n(z)=|\tfrac{\tau_n}{|x_n|}rz+\tfrac{x_n}{|x_n|}|^{2\alpha_n}\Ks(rz)e^{\dis\tilde{w}_n(z)}=\tilde{K}_n(z)e^{\tilde{w}_n(z)}, \,\,\,\,\,\,\,\,\,\, |z|\leq 1,
\]
where $\tilde{K}_n(z)\in [\tilde{a},\tilde{b}]$, for $0<\tilde{a}<\tilde{b}<+\infty$ and
\begin{eqnarray}
\nonumber
 \frac{\tilde{K}_n(x)}{\tilde{K}_n(y)}&\leq& \frac{|\tfrac{\tau_n}{|x_n|}rx+\tfrac{x_n}{|x_n|}|^{2\alpha_n}}{|\tfrac{\tau_n}{|x_n|}ry+\tfrac{x_n}{|x_n|}|^{2\alpha_n}}\frac{K_n(\tau_nrx+x_n)}{K_n(\tau_nry+x_n)} \\ \nonumber
 &\leq& \bigg|\frac{\tfrac{\tau_n}{|x_n|}rx+\tfrac{x_n}{|x_n|}}{\tfrac{\tau_n}{|x_n|}ry+\tfrac{x_n}{|x_n|}}\bigg|^{2\alpha_n}\Big[{\overline\sigma}+\frac{B}{|\log (\tau_nr|x-y|)|}\Big] \\  \nonumber
 &\leq&\bigg(1+\tfrac{\tau_n}{|x_n|}r\tfrac{|x|+|y|}{1-\tfrac{\tau_n}{|x_n|}r|x|}\bigg)^{2|\alpha_n|}\Big[{\overline\sigma}+\frac{B}{|\log (\tau_nr|x-y|)|}\Big] \\
 &\leq&\sigma_1+\frac{ B_1}{|\log (\tau_nr|x-y|)|}\leq\sigma_1+\frac{ B_1}{|\log (\rho|x-y|)|}\nonumber
 \\
 &\leq&\sigma_1+\frac{B_2}{|\log |x-y||},\nonumber
\end{eqnarray}
forall $|x|,|y|\leq 1$ and with ${\sigma_1}>{\overline\sigma}$. We recall that $\rho\leq \tfrac{1}{4}$.
Then we can apply the sharp ``$\sup+ \inf$'' inequality (\cite{CL}) with compact set $K=\{0\}$ and open set $\O=B_{1}$ and deduce that
\[
 \tfrac{1}{\sqrt{\sigma_1}}\tilde{w}_n(0)+\underset{ B_1}{\inf}\, \tilde{w}_n\leq C,
\]
which, by definition of $\tilde{w}_n$, implies (\ref{ChenLin}). At this, point, by using (\ref{ChenLin}) together with (\ref{xe}), we have that, for $r\in[\Rns,\tfrac{|x_n|}{4\tau_n}]$,
\[
 \underset{\partial B_r}{\sup}\, v_n\leq C\epsilon_n+C\gamma_1-2(1+\tfrac{\gamma_1}{\sqrt{\sigma_1}})\log r-2\alpha_n(1-\gamma_1)\log(1+\tfrac{3}{2}\tfrac{\tau_n}{|x_n|}r).
\]
Hence, for $|x|\in[\Rns,\tfrac{|x_n|}{4\tau_n}]$, we have that,
\begin{equation}
 \label{stimaforse}
 e^{\dis v_n(x)}\leq \tfrac{C|x|^{-2(1+\tfrac{\gamma_1}{\sqrt{\sigma_1}})}}{(1+\tfrac{3}{2}\tfrac{\tau_n}{|x_n|}|x|)^{2\alpha_n(1-\gamma_1)}}
\end{equation}
and this implies that, for $r\in[\Rns,\tfrac{|x_n|}{4\tau_n}]$,
\begin{equation}
 \label{massetta1}
 \underset{\{ r\leq |x|\leq \frac{|x_n|}{4\tau_n}\}}\int \singx \Ks e^{\displaystyle v_n}\, dx\leq C r^{-\tfrac{2\gamma_1}{\sqrt{\sigma_1}}}
\end{equation}
and
\begin{equation}
 \label{loggino1}
 \underset{\{ \overline R_n\leq |x|\leq \frac{|x_n|}{4\tau_n}\}}\int \log(\tfrac{|x|}{\rho})\singx \Ks e^{\displaystyle v_n}\, dx\leq C.
 \end{equation}
\\

Concerning the case $r\in[\tfrac{|x_n|}{4\tau_n},4\tfrac{|x_n|}{\tau_n}]$, let us define
\[
 \tilde{w}_n(z)=v_n(rz)+2\log r +2\alpha_n\log(1+\tfrac{3}{2}\tfrac{\tau_n}{|x_n|}r),
\]
for $z\in B_1(0) $. Without loss of generality, we can assume that $-\frac{x_n}{r\tau_n}=(-\frac{|x_n|}{r\tau_n},0)\in\R^2$. Thus,
the singular point satisfies
$$-\frac{x_n}{r\tau_n}\notin\overline {B_1(0)\setminus B_{\frac{7}{8}}((-1,0))}.$$
Therefore, we have that,
\[
-\D \tilde{w}_n(x)=\tilde{K}_n(z)e^{\dis\tilde{w}_n(x)}\qquad\text{in}\,\, B_1(0)\setminus B_{\frac{7}{8}}((-1,0)),
\]
where
\[
 \tilde{K}_n(z)=\Big(\tfrac{\big|\tfrac{\tau_n}{|x_n|}rx+\tfrac{x_n}{|x_n|}\big|}{\big(1+\tfrac{3\tau_n}{2|x_n|}r\big)}\Big)^{2\alpha_n}\Ks(rx).
\]

The function $\tilde w_n$ satisfies the hypothesis of the regular ``$\sup+\inf$'' inequality (\cite{CL}). Indeed, $\tilde{K}_n(z)\in[\tilde a,\tilde{b}]$, for some $0<\tilde a<\tilde b<+\infty$, and
\begin{eqnarray}
\nonumber
 \frac{\tilde{K}_n(x)}{\tilde{K}_n(y)}&\leq& \frac{|\tfrac{\tau_n}{|x_n|}rx+\tfrac{x_n}{|x_n|}|^{2\alpha_n}}{|\tfrac{\tau_n}{|x_n|}ry+\tfrac{x_n}{|x_n|}|^{2\alpha_n}}\frac{K_n(\tau_nrx+x_n)}{K_n(\tau_nry+x_n)} \\ \nonumber
 &\leq& \bigg|\frac{\tfrac{\tau_n}{|x_n|}rx+\tfrac{x_n}{|x_n|}}{\tfrac{\tau_n}{|x_n|}ry+\tfrac{x_n}{|x_n|}}\bigg|^{2\alpha_n}\Big[{\overline\sigma}+\frac{B}{|\log (\tau_nr|x-y|)|}\Big] \\  \label{Ktilde2}
 &\leq&\sigma_2+\frac{ B_1}{|\log (\tau_nr|x-y|)|}\leq\sigma_2+\frac{ B_1}{|\log (\rho|x-y|)|}\nonumber
 \\&\leq&\sigma_2+\frac{ B_2}{|\log (|x-y|)|},\nonumber
\end{eqnarray}
forall $|x|,|y|\in B_1(0)\setminus B_{\frac{7}{8}}((-1,0))$ and with ${\sigma_2}>{\overline\sigma}$.
\\
Then, by using the sharp sup+inf inequality (\cite{CL}) with $K=\{0\}$ and \\$\Omega=\{B_1(0)\setminus B_{\frac{7}{8}}((-1,0))\}$, we have that
\[
 \tfrac{1}{\sqrt\sigma_2}\underset{K}\sup \, \tilde w_n +\underset{B_1}\inf \, \tilde w_n\leq \tfrac{1}{\sqrt\sigma_2}\underset{K}\sup \, \tilde w_n +\underset{\Omega}\inf \, \tilde w_n \leq C,
\]
which implies that
\[
 \underset{\partial B_r}\inf \, \tilde v_n \leq C -2(1+\tfrac{1}{\sqrt{\sigma_2}})\log r-2\alpha_n(1+\tfrac{1}{\sqrt{\sigma_2}})\log(1+\tfrac{3}{2}\tfrac{\tau_n}{|x_n|}r).
\]
By using the previous inequality and (\ref{xe}), we have that,
\[
 \underset{\partial B_r}{\sup}\, v_n\leq C\epsilon_n+\gamma_1 C-2(1+\tfrac{\gamma_1}{\sqrt{\sigma_2}})\log r-2\alpha_n(1+\tfrac{\gamma_1}{\sqrt{\sigma_2}})\log(1+\tfrac{3}{2}\tfrac{\tau_n}{|x_n|}r).
\]

Hence, this implies that, for $|x|\in[\tfrac{|x_n|}{4\tau_n},4\tfrac{|x_n|}{\tau_n}]$,
\begin{equation}
 \label{stimaforse2}
 e^{\dis v_n(x)}\leq \tfrac{C|x|^{-2(1+\frac{\gamma_1}{\sqrt{\sigma_2}})}}{(1+\tfrac{3}{2}\tfrac{\tau_n}{|x_n|}|x|)^{2\alpha_n(1+\frac{\gamma_1}{\sqrt{\sigma_2}})}}.
\end{equation}
Therefore, we have that, for $r\in[\tfrac{|x_n|}{4\tau_n},4\tfrac{|x_n|}{\tau_n}]$,
\begin{equation}
 \label{massetta2}
 \underset{\{ r\leq |x|\leq 4\tfrac{|x_n|}{\tau_n}\}}\int \singx \Ks e^{\displaystyle v_n}\, dx\leq C r^{2\alpha_n-\tfrac{2\gamma_1}{\sqrt{\sigma_2}}}
\end{equation}
and
\begin{equation}
 \label{loggino2}
 \underset{\{ \tfrac{|x_n|}{4\tau_n}\leq |x|\leq 4\tfrac{|x_n|}{\tau_n}\}}\int \log(\tfrac{|x|}{\rho})\singx \Ks e^{\displaystyle v_n}\, dx\leq C.
\end{equation}

Indeed, let us prove (\ref{massetta2}) and split the set $\{ r\leq |x|\leq 4\tfrac{|x_n|}{\tau_n}\}= A\cup B$, with $A=\{ r\leq |x|\leq 4\tfrac{|x_n|}{\tau_n}\}\cap\{r\leq|x|\leq|\frac{\tau_n}{|x_n|}x+\tfrac{x_n}{|x_n|}|\}$ and  $B=\{ r\leq |x|\leq 4\tfrac{|x_n|}{\tau_n}\}\cap\{r\leq|\frac{\tau_n}{|x_n|}x+\tfrac{x_n}{|x_n|}|\leq|x|\}$. Thus,
\begin{eqnarray*}
 \underset{\{ r\leq |x|\leq 4\tfrac{|x_n|}{\tau_n}\}}\int \singx \Ks e^{\displaystyle v_n}\, dx&=&\underset{A}\int \singx \Ks e^{\displaystyle v_n}\, dx  \\
 &+& \underset{B}\int \singx \Ks e^{\displaystyle v_n}\, dx.
\end{eqnarray*}
Now, by using (\ref{stimaforse2}), we have that,
\[
 \underset{A}\int \singx \Ks e^{\displaystyle v_n}\, dx\leq C\underset{A}\int |x|^{2\alpha_n-2(1+\tfrac{\gamma_1}{\sqrt{\sigma_2}})}\, dx \leq C r^{2\alpha_n-\tfrac{2\gamma_1}{\sqrt{\sigma_2}}},
\]
while, by using (\ref{stimaforse2}) and by recalling that if $x\in B$, then $\singx\leq r^{2\alpha_n}$, we have that,
\begin{align*}
 \underset{B}\int \singx \Ks e^{\displaystyle v_n}\, dx&\leq Cr^{2\alpha_n}\underset{\{ r\leq |x|\leq 4\tfrac{|x_n|}{\tau_n}\}}\int |x|^{-2(1+\frac{\gamma_1}{\sqrt{\sigma_2}})}\, dx \\ &\leq C r^{2\alpha_n-\tfrac{2\gamma_1}{\sqrt{\sigma_2}}}.
\end{align*}

Hence (\ref{massetta2}) easily follows.\\
Concerning (\ref{loggino2}), we argue as above and define $\{ \tfrac{|x_n|}{4\tau_n}\leq |x|\leq 4\tfrac{|x_n|}{\tau_n}\}= A\cup B$, with $A=\{  \tfrac{|x_n|}{4\tau_n}\leq |x|\leq 4\tfrac{|x_n|}{\tau_n}\}\cap\{ \tfrac{|x_n|}{4\tau_n}\leq|x|\leq|\frac{\tau_n}{|x_n|}x+\tfrac{x_n}{|x_n|}|\}$ and $B=\{\tfrac{|x_n|}{4\tau_n}\leq |x|\leq 4\tfrac{|x_n|}{\tau_n}\}\cap\{ \tfrac{|x_n|}{4\tau_n}\leq|\frac{\tau_n}{|x_n|}x+\tfrac{x_n}{|x_n|}|\leq|x|\}$. Thus,
\begin{align*}
 \underset{\{  \tfrac{|x_n|}{4\tau_n}\leq |x|\leq 4\tfrac{|x_n|}{\tau_n}\}}\int \log (\tfrac{|x|}{\rho})\singx \Ks e^{\displaystyle v_n}\, dx= J_1+J_2,
 \end{align*}
 with
 \begin{align*}J_1=\underset{A}\int \log(\tfrac{|x|}{\rho}) \singx \Ks e^{\displaystyle v_n}\, dx, \\  J_2= \underset{B}\int \log(\tfrac{|x|}{\rho}) \singx \Ks e^{\displaystyle v_n}\, dx.
\end{align*}
Now, by using (\ref{stimaforse2}), we have that,
\[
 J_1\leq C\underset{A}\int \log (\tfrac{|x|}{\rho})|x|^{2\alpha_n-2(1+\tfrac{\gamma_1}{\sqrt{\sigma_2}})}\, dx \leq C,
\]
while, by using (\ref{stimaforse2}), we have that,
\begin{align*}
 J_2&\leq C\underset{B}\int \log (\tfrac{|x|}{\rho}) |\tfrac{\tau_n}{|x_n|}x+\tfrac{x_n}{|x_n|}|^{2\alpha_n-2(1+\tfrac{\gamma_1}{\sqrt{\sigma_2}})}\, dx \\ &\leq C (\tfrac{|x_n|}{4\tau_n})^{2\alpha_n-2-\tfrac{2\gamma_1}{\sqrt{\sigma_2}}}\underset{B}\int \log (\tfrac{|x|}{\rho})\,dx\leq C.
\end{align*}

Hence (\ref{loggino2}) easily follows.\\

Concerning the case $r\in[4\tfrac{|x_n|}{\tau_n},l_n]$, let us define
\[
 \tilde{w}_n(z)=v_n(\tfrac{r}{2}z-\tfrac{x_n}{\tau_n})+2\log r +2\alpha_n\log(\tfrac{\tau_n}{|x_n|}r)-2(1+\alpha_n)\log 2,
\]
for $z\in B_1(0)$. Thus,
\[
-\D \tilde{w}_n(z)=|z|^{2\alpha_n}\Ks(\tfrac{r}{2}z-\tfrac{x_n}{\tau_n})e^{\dis\tilde{w}_n(z)}\qquad\text{in}\,\, |z|< 1,
\]
which satisfies the hypothesis of the $``\sup+C\inf''$ inequality proved in (\cite{b0}). Then, if $A=\{|z|\leq \tfrac{1}{2}\}$ and $\Omega=
\{|x|< 1\}$, we have,
\begin{equation}
\label{sup+cinf}
 \underset{A}\sup \, \tilde w_n +C_1\underset{B_1}\inf \, \tilde w_n\leq C.
\end{equation}
At this point, we notice that $z_n=\tfrac{2x_n}{r\tau_n}$, so $|z_n|\in (0,\tfrac{1}{2}]$. This implies that $\underset{z\in A}\sup \,  v_n(\tfrac{r}{2}z-\tfrac{x_n}{\tau_n})=v_n(\tfrac{r}{2}z_n-\tfrac{x_n}{\tau_n})=v_n(0)=0$
and
\[
 \underset{z\in A}\sup \, \tilde w_n(z)=2\log r +2\alpha_n\log(\tfrac{\tau_n}{|x_n|}r)-2(1+\alpha_n)\log 2.
\]
On the other hand, if $y\in B_{\tfrac{r}{2}}(-\tfrac{x_n}{\tau_n})$, then $y\in B_r(0)$, which implies that
\[
 \underset{\partial B_r} \inf\,v_n(y)=\underset{B_r(0)} \inf\,v_n(y)\leq \underset{ B_{r/2}(-\frac{x_n}{\tau_n})}\inf\, v_n(y)=\underset{z\in B_1}\inf \, v_n(\tfrac{r}{2}z-\tfrac{x_n}{\tau_n})
\]
and
\[
 \underset{B_1}\inf \, \tilde w_n\geq \underset{\partial B_r} \inf\,v_n(y)+2\log r +2\alpha_n\log(\tfrac{\tau_n}{|x_n|}r)-2(1+\alpha_n)\log 2.
\]

Hence, from (\ref{sup+cinf}) and the previous calculations, we have that,
\[
 \underset{\partial B_r}\inf \, v_n \leq \tfrac{C}{C_1}+2(1+\alpha_n)(1+\tfrac{1}{C_1})\log 2 -2(1+\tfrac{1}{C_1})\log r-2\alpha_n(1+\tfrac{1}{C_1})\log(\tfrac{\tau_n}{|x_n|}r).
\]
Using the latter inequality and (\ref{xe}), we have that,
\begin{eqnarray*}
 \underset{\partial B_r}{\sup}\, v_n&\leq& C-2(1+\tfrac{\gamma_1}{C_1})\log r-2\alpha_n\log\big((1+\tfrac{3}{2}\tfrac{\tau_n}{|x_n|}r)^{1-\gamma_1}(\tfrac{\tau_n}{|x_n|}r)^{(1+\frac{1}{C_1})\gamma_1}\big) \\
 &\leq& C-2(1+\tfrac{\gamma_1}{C_1})\log r-2\alpha_n\log\big((1+\tfrac{3}{2}\tfrac{\tau_n}{|x_n|}r)(\tfrac{\tau_n}{|x_n|}r)^{\frac{\gamma_1}{C_1}}\big),
\end{eqnarray*}
where we used the fact that $-2\alpha_n\log\big((1+\tfrac{3}{2}\tfrac{\tau_n}{|x_n|}r)^{-\gamma_1}(\tfrac{\tau_n}{|x_n|}r)^{\gamma_1}\big)$
is uniformly bounded. Hence, for $|x|\in[4\tfrac{|x_n|}{\tau_n},l_n]$,
\begin{equation}
 \label{stimaforse3}
 e^{\dis v_n(x)}\leq \tfrac{C|x|^{-2(1+\frac{\gamma_1}{C_1})}}{\big((1+\tfrac{3}{2}\tfrac{\tau_n}{|x_n|}|x|)(\tfrac{\tau_n}{|x_n|}|x|)^{\frac{\gamma_1}{C_1}}\big)^{2\alpha_n}}.
\end{equation}

Therefore, for $r\in[4\tfrac{|x_n|}{\tau_n},l_n]$, we have that,
\begin{equation}
 \label{massetta3}
 \underset{\{ r\leq |x|\leq l_n\}}\int \singx \Ks e^{\displaystyle v_n}\, dx\leq C (\tfrac{|x_n|}{\tau_n})^{2\alpha_n\tfrac{\gamma_1}{C_1}}r^{-2(1+\alpha_n)\tfrac{\gamma_1}{C_1}}\leq C r^{-2(1+\alpha_n)\tfrac{\gamma_1}{C_1}}.
\end{equation}
and
\begin{equation}
 \label{loggino3}
 \underset{\{ 4\frac{|x_n|}{\tau_n}\leq |x|\leq l_n\}}\int \log(\tfrac{|x|}{\rho})\singx \Ks e^{\displaystyle v_n}\, dx\leq C
\end{equation} \\
Indeed, let us prove (\ref{massetta3}). By using (\ref{stimaforse3}) and the fact that $$\tfrac{\singx}{(1+\frac{3}{2}\tfrac{\tau_n}{|x_n|}|x|)^{2\alpha_n}}\leq C,$$ for $|x|\geq4\tfrac{|x_n|}{\tau_n}$, we have that
\begin{align*}
 \underset{\{ r\leq |x|\leq l_n\}}\int \singx \Ks e^{\displaystyle v_n}\, dx&\leq C \underset{\{ r\leq |x|\leq l_n\}}\int \tfrac{\singx|x|^{-2(1+\tfrac{\gamma_1}{C_1})}}{\big((1+\tfrac{3}{2}\tfrac{\tau_n}{|x_n|}|x|)(\tfrac{\tau_n}{|x_n|}|x|)^{\tfrac{\gamma_1}{C_1}}\big)^{2\alpha_n}} \\
 &\leq C \underset{\{ r\leq |x|\leq l_n\}}\int (\tfrac{\tau_n}{|x_n|})^{-2\alpha_n\tfrac{\gamma_1}{C_1}}|x|^{-2(1+\tfrac{\gamma_1}{C_1})-2\alpha_n\tfrac{\gamma_1}{C_1}} \\
 &= C (\tfrac{\tau_n}{|x_n|})^{-2\alpha_n\tfrac{\gamma_1}{C_1}}\underset{\{ r\leq |x|\leq l_n\}}\int |x|^{-2-2(1+\alpha_n)\tfrac{\gamma_1}{C_1}} \\
 &\leq C (\tfrac{|x_n|}{\tau_n})^{2\alpha_n\tfrac{\gamma_1}{C_1}} r^{-2(1+\alpha_n)\tfrac{\gamma_1}{C_1}} \\
 &\leq C r^{-2(1+\alpha_n)\tfrac{\gamma_1}{C_1}},
\end{align*}
which is the desired conclusion. \\
Concerning (\ref{loggino3}), we use again (\ref{stimaforse3}) to conclude that
\begin{eqnarray*}
 \underset{\{ r\leq |x|\leq l_n\}}\int \log(\tfrac{|x|}{\rho})\singx \Ks e^{\displaystyle v_n}\, dx\leq \hspace{5cm}\\ \hspace{4cm}  \leq C (\tfrac{\tau_n}{|x_n|})^{-2\alpha_n\tfrac{\gamma_1}{C_1}}\underset{\{ 4\tfrac{|x_n|}{\tau_n}\leq |x|\leq l_n\}}\int \log(\tfrac{|x|}{\rho}) |x|^{-2-2(1+\alpha_n)\tfrac{\gamma_1}{C_1}} \\ \hspace{4cm}
  \leq C ((4\tfrac{|x_n|}{\tau_n})^{-2\tfrac{\gamma_1}{C_1}}\log(\tfrac{|x_n|}{\tau_n})+ (4\tfrac{|x_n|}{\tau_n})^{-2\tfrac{\gamma_1}{C_1}}) \leq C .\hspace{1cm}
\end{eqnarray*}
 \\
At this point, by using (\ref{massetta1}),  (\ref{massetta2}) and (\ref{massetta3}), then \eqref{pocamassa} easily holds and, by using (\ref{loggino1}), (\ref{loggino2}) and (\ref{loggino3}), then also \eqref{loginfinito} holds.\\
Eventually, by combining (\ref{logfinito}) with (\ref{loginfinito}), we deduce that,
\begin{equation}\label{logok}
 \int_{|y|\leq l_n}\log\big(\tfrac{|y|}{\rho}\big)\sing\overline{K}_{n}(y)e^{\displaystyle v_n(y)}\,dy\leq C_2.
\end{equation}
At last, let us first consider the case $l_n<\tfrac{\rho}{\tau_n}$, then by definition of $l_n$ it is readily seen that,
\begin{equation}\label{dis7casobuono}
 \int_{|y|\leq l_n}\sing \Ks(y)e^{\displaystyle v_n(y)}\,dy\geq 4\pi\Big(1+{\frac{1}{\sqrt{\overline\sigma}}}\Big)-C_1|\log(\tau_n)|^{-1}.
\end{equation}

Hence, by using (\ref{stimamaxbuona}), (\ref{logok}) and (\ref{dis7casobuono}), we have that,
\begin{eqnarray}
 \nonumber
 M_n &\geq&\int_{|y|\leq l_n}\Big(\tfrac{1}{2\pi}\log(\tfrac{1}{\tau_n})-\tfrac{1}{2\pi}\log\big(\tfrac{|y|}{\rho}\big)\Big)\sing\Ks(y)e^{\displaystyle v_n(y)}\,dy + p_n \\ \nonumber
 &\geq&2(1+\tfrac{1}{\sqrt{\overline\sigma}})\log(\tfrac{1}{\tau_n})- C +\underset{\O}{\inf}\, u_n\\ \nonumber
 &=&(1+\tfrac{1}{\sqrt{\overline\sigma}})(M_n+2\alpha_n\log|x_n|)- C +\underset{\O}{\inf}\, u_n, \nonumber
\end{eqnarray}
for a positive constant $C$ and this implies that
\[
 \tfrac{1}{\sqrt{\overline\sigma}}u_n(x_n) +\underset{\O}{\inf}\, u_n\leq C-2\alpha_n(1+\tfrac{1}{\sqrt{\overline\sigma}})\log|x_n|,
\]
which is a contradiction to (\ref{assurdo}).
\hfill \\
\hfill \\
Finally, we discuss the last part, that is the case in which $l_n=\tfrac{\rho}{\tau_n}$.
Still in this situation (\ref{mma4}) holds true and, by arguing as before, we are surely able to recover again (\ref{logok}).
Conversely, we cannot easily conclude (\ref{dis7casobuono}) from the definition of $l_n$. However, by using (\ref{key3}), we see that,
\begin{align}\nonumber
 \int_{|y|\leq l_n}|\tfrac{\tau_n}{x_n}y+&\tfrac{x_n}{|x_n|}|^{2\alpha_n} \Ks(y)e^{\displaystyle v_n(y)}\,dy\geq \\ \nonumber &\geq\int_{|y|\leq{(\frac{|x_n|}{4\tau_n})^{\frac 12}}}\Ks(y)e^{\displaystyle v_n(y)}\, d\mu_n  \\
 &\geq 4\pi(1+{\tfrac{1}{\sqrt{\overline\sigma}}})-C{\tfrac{|x_n|}{\tau_n}}^{-\Big({\tfrac{\delta^2}{\sqrt{\overline\sigma}}}\Big)}-C|\log(\tau_n)|^{-1}. \label{dis7casocattivo}
\end{align}
Hence, by recalling that $\sigma>\overline\sigma$, we have that for $n$ large enough,
\begin{eqnarray}
 \label{dispeggio}
 \int_{|y|\leq l_n}\sing \Ks(y)e^{\displaystyle v_n(y)}\,dy\geq 4\pi(1+{\tfrac{1}{\sqrt{\sigma}}})-C|\log(\tau_n)|^{-1}.
\end{eqnarray}

By using (\ref{stimamaxbuona}), (\ref{logok}) and (\ref{dispeggio}) , we have that
\begin{eqnarray}
 \nonumber
 M_n &\geq&\int_{|y|\leq l_n}\Big(\tfrac{1}{2\pi}\log(\tfrac{1}{\tau_n})-\tfrac{1}{2\pi}\log\big(\tfrac{|y|}{\rho}\big)\Big)\sing\Ks(y)e^{\displaystyle v_n(y)}\,dy + p_n \\ \nonumber
 &\geq&2(1+\tfrac{1}{\sqrt{\sigma}})\log(\tfrac{1}{\tau_n})- C +\underset{\O}{\inf}\, u_n\\ \nonumber
 &=&(1+\tfrac{1}{\sqrt{\sigma}})(M_n+2\alpha_n\log|x_n|)- C +\underset{\O}{\inf}\, u_n, \nonumber
\end{eqnarray}
for a positive constant $C$ and this implies that
$$
 \tfrac{1}{\sqrt{\sigma}}u_n(x_n) +\underset{\O}{\inf}\, u_n\leq C-2\alpha_n(1+\tfrac{1}{\sqrt{\sigma}})\log|x_n|
$$
which is a contradiction to (\ref{assurdo1}).\\
This completes the proof of case (II).

 \hfill\\

\section{Case (I)}\label{Case I}
In this section we discuss the adaptation of the argument in Chen-Lin \cite{CL} required to handle case (I). We will be rather sketchy concerning those parts which can be worked out exactly as in \cite{CL} or either as in case (II). We refer the interested reader to \cite{cos} for further details.\\
By assumption, there exists $C>0$ such that
\[
 \tfrac{|x_n|}{\delta_n}\leq C.
\]
We recall that $M_n:=u_n(x_n)$, $\delta_n:=\exp\big(-\frac{M_n}{2(1+\alpha_n)}\big)$ and define $\overline{L}_n:=\frac{1}{2}\delta_n^{-1}$. At this stage, along a subsequence which we do not relabel, there exists a point $y_0\in\mathbb{R}^2$ such that $\frac{x_n}{\delta_n}\rightarrow y_0$.
Let us define
\[
 v_n(y):=u_n(\delta_n y)-u_n(x_n),
\]
where $y\in\tilde{D}_n:=\Big\{\big|\delta_n y-x_n\big|\leq \frac{1}{2}  \Big\}=\Big\{\big| y-\xndn\big|\leq \overline{L}_n  \Big\}$ and we consider $n$ sufficiently large so that $|x_n|\leq\frac{1}{2}$ and $\delta_n y\in A$, whenever $y\in\tilde{D}_n$. \\  The function $v_n$ satisfies a Liouville type equation,
\begin{equation}\label{vequation}
  -\D v_n(y)=|y|^{2\alpha_n}\overline{K}_n(y)e^{\displaystyle v_n(y)},
\end{equation}
\[
 v_n(y)\leq 0=v_n(\txndn),
\]

with $y\in \tilde{D}_n$ and where $\Ks(y)=K_n(\delta_ny)$. \\
Now, by using the Green representation formula and by arguing as in \eqref{Green4}, for every $|y-\txndn|\leq l$, we have that,
\[
 v_n(y)\geq -C(1+l^{3+2\alpha_n}),
\]
for any fixed $l$.
\\
By standard elliptic estimates (\cite{GT}), we can pass to subsequences $\{v_n\}$, $\{\Ks\}$ such that
\begin{eqnarray*}
 v_n\to w \qquad &\text{in}& \,\, C_{loc}^{1,\gamma}(\R^2\backslash \{0\})\cap W_{loc}^{2,p}(\R^2\backslash\{0\})\cap W_{loc}^{2,q}(\R^2)\cap C_{loc}^{0,\kappa}(\R^2), \\
 \Ks\overset{\star}{\rightharpoonup} K_0 \qquad &\text{in}& \,\, L^{\infty}_{loc}(\R^2),
\end{eqnarray*}
with $\gamma\in(0,1)$, $k\in (0,k_\infty)$ for some $k_\infty\leq1$ which depends on $\alpha_\infty$, $p\geq1$  and $q\in [1,\frac{1}{|\alpha_\infty|})$. Then $w$ weakly satisfies the following equation
\begin{equation}
 -\D w=|y|^{2\alpha_\infty}K_0e^{\displaystyle w} \qquad \text{in}\,\,\mathbb{R}^2.
\end{equation}
From (\ref{Kbasic}) and (\ref{K1}) we deduce that $a\leq K_0\leq b$ and
\[
 \frac{\Ks(x)}{\Ks(y)}\leq \overline\sigma + \frac{B}{|\log|\delta_n(x-y)||}\longrightarrow
\overline\sigma,\qquad \text{for} \,\,n\to+\infty,\]
for every $x,y$ in a compact set and, also in this case, we have that,
\[
 \frac{\esssup \, K_0}{\essinf K_0}\leq \overline\sigma.
\]
Therefore, by using Theorem 1.5 (\cite{b}), we deduce that,
\[
 I_w=\int_{\R^2}|y|^{2\alpha_\infty}K_0e^{\displaystyle w}\geq 4\pi\Big(1+\alpha_\infty\Big)\Big(1+\sqrt{\tfrac{ess.inf \, K_0}{ess.sup \, K_0}}\Big)\geq 4\pi\Big(1+\alpha_\infty\Big)\Big(1+{\tfrac{1}{\sqrt{\overline\sigma}}}\Big).
\]
Now let $\rho=\frac{1}{2}\overline\rho$, $L_n=\rho e^{\frac{M_n}{2(1+\alpha_n)}}=\rh{\delta^{-1}_n}$ and $G(x,z)$ the Green's function of $-\D$ on $B_{\rho}(x_n)$. Then, if $D_{n,L_n}=\{|y-\xndn|\leq L_n\}$, we have
\begin{eqnarray}
\nonumber
 M_n&=&\int_{|z-x_n|\leq\rho}|z|^{2\alpha_n}K_n(z)e^{\dis u_n(z)}G(x_n,z)\,dz + \frac{1}{2\pi\rho}\int_{|z-x_n|=\rho}u_n(z)\,dl \\ \nonumber
 &=&\int_{D_{n,L_n}}|y|^{2\alpha_n}
 \overline{K}_{n}(y)e^{\displaystyle v_n(y)}G(x_n,\delta_ny)\,dy + \frac{1}{2\pi\rho}\int_{|z-x_n|=\rho}u_n(z)\,dl \\
 &=&\int_{D_{n,L_n}}\Big(\tfrac{M_n}{4\pi(1+\alpha_n)}-\tfrac{1}{2\pi}\log\big(\tfrac{|y-\xndn|}{\rho}\big)\Big)|y|^{2\alpha_n}\overline{K}_{n}(y)e^{\displaystyle v_n(y)}\,dy + p_n,  \label{stimamax}
\end{eqnarray}
where $p_n=\frac{1}{2\pi\rho}\int_{|z-x_n|=\rho}u_n(z)\,dl$. \\
Actually, by using \eqref{stimamax} and arguing as in the previous section, we can prove that the total curvature $I_w$ is minimal, namely $$I_w=4\pi\Big(1+\alpha_\infty\Big)\Big(1+{\frac{1}{\sqrt{\overline\sigma}}}\Big).$$
Thus, we use again Theorem 1.5 (\cite{b}) to say that $w$ is a radially symmetric and strictly decreasing function with respect to $0$. In particular, by using the fact that $v_n$ is locally uniformly converging to $w$ and recalling that $\xndn$ is a maximum point of $v_n$ and $\xndn\to y_0$, we have that $y_0=0$.\hfill
\medskip

Now we want to find $l_n\leq L_n$ such that, putting ${D}_{n,l_n}=\{|y-\xndn|\leq l_n\}$, we have
\begin{equation}\label{dis1}
 \int_{{D}_{n,l_n}}|y|^{2\alpha_n}\overline{K}_{n}(y)e^{\displaystyle v_n(y)}\,dy\geq 4\pi\Big(1+\alpha_n\Big)\Big(1+{\tfrac{1}{\sqrt{\overline\sigma}}}\Big)-C_1M_n^{-1}
\end{equation}
and
\begin{equation}\label{dis2}
 \int_{{D}_{n,l_n}}\log\big(\tfrac{|y-\xndn|}{\rho}\big)|y|^{2\alpha_n}\overline{K}_{n}(y)e^{\displaystyle v_n(y)}\,dy\leq C_2,
\end{equation}
for $C_1,C_2$ positive constants that do not depend on $n$, as $n\to+\infty$. \\
We define,
\[
 l_n:=\sup\Big\{l\leq L_n\Big|\int_{{D}_{n,l}}|y|^{2\alpha_n}\overline{K}_{n}(y)e^{\displaystyle v_n(y)}\,dy\leq 4\pi\Big(1+\alpha_n\Big)\Big(1+{\tfrac{1}{\sqrt{\overline\sigma}}}\Big)\Big\},
\]
where ${D}_{n,l}=\{|y-\xndn|\leq l\}$.  From the fact that $I_w=4\pi\Big(1+\alpha_\infty\Big)\Big(1+{\frac{1}{\sqrt{\overline\sigma}}}\Big)$ and $v_n\to w$ uniformly on the compacts, we deduce that $l_n\to+\infty$. \\
We can choose $\epsilon$ as a positive number to be fixed later on, such that, for $n>>0$, there exist $r_{1,n}>1$ and $r_1>1$ such that $4r_{1,n}\leq \frac{l_n}{4}$ and
\[
 \int_{B_{r_1}(0)}|y|^{2\alpha_\infty}K_0e^{\displaystyle w}\,dy=4\pi(1+\alpha_\infty)(1+\tfrac{1}{\sqrt{{\overline\sigma}}})-\epsilon,
\]
\[
 \int_{\{y:|y-\xndn|\leq r_{1,n}\}}|y|^{2\alpha_n}\Ks e^{\displaystyle v_n}\,dy=4\pi(1+\alpha_n)(1+\tfrac{1}{\sqrt{{\overline\sigma}}})-\epsilon.
\]
We notice that $r_{1,n}\to r_1$ and, as $n\to+\infty$ ,
\begin{equation}\label{stimanello}
\int_{\{y:r_{1,n}\leq|y-\xndn|\leq l_n\}}|y|^{2\alpha_n}\Ks e^{\displaystyle v_n}\,dy\leq\epsilon.
\end{equation}

At this stage, as we did in \eqref{Suzukiargument}, by using \eqref{stimanello}, Lemma \ref{Suzukilemma}  and the Harnack principle, we deduce the following estimate: for every $1\leq|x-\txndn|\leq \frac{l_n}{4}$,
\begin{equation}\label{3dis}
 v_n(x)\geq \underset{\{\sqrt[\delta]{|x-\xndn|}\leq|y-\xndn|\leq \frac{l_n}{2}\}}{\sup} v_n(y).
\end{equation}
\\
Set
\[
m_n(r):=\underset{\{|y-\xndn|=r\}}{\max} v_n(y), \qquad\qquad t_{0}:=m_n(\tfrac{l_n}{2}).
\]

Now, recalling that $\underset{D_{n,l_n/2}}\max \, v_n=v_n(\txndn)=0$, let us define
\[
 d\mu_n:=|x|^{2\alpha_n}dx, \qquad \qquad d\sigma_n:=|x|^{\alpha_n} dl,
\]
\[
 \O_{t}^n:=\{y\big| |y-\txndn|\leq \tfrac{l_n}{2} \,\,\text{and} \,\, v_n(y)>t \},\qquad \qquad \xi_n(t):=\int_{\Otn}d\mu_n,
\]
for any $t\in(t_{0},0)$. 
It is easy to see that $\overline{\O_t^n}\subseteq\{z\big| |z-\txndn|\leq \frac{l_n}{2} \}$ and, also that $\underset{t\to 0^-}             {\lim}\xi_n(t)=0
$ and $\underset{t\to t_{0}^+}{\lim}\xi_n(t)=\int_{{\O^n_{t_{0}}}}d\mu_n=:\xi_n(t_{0})$ . Since $v_n\in W_{loc}^{2,p}(\R^2\backslash\{0\})$ for $p>2$, as a consequence of the Generalized Sard's Lemma (see (\cite{F})), $\p\Otn$ is a $C^1$ closed curve for a.a $t\in(\ton,0)$ and since $v_n$ satisfies (\ref{vequation}), $\p\Otn$ has null measure for a.a $t\in(\ton,0)$. Actually, it turns out that the level sets of $v_n$ have null measure for every $t\in(\ton,0)$, see Lemma \ref{levelsetlemma} in
the appendix for further details. This easily implies that $\xi_n
$ is a continuous, strictly decreasing function, which is almost everywhere differentiable in $(\ton,0)$. Indeed, by using the Coarea formula (see (\cite{BW})), it holds $$\tfrac{d\xi_n}{dt}(t)=-\int_{\p \Otn}\tfrac{|x|^{2\alpha_n}}{|\nabla v_n|}dl,$$ for almost any $t\in(\ton,0)$. \\
We introduce $\vns(|x-\xndn|)$, the weighted symmetric decreasing rearrangement of $v_n$ with respect to the measure $d\mu_n$, defined in the following way.
For a fixed $r=|x-\txndn|$, we have,
\begin{eqnarray*}
 \vns(r)&:=&\sup\{t\in (\ton,0): \xi_n(t)>\pi r^2\} \\
 &=& \sup\{t\in (\ton,0): s_n(t)>r\},
\end{eqnarray*}
where $s_n(t)=(\pi^{-1}\xi_n(t))^{\tfrac{1}{2}}$ and this implies that
\[
 (\O_t^{n})^\ast:=\{y\big| |y-\txndn|\leq \tfrac{l_n}{2} \,\,\text{and} \,\, \vns(y)>t \}=B_{s_n(t)}(\txndn).
\]
Firstly, we notice that $s_n(t)\leq \sqrt{\tfrac{2}{1+\alpha_n}}(\tfrac{l_n}{2})^{1+\alpha_n}$ for all $t\in(\ton,0)$, see the calculations in \eqref{misurapalle1} . \\
Moreover, the function $\vns:(0,s_n(t_{0}))\to (\ton,0)$ satisfies 
\[
 \underset{r\to s_n(\ton)^-}{\lim}\vns(r)=\ton ,\qquad
  \qquad \underset{r\to 0^+}{\lim}\vns(r)=0.
\]
In particular $\vns$ is continuous and strictly decreasing. Moreover, we can conclude that $\vns$ is locally Lipschitz and almost everywhere differentiable  in $(0,s_n(\ton))$.
At this point, let us define
\begin{equation} \label{Fn}
 F_n(r):=\int_{\Ovn} \Ks(y)e^{\displaystyle v_n(y)}d\mu_n(y),
\end{equation}
which is a locally Lipschitz function in $(0,s_n(\ton))$.  Thus $F_n$ is almost everywhere differentiable in $(0,s_n(\ton))$.
Now, from (\ref{3dis}) the following inclusions hold,
\begin{equation}
 \label{inclusion}
 B_{R^{\delta^2}}(\txndn)\subseteq \O_{m_n(R^\delta)}^n \subseteq B_{R}(\txndn).
\end{equation}
for $1\leq R\leq\frac{l_n}{4}$. We skip the proof of this fact since the argument  is similar to the one used in \eqref{inclusion3}.
Let us set, for every open set $\omega$,
\[
\mu_n(\omega):=\int_{\omega}\,d\mu_n,
\]
then $\mu_n(B_r(0))=\tfrac{\pi}{\alpha_n+1}r^{2\alpha_n+2}$. At this point,
\[B_r(\txndn)=\Big(B_r(0)\cup \big(B_r(\txndn)\backslash B_r(0)\big)\Big)\backslash\Big(B_r(0)\backslash B_r(\txndn)\Big)\] and
$$\mu_n\big(B_r(\txndn)\backslash B_r(0)\big)=r^{2\alpha_n+2}o(1)=\mu_n\big(B_r(0)\backslash B_r(\txndn)\big),$$ as $n\to+\infty$. Thus, we have that
\begin{equation}\label{misurapalle1}
 \mu_n(B_r(\txndn)):=r^{2\alpha_n+2}(\tfrac{\pi}{\alpha_n+1}+o(1)),
\end{equation}
as $n\to+\infty$.
From (\ref{inclusion}), we deduce for $1\leq R\leq \frac{l_n}{4}$,
\[
 \mu_n\big(B_{R^{\delta^2}}(\txndn)   \big)\leq \mu_n\big(\O^n_{m_n(R^\delta)}\big)=\xi_n(m_n(R^\delta))\leq \mu_n\big(B_{R}(\txndn)   \big),
\]
that is,
\[
 R^{\delta^2(2+2\alpha_n)}\Big(\tfrac{\pi}{1+\alpha_n}+o(1)\Big)\leq \xi_n\big(m_n(R^\delta)\big)\leq R^{2+2\alpha_n}\Big(\tfrac{\pi}{1+\alpha_n}+o(1)\Big),
\]
as $n\to+\infty.$ \\
Therefore there exist two constants $c_n^1,c_n^2$, infinitesimal as $n\to+\infty$, such that,
\begin{equation}
 \label{raggigenerali}
 R^{\delta^2(1+\alpha_n)}\Big(\tfrac{1}{\sqrt{1+\alpha_n}}+c_n^1\Big)\leq \Big(\pi^{-1}\xi_n\big(m_n(R^\delta)\big)\Big)^{\frac{1}{2}}\leq R^{1+\alpha_n}\Big(\tfrac{1}{\sqrt{1+\alpha_n}}+c_n^2\Big).
 \end{equation}
By using again (\ref{inclusion}) and the fact that $\vns$ is strictly decreasing, then for $1\leq R\leq\frac{l_n}{4}$,
\begin{equation}
 \label{integrali}
 \int_{B_R(\txndn)}\Ks(y)e^{\displaystyle v_n(y)}\, d\mu_n\geq F_n\Big(\big(\pi^{-1}\xi_n\big(m_n(R^\delta)\big)\big)^\frac{1}{2}\Big)\geq F_n\Big(\tfrac{1}{2\sqrt{1+\alpha_n}}R^{\delta^2(1+\alpha_n)}\Big),
\end{equation}
having chosen $n>>0$ such that $c_n^1>-\frac{1}{2}\geq-\frac{1}{2\sqrt{1+\alpha_n}}$. \\
Hence, fixing $R_n=\Big(\pi^{-1}\xi_n\big(m_n\big(({\frac{l_n}{4}})^{\frac{\delta}{2}}\big)\big)\Big)^\frac{1}{2}$, we have that,
\begin{equation}
 \label{raggi}
 \tfrac{1}{2\sqrt{1+\alpha_n}}\Big(\frac{l_n}{4}\Big)^{\frac{\delta^2}{2}(1+\alpha_n)}\leq R_n\leq \Big(\frac{l_n}{4}\Big)^{\frac{1}{2}(1+\alpha_n)}\Big(\tfrac{1}{\sqrt{1+\alpha_n}}+c_n^2\Big).
\end{equation}
Obviously, $R_n\to +\infty$, as $n\to+\infty$, and $\vns(r)$ is defined for every $r\leq R_n<s_n(t_0)$. \\
Let us define, for a.a $r\in(0,R_n),$
\[
 \hat{K}_n(r):=\frac{F'_n(r)}{2\pi re^{\dis\vns(r)}}
\]
and
\[
 a_n:=\underset{\{|y-\xndn|\leq (\frac{l_n}{4})^{\frac{1}{2}}\}}{ess.\inf} \Ks(y), \qquad b_n:=\underset{\{|y-\xndn|\leq (\frac{l_n}{4})^{\frac{1}{2}}\}}{ess.\sup} \Ks(y).
\]
Then, by using the fact that $\xi_n(\vns)=|B_r|$, we have that for almost any $r\in (0,R_n)$,
\begin{equation}
 \label{Ka}
 a_n\leq \hat{K}_n(r)\leq b_n.
\end{equation}

Let $I_1$ be the set of those $r\in(0,R_n)$ where $\frac{d}{dr}\vns(r)$ does not exist and let $I_2$ be the set of
those $r\in(0,R_n)$ where $\frac{d}{dr}\vns(r)=0$. If we denote $E=\vns(I_1\cup I_2)$, then $\mathcal{H}^1(E)=0$, since $\vns$ is locally Lipschitz. Furthermore, let $I_c=(\vns)^{-1}(E_c)$, where $E_c$ is the set of critical values of $v_n$. By the Generalized Sard's Lemma (\cite{F}), we have that $\mathcal{H}^1(E_c)=0$. Now, let $I$ be the set of those $r$ such that $\vns(r)=t$ for some $t\in (t_0,0)\backslash\{E\cup E_c\}$. \\
Hence, for any $r\in I$, we can apply the Cauchy-Schwartz inequality to deduce
\begin{eqnarray*}
 \nonumber
 \Big(\int_{\partial \O^n_{\vns (r)}}\,d\sigma_n\Big)^2&\leq& \Big(\int_{\partial \O^n_{\vns (r)}}\frac{|x|^{2\alpha_n}}{|\nabla v_n|}\,dl\Big)\Big(\int_{\partial \O^n_{\vns (r)}}|\nabla v_n|\,dl\Big)\\
 &\leq& \Big(-\frac{d\vns(r)}{dr}\Big)^{-1}(2\pi r)\Big(\int_{\partial \O^n_{\vns (r)}}-\frac{\partial v_n}{\partial \nu}\,dl\Big),
\end{eqnarray*}
where we used the fact that $\vns(r)=\eta^{\star}_n(\pi r^2)$, $\eta^{\star}_n$ is the inverse of $\xi_n$ and $\nu$ is the exterior normal to $\partial \O^n_{\vns (r)}$. Moreover, we have that,
\[
 \int_{\partial \O^n_{\vns (r)}}|\nabla v_n|\,dl=\int_{ \O^n_{\vns (r)}}\Ks e^{\displaystyle v_n}\,d\mu_n=F_n(r),
\]
which implies, for every $s\in I$,
\begin{equation}
 \label{disdiff1}
 2\pi r F_n(r)\geq \Big(\int_{\partial \O^n_{\vns (r)}}\,d\sigma_n\Big)^2\Big(-\frac{d\vns(r)}{dr}\Big).
\end{equation}
Since $v_n$ is superharmonic and by using the maximum principle we can deduce that each connected component of $\O^n_{\vns (r)}$ is simply connected. \\
Hence, we apply Huber's inequality (see Proposition \ref{Huber.App.prop} in the appendix) and conclude that
\[
 \Big(\int_{\partial \O^n_{\vns (r)}}\,d\sigma_n\Big)^2\geq \beta_{\alpha_n,r}\xi_n(\vns(r)),
\]
where $\beta_{\alpha_n,r}=\beta_{\alpha_n,\O^n_{\vns (r)}}$ is equal to $4\pi(1+\alpha)$ if $0\in\O^n_{\vns (r)}$ or $4\pi$ if $0\notin\overline\O^n_{\vns (r)}$. \\
By using this inequality in (\ref{disdiff1}), we conclude that
\begin{eqnarray}
 F_n(r)&\geq& \beta_{\alpha_n,r}\xi_n(\vns(r))\Big(-\frac{d\vns(r)}{dr}\Big) \frac{1}{2\pi r } \nonumber \\
 &=& \beta_{\alpha_n,r}\frac{r}{2}\Big(-\frac{d\vns(r)}{dr}\Big),\label{diff2}
\end{eqnarray}
for every $r\in I$. The latter inequality is always true for $r\in I_2$, so it holds for $r\in I\cup I_2$. If we define $I_3$ to be the set of those $r$ for which (\ref{diff2}) does not hold, then $I_3\subset (I_1\cup I_c)\backslash I_2$. On the other hand, since $\mathcal{H}^1(I_1)=0$ and $\mathcal{H}^1(E_c)=0$, there is no possibility that $I_3$ does have positive measure. This means that (\ref{diff2}) holds for a.a. $r\in(0,R_n)$. \\
At this point, from (\ref{diff2}), we deduce that for almost any $r\in (0,R_n)$,
\begin{align}\nonumber
 \frac{d}{dr}\Big(  \frac{rF'_n(r)}{\hat{K}_n(r)}\Big)=\frac{d}{dr}(2\pi r^2e^{\dis\vns(r)})&=4\pi r e^{\dis\vns(r)}+2\pi r^2e^{\dis\vns(r)}\frac{d}{dr}\vns(r) \\ \nonumber
 &\geq \frac{2F'_n(r)}{\hat{K}_n(r)}-\frac{F'_n(r)}{\hat{K}_n(r)}\frac{2}{\beta_{\alpha_n,r}}F_n(r) \\ \label{diffeq}
 &= \frac{2F'_n(r)}{\hat{K}_n(r)}\Big(1-\frac{F_n(r)}{\beta_{\alpha_n,r}}\Big). 
\end{align}
Let us set
\begin{equation}
 \label{s1}
 s_1=s_{1,n}:=\inf\{\tilde r>0\,\big| \,0\in\O^n_{\vns (r)}, \forall r>\tilde r\}.
\end{equation}

At this point, from the inclusions (\ref{inclusion}) and taking $R=(\frac{l_n}{4})^\frac{1}{2}$, we deduce that,
\begin{align}\nonumber
 \int_{B_{R^{\delta^2}}(\xndn)}\Ks e^{\displaystyle v_n}\,d\mu_n\leq \int_{\O^n_{m_n(R^\delta)}}\Ks e^{\displaystyle v_n}\,d\mu_n=F_n(R_n)\leq\int_{B_R(\xndn)}\Ks e^{\displaystyle v_n}\,d\mu_n. \\
 \label{s0esiste}
\end{align}
This implies that $F_n(R_n)\to 4\pi(1+\alpha_\infty)(1+\frac{1}{\sqrt{{\overline\sigma}}})$, as $n\to+\infty$. Hence, by the continuity and monotonicity of $F_n$, there exists, for $n$ sufficiently large, $s_0=s_{0,n}<R_n$ such that $F_n(s_0)=4\pi(1+\alpha_n)$.  \\
Now, to analyze  (\ref{diffeq}), we split the discussion in two cases: firstly, if $s_0>s_1$, then for every $r\in (s_0,R_n)$ we have
\begin{align}
 \nonumber
 \frac{rF'_n(r)}{\hat{K}_n(r)}&\geq \frac{2}{b_n}\int_0^{s_1}F'_n\Big(1-\frac{F_n}{4\pi}\Big)ds+\frac{2}{b_n}\int_{s_1}^{s_0}F'_n\Big(1-\frac{F_n}{4\pi(1+\alpha_n)}\Big)ds \\ \label{firstcase}
 &+\frac{2}{a_n}\int_{s_0}^{r}F'_n\Big(1-\frac{F_n}{4\pi(1+\alpha_n)}\Big)ds.
\end{align}
On the other hand, if $s_1\geq s_0$, then for every $r\in (s_0,R_n)$,
\begin{align}
 \label{secondcase}
 \frac{rF'_n(r)}{\hat{K}_n(r)}&\geq \frac{2}{b_n}\int_0^{s_0}F'_n\Big(1-\frac{F_n}{4\pi}\Big)ds+\int_{s_0}^{r}\frac{2F'_n}{\hat{K}_n}\Big(1-\frac{F_n}{\beta_{\alpha_n,s}}\Big)ds.
\end{align}

Both (\ref{firstcase}) and (\ref{secondcase}) imply the following crucial inequality,
\begin{equation}
 \label{diffeq2}
 \tfrac{rF'_n(r)}{\hat{K}_n(r)}\geq -\tfrac{1}{4\pi(1+\alpha_n)a_n}\big(F_n(r)-4\pi(1+\alpha_n)(1-{\scriptstyle\sqrt{\tfrac{a_n}{b_n}}})\big)\big(F_n(r)-4\pi(1+\alpha_n)(1+{\scriptstyle\sqrt{\tfrac{a_n}{b_n}}})\big).
\end{equation}
We skip the proof of this fact and refer to the argument used in Section \ref{Case (II)} to prove \eqref{diffeq23}.\\
Once we have established (\ref{diffeq2}) for every $s_0\leq r \leq R_n$, we define
\[
 \tilde{R}_n:=\sup\Big\{r\leq R_n\big| F_n(r)\leq 4\pi(1+\alpha_n)(1+\sqrt{\tfrac{a_n}{b_n}})\Big\}.
\]
Then, from (\ref{diffeq2}), it follows that
\begin{equation}
 \label{dd}
 \frac{F'_n(r)}{F_n(r)-4\pi(1+\alpha_n)(1-\sqrt{\tfrac{a_n}{b_n}})}+\frac{F'_n(r)}{4\pi(1+\alpha_n)(1+\sqrt{\tfrac{a_n}{b_n}})-F_n(r)}\geq 2\sqrt{\frac{a_n}{b_n}}\frac{1}{r},
\end{equation}
for $s_0\leq r \leq \tilde{R}_n$. By integrating the previous inequality,
\[
 \log\bigg(\tfrac{F_n(r)-4\pi(1+\alpha_n)(1-\sqrt{\tfrac{a_n}{b_n}})}{4\pi(1+\alpha_n)(1+\sqrt{\tfrac{a_n}{b_n}})-F_n(r)}\bigg)+\log\bigg(\tfrac{4\pi(1+\alpha_n)(1+\sqrt{\tfrac{a_n}{b_n}})-F_n(s_0)}{F_n(s_0)-4\pi(1+\alpha_n)(1-\sqrt{\tfrac{a_n}{b_n}})}\bigg)\geq 2\sqrt{\frac{a_n}{b_n}}\log(\tfrac{r}{s_0})
\]
and, using the fact that $F_n(s_0)=4\pi(1+\alpha_n)$, we have that
\begin{equation}
 \label{loga}
\log\Bigg(\frac{4\pi(1+\alpha_n)(1+\sqrt{\tfrac{a_n}{b_n}})-F_n(r)}{F_n(r)-4\pi(1+\alpha_n)(1-\sqrt{\tfrac{a_n}{b_n}})}\Bigg)\leq -2\sqrt{\frac{a_n}{b_n}}\log(\tfrac{r}{s_0}).
\end{equation}
Thus, from (\ref{loga}) we deduce that,
\begin{eqnarray}
 \nonumber
 4\pi(1+\alpha_n)(1+\sqrt{\tfrac{a_n}{b_n}})&\leq& F_n(r)+ \Big(\tfrac{r}{s_0}\Big)^{-2\sqrt{\frac{a_n}{b_n}}}\scriptstyle\Big(F_n(r)-4\pi(1+\alpha_n)(1-\sqrt{\tfrac{a_n}{b_n}})\Big) \\ \label{ka}
 &\leq& F_n(r)+ \Big(\frac{r}{s_0}\Big)^{-2\sqrt{\frac{a_n}{b_n}}}F_n(r).
\end{eqnarray}
We notice that $F_n$ and $s_0$ are uniformly bounded and we refer to \eqref{boundedFS} in section \ref{Case (II)} for a similar proof of these facts.
\\
Hence, there exists a positive constant $C$ such that
\begin{equation}
 \label{meglio}
 F_n(r)\geq4\pi(1+\alpha_n)(1+\sqrt{\frac{a_n}{b_n}})-Cr^{-2\sqrt{\frac{a_n}{b_n}}},
\end{equation}
for $s_0\leq r\leq \tilde{R}_n$. Obviously, (\ref{meglio}) is always true for
$\tilde{R}_n\leq r\leq{R}_n$. \\
By the definition of $a_n$ and $b_n$, we have that,
\[
 \frac{b_n}{a_n}\leq \underset{|y-\xndn|,|z-\xndn|\leq(\frac{l_n}{4})^{\frac{1}{2}}}{\sup}\frac{\Ks(y)}{\Ks(z)}\leq {\overline\sigma} + \frac{c}{|\log (l_n^{\frac{1}{2}}e^{-\frac{M_n}{2}})|}\leq \overline\sigma +C_1M_n^{-1}.
\]
Hence we have,
\[
 \sqrt{\frac{a_n}{b_n}}\geq \frac{1}{\sqrt{{\overline\sigma}}}-CM_n^{-1},
\]
which, combined with (\ref{meglio}), implies that,
\begin{equation}
 \label{meglio2}
 F_n(r)\geq4\pi(1+\alpha_n)(1+{\tfrac{1}{\sqrt{\overline\sigma}}})-Cr^{-{\tfrac{2}{\sqrt{\overline\sigma}}}}-CM_n^{-1},
\end{equation}
for $s_0\leq r\leq R_n$. The latter estimate, together with (\ref{integrali}), implies that
\begin{align}
 \nonumber
 \int_{B_R(\xndn)}\Ks(y)e^{\displaystyle v_n(y)}\, d\mu_n &\geq F_n\Big(\tfrac{1}{2\sqrt{1+\alpha_n}}R^{\delta^2(1+\alpha_n)}\Big)\geq \\
 \geq 4\pi(1+\alpha_n)&(1+{\tfrac{1}{\sqrt{\overline\sigma}}})-C(1+\alpha_n)^{\frac{1}{\sqrt{{\overline\sigma}}}}R^{-\Big({2(1+\alpha_n)\tfrac{\delta^2}{\sqrt{\overline\sigma}}}\Big)}-CM_n^{-1}, \label{key}
\end{align}
for $S_0\leq R\leq \big(\frac{l_n}{4}\big)^{\frac{1}{2}}$, where $S_0=\max\{1,(2\sqrt{1+\alpha_n }s_0)^{(1+\alpha_n)^{-1}\delta^{-2}}\}$. Hence,
\begin{eqnarray}
 \int_{R\leq |y-\txndn|\leq l_n}\Ks(y)e^{\displaystyle v_n(y)}\, d\mu_n \leq C(1+\alpha_n)^{\frac{1}{\sqrt{{\overline\sigma}}}}R^{-\Big({2(1+\alpha_n)\tfrac{\delta^2}{\sqrt{\overline\sigma}}}\Big)}+CM_n^{-1},\label{mma}
\end{eqnarray}
for $S_0\leq R\leq \big(\tfrac{l_n}{4}\big)^{\frac{1}{2}}$. \\
In particular, \eqref{mma} holds for $\tilde{S_0}\leq R\leq \big(\frac{l_n}{4}\big)^{\frac{1}{2}}$, with $\tilde{S_0}=\max\{S_0,r_{1,n}\}$ and we have that $\{R\leq|y-\xndn|\leq l_n\}\subset\{r_{1,n}\leq|y-\xndn|\leq l_n\}$. At this point, we apply again Suzuki's lemma (see the appendix, section \ref{Suzuki}) in the ball $B_R(x)$, with $|x-\xndn|=2R$. We notice that $B_R(x)\subset\{R\leq|y-\xndn|\leq l_n\}$ for $n$ sufficiently large. Hence, for $2\tilde{S_0}\leq|x-\xndn|\leq (l_n)^{\frac{1}{2}}$,
\begin{eqnarray*}
 \nonumber
 v_n(x)&\leq&\frac{1}{2\pi R}\int_{\p B_R(x)}v_n\,dl-2\log\Big(1-\frac{b}{8\pi}\int_{B_R(x)}|y|^{2\alpha_n}e^{\displaystyle v_n}\,dx\Big)_+ \\
 &\leq& \frac{1}{2\pi R}\int_{\p B_R(x)}v_n\,dl+\log 4 \\
 &=&\frac{1}{\pi R^2}\int_{B_R(x)}v_n(y)\,dy+\log 4
\end{eqnarray*}
and by applying Jensen's inequality and (\ref{mma}), we have that,
\begin{eqnarray}
 \nonumber
 e^{\displaystyle v_n(x)}&\leq&\frac{4}{\pi R^2}\int_{B_R(x)}e^{\displaystyle v_n(y)}\,dy \hspace{3cm} \\ \nonumber
 &\leq& \frac{4}{\pi R^2 a}\frac{1}{2^{2\alpha_n}|x-\xndn|^{2\alpha_n}}\int_{B_R(x)}|y|^{2\alpha_n}\Ks(y) e^{\displaystyle v_n(y)}\,dy \\ \nonumber
 &=& \frac{16}{\pi a2^{2\alpha_n}|x-\xndn|^{2(1+\alpha_n)}}\int_{B_R(x)}\Ks(y) e^{\displaystyle v_n(y)}\,d\mu_n \\ \label{guaio}
 &\leq& C_3\Big[\big|x-\tfrac{x_n}{\delta_n}\big|^{(1+\alpha_n)(-2-2\frac{\delta^{2}}{\sqrt{{\overline\sigma}}})}+M_n^{-1}\big|x-\tfrac{x_n}{\delta_n}\big|^{-2(1+\alpha_n)}\Big],
\end{eqnarray}
for $2\tilde{S_0}\leq|x-\xndn|\leq (l_n)^{\frac{1}{2}}$.
Now we choose $l_n^\star$ satisfying $(\log l_n^\star)^2=\log l_n$ and, for $n$ sufficiently large, we assume that $2\tilde{S_0}\leq l_n^\star\leq (\frac{l_n}{4})^{\frac{1}{2}}$. \\
Hence, (\ref{mma}) implies that,
\begin{eqnarray}
 \int_{l_n^\star\leq |y-\txndn|\leq l_n}\Ks(y)e^{\displaystyle v_n(y)}\, d\mu_n \leq C(1+\alpha_n)^{\frac{1}{\sqrt{\overline\sigma}}}(l_n^\star)^{-\Big({2(1+\alpha_n)\tfrac{\delta^2}{\sqrt{\overline\sigma}}}\Big)}+CM_n^{-1} .  \label{I3}
\end{eqnarray}
At this point, we estimate the following integral
\begin{eqnarray}
 \label{log}
 \int_{|y-\xndn|\leq l_n}\log\big(\tfrac{|y-\xndn|}{\rho}\big)|y|^{2\alpha_n}\Ks e^{\displaystyle v_n}\, dy= I_1+I_2+I_3,
\end{eqnarray}
where
\[
 I_1=\int_{|y-\xndn|\leq 2\tilde{S_0}}\log\big(\tfrac{|y-\xndn|}{\rho}\big)|y|^{2\alpha_n}\Ks e^{\displaystyle v_n}\, dy,
\]
\[
 I_2=\int_{2\tilde{S_0}<|y-\xndn|\leq l_n^\star}\log\big(\tfrac{|y-\xndn|}{\rho}\big)|y|^{2\alpha_n}\Ks e^{\displaystyle v_n}\, dy,
\]
\[
 I_3=\int_{l_n^\star<|y-\xndn|\leq l_n }\log\big(\tfrac{|y-\xndn|}{\rho}\big)|y|^{2\alpha_n}\Ks e^{\displaystyle v_n}\, dy.
\]
For the first integral, using the fact that $v_n\leq0$, it yields,
\begin{align} \nonumber
 I_1&\leq\log\big(\tfrac{2\tilde{S_0}}{\rho}\big)b\int_{|y-\xndn|\leq 2\tilde{S_0}}|y|^{2\alpha_n}\, dy \\ \nonumber
 &\leq\log\big(\tfrac{2\tilde{S_0}}{\rho}\big)b\int_{\{|y-\xndn|\leq 2\tilde{S_0}\}\cap\{|y|>|y-\xndn|\}}|y|^{2\alpha_n}\, dy+ \\ \nonumber
 &\hspace{2cm}+\log\big(\tfrac{2\tilde{S_0}}{\rho}\big)b\int_{\{|y-\xndn|\leq 2\tilde{S_0}\}\cap\{|y|\leq|y-\xndn|\}}|y|^{2\alpha_n}\, dy
 \\ \nonumber
 &\leq \log\big(\tfrac{2\tilde{S_0}}{\rho}\big)2b\int_{|y-\xndn|\leq 2\tilde{S_0}}|y-\txndn|^{2\alpha_n} \,dy \\ \nonumber
 &\leq  \tfrac{4b}{2+2\alpha_n} \tilde{S_0}^{2\alpha_n+2}\log\big(\tfrac{2\tilde{S_0}}{\rho}\big)\leq C,
\end{align}
where $C$ is uniform in $n$ because $\tilde{S_0}$ does not depend on $n$. \\
Let us estimate $I_2$. By using (\ref{guaio}) and the fact that if $2\tilde{S_0}<|y-\xndn|\leq l_n^\star$, then, for n large enough,  $|y|^{2\alpha_n}\leq\frac{1}{2^{2\alpha_n}}|y-\xndn|^{2\alpha_n}$,
\begin{align*} \nonumber
 I_2&\leq bC_3 \int_{2\tilde{S_0}<|y-\xndn|\leq l_n^\star}\log\big(\tfrac{|y-\xndn|}{\rho}\big)|y|^{2\alpha_n}\big|y-\tfrac{x_n}{\delta_n}\big|^{(1+\alpha_n)(-2-2\frac{\delta^{2}}{\sqrt{\overline\sigma}})}\,dy+ \\  \nonumber
 &\, \hspace{1,8cm}+CM_n^{-1}\int_{2\tilde{S_0}<|y-\xndn|\leq l_n^\star}\log\big(\tfrac{|y-\xndn|}{\rho}\big)|y|^{2\alpha_n}\big|y-\tfrac{x_n}{\delta_n}\big|^{-2(1+\alpha_n)}\, dy  \\ \nonumber
 &\leq  C \int_{2\tilde{S_0}<|y-\xndn|\leq l_n^\star}\log\big(\tfrac{|y-\xndn|}{\rho}\big)\big|y-\tfrac{x_n}{\delta_n}\big|^{(-2-2(1+\alpha_n)\frac{\delta^{2}}{\sqrt{\overline\sigma}})}\,dy+ \\  \nonumber
 &\, \hspace{1,8cm}+CM_n^{-1}\int_{2\tilde{S_0}<|y-\xndn|\leq l_n^\star}\log\big(\tfrac{|y-\xndn|}{\rho}\big)\big|y-\tfrac{x_n}{\delta_n}\big|^{-2}\, dy \\ \nonumber
 &\leq C \int_{2\tilde{S_0}}^{l_n^\star}\log\big(\tfrac{r}{\rho}\big) r^{(-1-2(1+\alpha_n)\frac{\delta^{2}}{\sqrt{\overline\sigma}})}\,dr +CM_n^{-1}\int_{2\tilde{S_0}}^{l_n^\star}\log\big(\tfrac{r}{\rho}\big)r^{-1}\, dr \\ \nonumber
 &= C \rho^{-2(1+\alpha_n)\frac{\delta^{2}}{\sqrt{\overline\sigma}}}\int_{\frac{2\tilde{S_0}}{\rho}}^{\frac{l_n^\star}{\rho}}\log (r) \, r^{(-1-2(1+\alpha_n)\frac{\delta^{2}}{\sqrt{\overline\sigma}})}\,dr +CM_n^{-1}\int_{\frac{2\tilde{S_0}}{\rho}}^{\frac{l_n^\star}{\rho}}\log(r)\,r^{-1}\, dr \\ \nonumber
 &\leq  C \Big(\tfrac{(2\tilde{S_0})^{-2(1+\alpha_n)\frac{\delta^2}{\sqrt{\overline\sigma}}}}{2(1+\alpha_n)\tfrac{\delta^2}{\sqrt{\overline\sigma}}}\log(2\tilde{S_0})+\tfrac{(2\tilde{S_0})^{-2(1+\alpha_n)\frac{\delta^2}{\sqrt{\overline\sigma}}}}{(2(1+\alpha_n)\tfrac{\delta^2}{\sqrt{\overline\sigma}})^2}\Big)+CM_n^{-1}(\log(\tfrac{l_n^{\star}}{\rho}))^2
 \end{align*}
 \begin{align*}
 &\leq \tilde{C}(1+M_n^{-1}(\log(\tfrac{l_n^{\star}}{\rho}))^2)
 \leq \tilde{\tilde{C}}(1+M_n^{-1}\log {l_n})\leq C_4,
\end{align*}
where we used the fact that $$M_n^{-1}\log {l_n}\leq M_n^{-1}\log {L_n}=M_n^{-1}\log{(\rho e^{\frac{M_n}{2(1+\alpha_n)}})}\leq c.$$
Finally, we deal with $I_3$, by using (\ref{I3}). Hence we have,
\begin{eqnarray}
\nonumber
 I_3&\leq& \log(\tfrac{l_n}{\rho})\int_{l_n^\star<|y-\xndn|\leq l_n }|y|^{2\alpha_n}\Ks e^{\displaystyle v_n}\, dy \\ \nonumber
 &\leq& C\log(l_n)\Big(C_1(1+\alpha_n)^{\frac{1}{\sqrt{\overline\sigma}}}(l_n^\star)^{-\Big({2(1+\alpha_n)\tfrac{\delta^2}{\sqrt{\overline\sigma}}}\Big)}+CM_n^{-1}\Big) \\
 &\leq& C_5.
\end{eqnarray}
Therefore,
\[
 \int_{|y-\xndn|\leq l_n}\log\big(\tfrac{|y-\xndn|}{\rho}\big)|y|^{2\alpha_n}\Ks e^{\displaystyle v_n}\, dy\leq C,
\]
for a positive constant which does not depend on $n$ and this establishes (\ref{dis2}). \\
We conclude this part and deduce (\ref{dis1}). Clearly, as far as $l_n<L_n$, then  (\ref{dis1}) is true by the definition of $l_n$. In the case $l_n=L_n$, by choosing $R=(\frac{l_n}{4})^{\frac{1}{2}}$ in (\ref{key}), we have that,
\[
 \int_{|y-\xndn|\leq L_n}\Ks(y)e^{\displaystyle v_n(y)}\, d\mu_n \geq 4\pi(1+\alpha_n)(1+{\tfrac{1}{\sqrt{\overline\sigma}}})-C(L_n)^{-{\frac{(1+\alpha_n)\delta^2}{\sqrt{\overline\sigma}}}}-CM_n^{-1}
\]
and by recalling that $L_n=\rho e^{\frac{M_n}{2(1+\alpha_n)}}$, we estimate the right hand side as in (\ref{dis1}). Therefore we have proved both (\ref{dis1}) and (\ref{dis2}).\hfill \\
\medskip

We are ready to obtain the contradiction to (\ref{assurdo}). Indeed, by (\ref{stimamax}), (\ref{dis1}) and (\ref{dis2}) we conclude that, for $n$ large enough,
\begin{eqnarray}
 \nonumber
 M_n &\geq& \int_{|y-\xndn|\leq l_n}\Big(\tfrac{M_n}{4\pi(1+\alpha_n)}-\tfrac{1}{2\pi}\log\big(\tfrac{|y-\xndn|}{\rho}\big)\Big)|y|^{2\alpha_n}\overline{K}_{n}(y)e^{\displaystyle v_n(y)}\,dy + p_n \\ \nonumber
 &\geq&(1+\tfrac{1}{\sqrt{\overline\sigma}})M_n- C +\underset{\O}{\inf}\, u_n,
\end{eqnarray}
for a positive constant $C$, eventually implying that,
\[
 \tfrac{1}{\sqrt{\overline\sigma}}u_n(x_n) +\underset{\O}{\inf} \,u_n\leq C,
\]
which is a contradiction to (\ref{assurdo}). This concludes the proof of case (I).
\hfill\\

\section{Remarks and open problem}\label{Remarks and}

\begin{remark}\label{remark1}
 The proof actually shows that in case (I)  the ``$\sup+\inf$'' inequality \eqref{supinf} holds with the value $\sigma ={\overline\sigma}$. Indeed, let us define $\mathcal{U}$ to be the set of all solutions of (\ref{maineq}) with potential $K$ satisfying (\ref{Kbasic}), (\ref{K1}) and $\alpha\in (-1,0)$. Let $A$ be a compact set in $\Omega$ and, for $u\in \mathcal{U}$, let
 \[
 M_{u}:=\underset{A} \sup\, u, \hspace{1cm} \delta_u:= e^{-\frac{M_u}{2(1+\alpha)}},
 \]
 \[
A_u:=\{x\in A\,|\, u(x)=M_u \},
 \]
 \[
  m_u:=\underset{x\in A_u}\sup |x|,
\hspace{1cm}
R_u:=\tfrac{m_u}{\delta_u}.
\]
 Now, let us define $\mathcal{F}_C:=\{u\in \mathcal{U}\,|\, R_u\leq C \}$. With these definitions, the proof of case (I) shows
 that for any $C>0$ and for any $u\in \mathcal{F}_C $,
 \[
  \tfrac{1}{\sqrt{\overline\sigma}}\underset{A}\sup\, u +\underset{\O}{\inf} \, u\leq \tilde C,
 \]
for a certain $\tilde C$, which does not depend on $u$.

 \end{remark}
 
\begin{remark} \label{remark2}
 Similarly, the proof actually shows that in subcase (i)  the ``$\sup+\inf$'' inequality \eqref{supinf} holds with the value $\sigma ={\overline\sigma}$.
 Indeed, by using the same definitions of Remark \ref{remark1}, for $u\in\mathcal{U}$, let us consider $\hat x_u\in A_u$ such that $|\hat x_u|=m_u$. Then, let us define
\[
\tau_u:=\tfrac{(\delta_u)^{1+\alpha}}{(m_u)^\alpha},
\]
\[
 l_u:=\sup\Big\{l\leq\tfrac{\rho}{\tau_u}\,|\, \small\int_{|x-\hat x_u|\leq l\tau_u} |x|^{2\alpha}K(x)e^{\dis u(x)}\,dx\leq 4\pi(1+\tfrac{1}{\sqrt{\overline\sigma}}) \Big\},
\]
\[
 \hat R_u:=\tfrac{\tau_u}{m_u}l_u.
\]
 Now, let us define $\hat{\mathcal{F}}_C:=\{u\in \mathcal{U}\,|\, \hat R_u\leq C \}$. Hence, the proof of subcase (i) shows that for any $C>0$ and $u\in \hat{\mathcal{F}}_C $,
 \[
  \tfrac{1}{\sqrt{\overline\sigma}}\underset{A}\sup\, u +\underset{\O}{\inf} \, u\leq \tilde C,
 \]
for a certain $\tilde C$, which does not depend on $u$.
\end{remark}

\begin{remark}\label{goodclass}
Based on the estimate \eqref{dis7casocattivo}, it makes sense to ask whether or not the following quantity,
\begin{equation}
 \label{formaindeterminata}
 \left(\frac{|x_n|}{\tau_n}\right)^{-\kappa}|\log(\tau_n)|,
\end{equation}
is bounded as far as $\kappa\in(0,1)$. Indeed, letting $\kappa=\tfrac{\delta^2}{\sqrt{\overline\sigma}}$, if  \eqref{formaindeterminata} was bounded by a uniform constant, then we would deduce from \eqref{dis7casocattivo} that \eqref{dis7casobuono} holds for a certain constant $C$ and the contradiction would arise as above.\\
In particular, there are two types of natural conditions which would ensure that a contradiction would arise at this stage.
The first one is that \eqref{dispeggio} would be satisfied with $\sigma=\overline{\sigma}$, that is, the "missing mass"
$4\pi(1+{\tfrac{1}{\sqrt{\overline\sigma}}})-\int_{|y|\leq l_n}\sing \Ks(y)e^{\displaystyle v_n(y)}\,dy$ should be of order at most $|\log(\tau_n)|^{-1}$.
More exactly, the proof of the second subcase (ii) shows that for every $u\in\mathcal{U}$ (see Remark \ref{remark2}) for which
 $l_u$ satisfies,
 \[
  \small\int_{|x-\hat x_u|\leq l_u\tau_u} |x|^{2\alpha}K(x)e^{\dis u(x)}\,dx\geq 4\pi(1+\tfrac{1}{\sqrt{\overline\sigma}}),
 \]
then the ``$\sup+\inf$'' inequality holds with $\sigma=\overline\sigma$.

However, a stronger but interesting sufficient condition can be described as follows.
From the hypothesis of case (II), namely $\tfrac{|x_n|}{\delta_n}\to\infty$, we are just able to deduce that,
\begin{equation}\label{azs}u_n(x_n)=-2(1+\alpha_n)\log(|x_n|)+c_n,
\end{equation}
 for some unknown $c_n\to +\ii$. Let us assume a little bit more, that is,
\begin{equation}
 \label{conditionsharp}
 u_n(x_n)\geq-2(1+\alpha_n)(1+\epsilon_0)\log(|x_n|),
\end{equation}
for some $\epsilon_0>0$, which, in turn, by elementary arguments, is equivalent to
$$
\left(\frac{|x_n|}{\tau_n}\right)\geq (\tau_n)^{-\epsilon_1},
$$
for some $\epsilon_1>0$.
Now, by using the latter condition, we would deduce that,
\[
 \frac{|\log(\tau_n)|}{\left(\frac{|x_n|}{\tau_n}\right)^{\kappa}}\leq \frac{1}{\epsilon_1} \frac{|\log(\frac{|x_n|}{\tau_n})|}{\left(\frac{|x_n|}{\tau_n}\right)^{\kappa}}\leq \tfrac{1}{\epsilon_1}C,
\]
for $n$ sufficiently large. In view of \eqref{azs}, \eqref{conditionsharp}, this fact suggests that the class of solutions for which the ``$\sup+\inf$'' inequality  with $\sigma=\overline\sigma$ could be possibly not satisfied is in fact rather thin. \\

\end{remark}
\medskip
\newtheorem{OpenProblem}{Open Problem}

\begin{OpenProblem} \hfill \\
 Is it true that, under the assumptions of Theorem \ref{teo1.intro}, the ``$\sup+\inf$'' inequality is satisfied  with $\sigma=\overline\sigma$?
\end{OpenProblem}
 This would be a full generalization of the result of Chen-Lin (\cite{CL}) to the case of conical singularities with bounded potentials satisfying \eqref{Kbasic}, \eqref{K1}.

\hfill\\

\section{An explicit example and a two-dimensional ``$\sup \times \inf$'' inequality}\label{Example}
We want to present an explicit example about the sharp ``$\sup+\inf$'' inequality. \\
Let $\O=B_1$ and $A\subset\O$ be a compact set which contains $0$. Let $a<b$ be two positive constants and $\alpha\in(-1,0]$. Let consider the following sequence of functions

\begin{equation}
     u_n(z)=\begin{cases}
        \log\Big(\frac{8(1+\alpha)^2b^{-1}n^{2(1+\alpha)}}{(1+n^{2(1+\alpha)}|z|^{2(1+\alpha)})^2}\Big), & \text{if } |z|<\tfrac{1}{n}, \\
        \log\Big(\frac{8(1+\alpha)^2b^{-1}n^{2(1+\alpha)\sqrt{\frac{a}{b}}}|z|^{2(1+\alpha)(\sqrt{\small\frac{a}{b}}-1)}}{\big(1+n^{2(1+\alpha)\sqrt{\frac{a}{b}}}|z|^{2(1+\alpha)\sqrt{\frac{a}{b}}}\big)^2}\Big), & \text{if } |z|\in[\tfrac{1}{n},1],
    \end{cases}
\end{equation}
which are solutions of the following equation
\[
 -\D u_n(z)=|z|^{2\alpha}K_n(z)e^{\dis u_n(z)}, \,\,\,\,\,\, \text{in}\,|z|\leq1,
\]
where
 \begin{equation}
     K_n(z)=\begin{cases}
        b, & \text{if } |z|<\tfrac{1}{n}, \\
        a, & \text{if } |z|\in[\tfrac{1}{n},1].
    \end{cases}
\end{equation}
The sequence $K_n$ satisfies both (\ref{Kbasic}) and (\ref{K1}). Indeed $a\leq K_n\leq b$ and
\[
 \frac{K_n(x)}{K_n(y)}\leq\frac{b}{a}=:{\overline\sigma},
\]
for every $|x|,|y|\leq 1$. Therefore, we remark that \eqref{K1} holds with ${\overline\sigma}>1$ and $B=0$. At this point, by using the fact that $u_n(x)=u_n(|x|)$ is radial and decreasing in $|x|$, we deduce that $\underset{A}{\sup}\, u_n=u_n(0)$ and $\underset{B_1}{\inf}\, u_n$ is attained at $|z|=1$. Thus,
\begin{align} \nonumber
 \frac{1}{\sqrt{{\overline\sigma}}}\underset{A}{\sup}\, u_n+\underset{B_1}{\inf}\, u_n&= \sqrt{\frac{a}{b}}u_n(0)+\underset{B_1}{\inf}\, u_n \\ \nonumber
 &= \log\Big(\frac{\big(8(1+\alpha)^2b^{-1}\big)^{(\sqrt{\frac{a}{b}}+1)}n^{4(1+\alpha)\sqrt{\frac{a}{b}}}}{\big(1+n^{2(1+\alpha)\sqrt{\frac{a}{b}}}\big)^2}\Big) \\ \nonumber
 &\leq \log\Big(\big(8(1+\alpha)^2b^{-1}\big)^{(\sqrt{\frac{a}{b}}+1)}\Big) \\ \nonumber
 &=(\sqrt{\frac{a}{b}}+1)\log\big(8(1+\alpha)^2b^{-1}\big)=C,
\end{align}
where $C$ is a constant which does not depend on $n$.
\hfill \\
Moreover, let us define
\[
 v_n(z)=u_n(\tfrac{z}{n})-u_n(0)=\begin{cases}
        \log\Big(\frac{1}{(1+|z|^{2(1+\alpha)})^2}\Big) ,& \text{if } |z|<1,\\
        \log\Big(\frac{|z|^{2(1+\alpha)(\sqrt{\small\frac{a}{b}}-1)}}{\big(1+|z|^{2(1+\alpha)\sqrt{\frac{a}{b}}}\big)^2}\Big), & \text{if } |z|\in[1,n].
    \end{cases}
\]
As $n\to+\infty$, we have that,
\[
 v_n(z)\longrightarrow U_{\alpha,a,b}(z)-\log(4(1+\alpha)b^{-1}),
\]
where 
\begin{equation}\label{specificfunction}
 U_{\alpha,a,b}(x)=\begin{cases}
        \log\Big(\frac{8(1+\alpha)^2b^{-1}}{1+|x|^{2(1+\alpha)})^2}\Big), & |x|<1,\\
        \log\Big(\frac{8(1+\alpha)^2b^{-1}|x|^{2(1+\alpha)(\sqrt{\small\frac{a}{b}}-1)}}{\big(1+|x|^{2(1+\alpha)\sqrt{\frac{a}{b}}}\big)^2}\Big), & |x|\in[1,+\infty),
    \end{cases}
\end{equation}

is the function which realizes the optimal total curvature (see for details \cite{b}). \\
At last, we remark that in the case $a=b=1$ and $\alpha=0$, we obtain the same constant, $\log(64)$, which was found by Shafrir in \cite{sh}.

\hfill\\

\medskip

Now, we shortly discuss a geometric application of the ``$\sup+\inf$'' inequality.

\begin{definition}
Let $S$ be a Riemann surface and $P_0\in S$ be an interior point.
 A metric $\tilde{g}_S$ on $S\setminus\{P_0\}$ is said to have a conical singularity of order $\alpha\in(-1,0)$ at $P_0$ if  there exist local coordinates $z(P)\in\O\subset\mathbb{C}$ and $u\in C^0(\O)\cap W_{loc}^{2,p}(\O\setminus\{0\})$, for any $p\in[1,+\infty)$, such that $z(P_0)=0$ and
 \[
  g_s(z)=|z|^{2\alpha}e^{\displaystyle u(z)}|dz|^2, \,\,\,\,\,\, z\in\O,
 \]
 where $g_s$ is the local expression of $\tilde g_s$.
 Then, let us set $\rho_S(z)=|z|^{2\alpha}e^{\displaystyle u(z)}$ the singular (local) conformal factor and $\rho_{S,0}(z)=e^{\dis u(z)}$ its regular part.
\end{definition}
These sort of singular metrics naturally arise in the framework of singular surfaces, see \cite{brej} and references therein.
\hfill\\
Now let us consider a Riemann surface $S$ and a metric $g_S$ for which the Gaussian curvature is any function satisfying \eqref{Kbasic} and \eqref{K1}.
We observe that, for any compact $A\Subset\O$, for which $0\in A$,
\[
\Big(\underset{A}\sup\, \rho_{S}\Big)\Big(\underset{\O}\inf \,\rho_{S}\Big)=+\infty.
\]

However, in suitable local coordinates, we have (Example 1, \cite{brej}),
 \[
  -\D u=|z|^{2\alpha}K(z)e^{\displaystyle u(z)}, \,\,\,\, z\in\O,
 \]
 with $u\in W^{2,q}(\O)\cap W_{loc}^{2,p}(\O\setminus\{0\})$ for any $q\in [1,\tfrac{1}{|\alpha|})$, for any $p\in [1,+\infty) $. By a straightforward application of Theorem \ref{teo1.intro}, we deduce a relevant property of the regular part of the metric, $\rho_{S,0}$.

\begin{theorem}
 For any relatively compact subset $A\Subset\O$ for which $0\in A$, and for every $\sigma>\overline\sigma$, there exists a constant $C>1$ such that
 \[
  \Big(\underset{A}\sup\, \rho_{S,0}\Big)\Big(\underset{\O}\inf \,\rho_{S,0}\Big)^{\sqrt{\sigma}}\leq C.
 \]

\end{theorem}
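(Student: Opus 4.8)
The plan is to read the statement off Theorem \ref{teo1.intro} after an elementary change of unknown. By the preceding definition, in suitable local coordinates centred at the conical point the singular conformal factor is $\rho_S(z)=|z|^{2\alpha}e^{u(z)}$ with $u\in C^0(\Omega)\cap W^{2,p}_{loc}(\Omega\setminus\{0\})$, and, as recalled just above the statement (Example 1, \cite{brej}), the prescribed Gaussian curvature equation for $g_S$ takes, in such coordinates, the form
\[
-\Delta u=|z|^{2\alpha}K(z)e^{u(z)},\qquad z\in\Omega,
\]
where $K$ is the Gaussian curvature of $g_S$, hence, by hypothesis, satisfies \eqref{Kbasic} and \eqref{K1}. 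Thus $u$ is a solution of \eqref{maineq} in the sense of the introduction, and Theorem \ref{teo1.intro} applies to it, with $I\subset(-1,0]$ any compact interval containing the given $\alpha$ (for instance $I=\{\alpha\}$).

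Next I would convert the product estimate into the ``$\sup+\inf$'' estimate. Since $\rho_{S,0}(z)=e^{u(z)}$ and $t\mapsto e^{t}$ is continuous and increasing, for any relatively compact $A\Subset\Omega$ with $0\in A$ we have
\[
\underset{A}{\sup}\,\rho_{S,0}=\exp\big(\underset{A}{\sup}\,u\big),\qquad
\underset{\Omega}{\inf}\,\rho_{S,0}=\exp\big(\underset{\Omega}{\inf}\,u\big),
\]
the former being a finite positive number because $u\in C^0(\Omega)$ and $A$ is compact, and the latter a number in $[0,+\infty)$ (if it vanishes, i.e. $\underset{\Omega}{\inf}\,u=-\infty$, the asserted inequality is trivial). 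Taking logarithms,
\[
\log\!\Big[\Big(\underset{A}{\sup}\,\rho_{S,0}\Big)\Big(\underset{\Omega}{\inf}\,\rho_{S,0}\Big)^{\sqrt{\sigma}}\Big]=\underset{A}{\sup}\,u+\sqrt{\sigma}\,\underset{\Omega}{\inf}\,u\leq C_0,
\]
where the last inequality is precisely \eqref{supinf} of Theorem \ref{teo1.intro}, applied with the given $\sigma>\overline\sigma$, and $C_0=C_0(a,b,A,\Omega,\rho,\sigma,B,I)$ depends only on the stated data. Exponentiating, the claim follows with $C:=\max\{e^{C_0},2\}>1$.

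I do not expect any genuine obstacle here: the whole analytic difficulty is already packaged in Theorem \ref{teo1.intro}, and what remains is the standard reduction of the prescribed Gaussian curvature problem on a surface with a conical singularity to equation \eqref{maineq} — for which the reference \cite{brej} is invoked — together with the trivial passage between the multiplicative form $(\underset{A}{\sup}\,\rho_{S,0})(\underset{\Omega}{\inf}\,\rho_{S,0})^{\sqrt{\sigma}}$ and the additive form $\underset{A}{\sup}\,u+\sqrt{\sigma}\,\underset{\Omega}{\inf}\,u$.
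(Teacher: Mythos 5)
Your proposal is correct and coincides with the paper's (implicit) argument: the paper likewise reduces the claim to Theorem \ref{teo1.intro} via the local coordinate expression $\rho_S(z)=|z|^{2\alpha}e^{u(z)}$ and the Gauss equation from \cite{brej}, the rest being the exponentiation of the ``$\sup+\inf$'' inequality. Your explicit handling of the degenerate case $\inf_\O u=-\infty$ is a harmless extra detail not spelled out in the paper.
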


This inequality can be seen as a two-dimensional singular version of the $\sup \times\inf$ inequalities
which were first established in dimension $N\geq 3$ in the context of the Yamabe problem (\cite{Schoen}), see
also the work of Li and Zhang (\cite{lz}) and references quoted therein.

\hfill\\

\section{Appendix}\label{Appendixsection}

\subsection{A Suzuki-type lemma}\label{Suzuki}

Here we discuss a generalization of a lemma from \cite{suz2}.
\begin{lemma}\label{Suzukilemma}
Let $\O$ be an open bounded domain in $\R^2$ and let $w$ be a solution of
\beq\label{eqSuz}
\qquad -\D w \leq \lambda |x|^{2\a} \e{w}\qquad \text{in} \,\, \O,
\eeq
 with $\a\in (-1,0]$, $\lambda>0$. \\ Suppose that $0\in\O$ and $w\in C_{loc}^{1,\gamma}(\O\backslash \{0\})\cap W^{2,p}_{loc}(\O\backslash\{0\})\cap W^{2,q}_{loc}(\O)$, with $p\geq 1, q\in[1,\frac{1}{|\al|})$ and $\gamma\in (0,1)$. \\
 Then
\begin{equation} \label{dis}
	w(x) \leq \frac{1}{2\pi r}\int\limits_{\partial B_r(x)} w \, ds - 2\log\biggl\{1-\frac{\lambda}{2\beta_{\alpha,\brx}}\int\limits_{B_r(x)} |x|^{2\alpha}e^{\displaystyle w}\, dx\biggl\}_{+}
\end{equation}

holds for $B_r(x)\subset\subset\O$, where $\{\cdot\}_+=\max\{\cdot,0\}$ and
\begin{equation*}
\beta_{\alpha,\omega} = \begin{cases}
4\pi(1+\alpha) \quad&\text{if $(0,0)\in\brx$},\\
4\pi \quad&\text{if $(0,0)\in \O\backslash \overline{\brx}$}.
\end{cases}
\end{equation*}
\end{lemma}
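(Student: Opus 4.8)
\textit{A plan for the proof.}
The plan is to carry out, in the singular setting, the comparison argument underlying Suzuki's lemma, the new ingredient being the isoperimetric (Bol--Huber type) inequality for metrics with a conical point, i.e.\ Proposition \ref{Huber.App.prop}. First I would dispose of the trivial case: if $\frac{\lambda}{2\beta_{\alpha,\brx}}\int_{\brx}|x|^{2\alpha}e^{w}\,dx\ge 1$ the right-hand side of \eqref{dis} is $+\infty$ and nothing is to be proved, so from now on I assume $m(r)<2\beta_{\alpha,\brx}$, where $m(\rho):=\int_{B_{\rho}(x)}\lambda|y|^{2\alpha}e^{w(y)}\,dy$. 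Let $z$ be the solution of $-\Delta z=\lambda|y|^{2\alpha}e^{w}$ in $\brx$ with $z=0$ on $\partial\brx$; this is well defined and continuous because $\lambda|y|^{2\alpha}e^{w}\in L^{q}(\brx)$ for some $q>1$ (since $|\alpha|<1$ and $w\in C^{0}$). From $-\Delta(w-z)=-\Delta w-\lambda|y|^{2\alpha}e^{w}\le 0$ the function $w-z$ is subharmonic, hence $w(x)\le\frac{1}{2\pi r}\int_{\partial\brx}w\,ds+z(x)$; writing the Green function of $\brx$ with pole at the center and passing to polar coordinates, $z(x)=\frac{1}{2\pi}\int_{\brx}\log\frac{r}{|y-x|}\,\lambda|y|^{2\alpha}e^{w}\,dy=\int_{0}^{r}\frac{m(\rho)}{2\pi\rho}\,d\rho$. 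Thus the whole matter reduces to showing $\int_{0}^{r}\frac{m(\rho)}{2\pi\rho}\,d\rho\le-2\log\!\bigl(1-\tfrac{m(r)}{2\beta_{\alpha,\brx}}\bigr)$.

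Next I would extract a differential inequality for $\rho\mapsto m(\rho)$ from the subsolution property. Consider the conformal metric $g=\lambda|y|^{2\alpha}e^{w}\,|dy|^{2}$: since $-\Delta w\le\lambda|y|^{2\alpha}e^{w}$, its Gaussian curvature satisfies $K_{g}\le\frac12$ on $\Omega\setminus\{0\}$, and $g$ has a conical singularity of order $\alpha$ at $0$, so $A_{g}(B_{\rho}(x))=m(\rho)$. By Cauchy--Schwarz, $\rho\,m'(\rho)=\rho\int_{\partial B_{\rho}(x)}\lambda|y|^{2\alpha}e^{w}\,ds\ge\frac{1}{2\pi}L_{g}(\partial B_{\rho}(x))^{2}$, and, applying Proposition \ref{Huber.App.prop} to the simply connected disk $B_{\rho}(x)$ (whose $g$-area $m(\rho)\le m(r)<2\beta_{\alpha,\brx}\le 2\beta_{\alpha,B_{\rho}(x)}$ keeps the estimate non-vacuous), $L_{g}(\partial B_{\rho}(x))^{2}\ge m(\rho)\bigl(\beta_{\alpha,B_{\rho}(x)}-\tfrac12 m(\rho)\bigr)$, where $\beta_{\alpha,B_{\rho}(x)}$ equals $4\pi(1+\alpha)$ if $0\in B_{\rho}(x)$ and $4\pi$ otherwise. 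Hence $\dfrac{m(\rho)}{2\pi\rho}\le\dfrac{m'(\rho)}{\beta_{\alpha,B_{\rho}(x)}-m(\rho)/2}$ for a.a.\ $\rho\in(0,r)$; the necessary a.e.\ differentiability of $m$ and the coarea computation are justified with the available $W^{2,q}_{loc}$, $q<1/|\alpha|$, regularity as in the rest of the paper.

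Finally I would integrate, recalling $m(0^{+})=0$. If $0\notin\overline{\brx}$ then $\beta_{\alpha,B_{\rho}(x)}\equiv 4\pi$ and
\[
z(x)=\int_{0}^{r}\frac{m(\rho)}{2\pi\rho}\,d\rho\le\int_{0}^{r}\frac{m'(\rho)}{4\pi-m(\rho)/2}\,d\rho=-2\log\frac{4\pi-m(r)/2}{4\pi}=-2\log\Bigl(1-\frac{m(r)}{8\pi}\Bigr),
\]
which is \eqref{dis} with $\beta_{\alpha,\brx}=4\pi$. If instead $0\in\brx$, I would split the integral at $\rho=|x|$, where $\beta_{\alpha,B_{\rho}(x)}$ jumps from $4\pi$ to $4\pi(1+\alpha)$, getting $z(x)\le 2\log\frac{4\pi}{4\pi-m(|x|)/2}+2\log\frac{4\pi(1+\alpha)-m(|x|)/2}{4\pi(1+\alpha)-m(r)/2}$, and then observe that this is $\le 2\log\frac{4\pi(1+\alpha)}{4\pi(1+\alpha)-m(r)/2}=-2\log\!\bigl(1-\tfrac{m(r)}{8\pi(1+\alpha)}\bigr)$ — an elementary inequality that holds \emph{precisely} because $1+\alpha\le 1$. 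Together with the first step this yields \eqref{dis} with $\beta_{\alpha,\brx}=4\pi(1+\alpha)$.

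The step I expect to be the main obstacle is the second one, together with the gluing in the third: invoking the conical isoperimetric inequality on disks that are \emph{not} centered at the singularity, and tracking the constant through the annulus $\{|x|\le\rho\le r\}$ where the cone is already enclosed. It is exactly there that the hypothesis $\alpha\le 0$ is used in an essential way, and any slip would produce the wrong (non-sharp) constant. The remaining points — the representation formula, the passage to polar coordinates, and the a.e.\ differentiation of $m(\rho)$ under the limited regularity near $0$ — are routine but must be stated with care.
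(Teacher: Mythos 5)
Your overall architecture is different from the paper's: you represent $w(x)-\frac{1}{2\pi r}\int_{\partial\brx}w$ by the Green's function of $\brx$, reduce everything to the radial quantity $m(\rho)=\int_{B_\rho(x)}\lambda|y|^{2\alpha}e^{w}$, and close the argument with a differential inequality in $\rho$. The paper instead decomposes $w=h_0+h_-+u$ with $h=h_0+h_-$ subharmonic and $u$ a nonnegative solution of $-\D u=\lambda|y|^{2\alpha}e^{h}e^{u}$ vanishing on $\partial\brx$, and then runs a Bandle--Suzuki rearrangement in the level parameter $t$ of $u$ (the functions $m(t)$, $\mu(t)$ and the monotone quantity $j(t)$). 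These are genuinely distinct routes, and yours would be shorter -- \emph{if} its key input were available.

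That input is the gap. In your second step you claim that Proposition \ref{Huber.App.prop} applied to $B_\rho(x)$ yields
\[
L_g(\partial B_\rho(x))^2\;\geq\; m(\rho)\Bigl(\beta_{\alpha,B_\rho(x)}-\tfrac12\,m(\rho)\Bigr),
\]
but Proposition \ref{Huber.App.prop} is the Huber inequality $L^2\geq\beta_{\alpha,\O}\,A$ for metrics $e^{h}V_0$ whose regular part $h$ is \emph{subharmonic} (i.e.\ nonpositively curved conical metrics); it contains no quadratic correction $-\tfrac12 A^2$, and it does not apply to $g=\lambda|y|^{2\alpha}e^{w}|dy|^2$, since $w$ only satisfies $-\D w\leq\lambda|y|^{2\alpha}e^{w}$ and is not subharmonic. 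What you actually need is the singular Alexandrov--Bol inequality for conical metrics of curvature $\leq\tfrac12$; that is a strictly stronger statement, nowhere proved or quoted in the paper, and the whole purpose of the paper's auxiliary decomposition and level-set argument (inequalities \eqref{diffmSuz}--\eqref{dis6}) is precisely to \emph{derive} the Bol-type conclusion from the weaker Huber inequality, by applying Huber to the metric $e^{h}V_0$ on the superlevel sets $\o(t)$ where $h$ \emph{is} subharmonic. With only Proposition \ref{Huber.App.prop} your ODE degenerates to $2\pi\rho\,m'(\rho)\geq\beta\,m(\rho)$, whose integral gives $z(x)\leq m(r)/\beta$ rather than $-2\log(1-m(r)/(2\beta))$, so the stated conclusion does not follow. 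The remaining ingredients of your plan (the Green's representation $z(x)=\int_0^r\frac{m(\rho)}{2\pi\rho}\,d\rho$, the Cauchy--Schwarz step $2\pi\rho\,m'(\rho)\geq L_g^2$, and the splitting of the integral at $\rho=|x|$ using $1+\alpha\leq1$) are correct, so the proof would be complete upon supplying a proof or a proper reference for the singular Bol inequality -- or upon replacing that step by the paper's construction of $u$ and $h$.
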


\begin{proof}\hfill
By the Sobolev Embedding Theorem, $w\in C^{0,\kappa}(\ov \brx)$, for $\kappa\in(0,1)$ that depends on $\alpha$.
Let us set,
$$
f:=-\Delta w-\lambda |x|^{2\a} \e{w}\leq 0 \mbox{ in }\brx,
$$
By the regularity of $w$, $f\in L^q(B_r(x))$, so we can consider the function $h_-$, that is the unique solution of
$-\D h_-=f \mbox{ in }\brx,\, h_-=0 \mbox{ on } \pa\brx$. Next, let $h_0$ be the harmonic lifting
of $w$ on $\p \brx$, that is $\D h_0=0$ in $\brx$, $h_0=w$ on $\p\brx$. Since $w\in C_{loc}^{1,\gamma}(\O\backslash \{0\})\cap W^{2,p}_{loc}(\O\backslash\{0\})\cap W^{2,q}_{loc}(\O)$, then, by standard elliptic
theory (\cite{GT}), $h_-$ and $h_0$ are unique, $h_-$ is a subharmonic function of class $ h_-\in C_{loc}^{1,\gamma}(\brx\backslash \{0\})\cap W^{2,p}_{loc}(\brx\backslash\{0\})\cap W^{2,q}_{loc}(\brx)$
and $h_0\in C^2(\brx)\cap C^{0}(\ov\brx)$.
To simplify the notations let us set,
$$
h=h_0+h_-\mbox{ in }\brx.
$$
At this point, we define $u=w-h$, which satisfies,
\beq\label{ineqSuz}
-\D u= \lambda |x|^{2\a}e^{\displaystyle h}e^{\displaystyle u} \mbox{ in }\brx,\quad u=0 \mbox{ on } \p\brx.
\eeq
Clearly $u\in C_{loc}^{1,\gamma}(\brx\backslash \{0\})\cap W^{2,p}_{loc}(\brx\backslash\{0\})\cap W^{2,q}_{loc}(\brx)$, $u\geq 0$ in $\brx$ and we define,
$$
\o(t)=\{x\in\brx\,:\,u >t\}, \quad \gamma(t)=\{x\in\brx\,:\,u=t\}, \quad t\in [0, t_+],
$$
where $t_+=\max\limits_{\ov \brx} u$, and

$$
m(t)=\int\limits_{\o(t)}\lambda |x|^{2\a}e^{\displaystyle h}e^{\displaystyle u}dx,\qquad \mu(t)=\int\limits_{\o(t)}\lambda |x|^{2\a}e^{\displaystyle h}dx.
$$
Since $u$ satisfies \eqref{ineqSuz}, then by Lemma \ref{levelsetlemma} the level sets have vanishing two dimensional area $|\gamma(t)|=0$ for any $t\in [0,t_+]$. In particular, the arguments in  Lemma \ref{levelsetlemma} show that $\{x\in\brx|\grad u(x)=0\}\cap u^{-1}([0,t_+])$ is of measure zero and by the co-area formula in (\cite{BW}) we have that $m(t)$ and $\mu(t)$ are absolutely continuous in $[0,t_+]$.

By the co-area formula and the Sard Lemma we have,
\begin{equation}\label{diffmSuz}
-m^{'}(t)=\int\limits_{\gamma(t)}\frac{\lambda |x|^{2\a}e^{\dis h}e^{ \dis u}}{|\nabla u |}dl=
e^{\dis t}\int\limits_{\gamma(t)}\frac{\lambda |x|^{2\a}e^{ \dis h}}{|\nabla u |}dl=
e^{\dis t}(-\mu^{'}(t)),
\end{equation}
for a.a. $t\in [0, t_+]$, and, in view of \rife{ineqSuz},
\begin{equation}\label{diffemSuz}
m(t)=-\int\limits_{\o(t)}\D u=\int\limits_{\gamma(t)}|\nabla u|,
\end{equation}
for a.a. $t\in [0, t_+]$. By the Schwarz inequality and using a generalization of Huber's inequality (see Proposition \ref{Huber.App.prop} in the appendix) we find that,
$$
-m^{'}(t)m(t)=
\int\limits_{\gamma(t)}\frac{\lambda |x|^{2\a}e^{\displaystyle h}e^{\displaystyle u}}{|\nabla u |}dl\int\limits_{\gamma(t)}|\nabla u|dl=
e^{\dis t}\int\limits_{\gamma(t)}\frac{\lambda |x|^{2\a}e^{\displaystyle h}}{|\nabla u |}dl\int\limits_{\gamma(t)}|\nabla u|dl\geq
$$
\beq\label{hub2Suz}
e^{\dis t}\left(\int\limits_{\gamma(t)}{\lambda^{\frac{1}{2}} |x|^{\a}e^{h/2}} dl\right)^2\geq \beta_{\alpha,\omega(t)} e^{\dis t}\mu(t),
\mbox{ for a.a. } t\in [0, t_+],
\eeq
where $\beta_{\alpha,\omega(t)}$ is equal to $4\pi(1+\alpha)$ if $(0,0)\in\omega(t)$ or $(0,0)$ belongs to the interior part of the bounded components of $\R^2\backslash\omega(t)$ and it equals $4\pi$ in the other cases. 
By simple calculations we can check that the inequality still holds for a general $\omega(t)$.
Therefore we conclude that,
\begin{equation}\label{diff1Suz}
\frac{1}{2\beta_{\alpha,\omega(t)}}(m^2(t))^{'}+e^{\dis t}\mu(t)\leq 0,\mbox{ for a.a. } t\in [0, t_+].
\end{equation}

In particular, because of \eqref{diffmSuz}, we conclude that, for a.a. $ t\in [0, t_+]$,
\begin{equation}
 \label{disb}
\left(\frac{1}{2\beta_{\alpha,\omega(t)})}m^2(t)-m(t)+e^{\dis t}\mu(t)\right)^{'}=\frac{1}{2\beta_{\alpha,\omega(t)}}(m^2(t))^{'}+e^{\dis t}\mu(t)\leq 0.
\end{equation}

However, as mentioned above, the quantity in the parentheses in the l.h.s. of this inequality is continuous and absolute continuous in $[0, t_+]$, and then we also conclude that, for any $t\in[0, t_+]$
\[
 m(t)-e^{\dis t}\mu(t)\leq \frac{m^2(t)}{2\beta_{\alpha,\omega(t)}},
\]
where we used also the fact that $ m(t_+)=0$ and $ \mu(t_+)=0.$
We rewrite the previous inequality as follows,
\begin{equation}\label{formSuz}
 \mu(t)\geq \frac{m^2(t)}{e^{\dis t}}\bigg(\frac{1}{m(t)}-\frac{1}{2\beta_{\alpha,\omega(t)})}\bigg)
\end{equation}
and define
\[
 j(t)=\frac{\mu(t)}{m(t)}-\frac{\mu(t)}{2\beta_{\alpha,\omega(t)}}.
\]
This function is absolutely continuous in $[0,t]$, for all $t<t_+$, and we notice that

 \[j'(t)=\mu'(t)\biggl(\frac{1}{m(t)}-\frac{1}{2\beta_{\alpha,\omega(t)}}\biggl)-\mu(t)\frac{m'(t)}{m^2(t)}\geq\]

\begin{equation}
 \label{dis3}
 \geq \frac{\mu'(t)e^{\dis t}\mu(t)}{m^2(t)}-\mu(t)\frac{m'(t)}{m^2(t)}=0,
\end{equation}
for a.a. $t\in[0,t_+]$, where we used relations (\ref{diffmSuz}) and (\ref{formSuz}).
Hence, using again (\ref{formSuz}) we can conclude that
\[
 \lim\limits_{t\uparrow t_+} j(t)=\lim\limits_{t\uparrow t_+} \frac{\mu'(t)}{m'(t)}=\frac{1}{e^{\dis t_+}}\geq j(t)=\mu(t)\biggl(\frac{1}{m(t)}-\frac{1}{2\beta_{\alpha,\omega(t)}}\biggl)\geq
\]
\begin{equation}
 \label{dis4}
 \geq\frac{m^2(t)}{e^{\dis t}}\biggl(\frac{1}{m(t)}-\frac{1}{2\beta_{\alpha,\omega(t)}}\biggl)^2_{+},
\end{equation}
for every $t\in[0,t_+]$.
Substituing $t=0$ and remembering that $\omega(0)=\brx$ we have that
\begin{equation}
\label{dis5}
 \frac{1}{e^{\dis t_+}}\geq \biggl(1-\frac{m(0)}{2\beta_{\alpha,\brx}}\biggl)^2_{+}\geq \biggl(1-\frac{\int_{\brx} \lambda|x|^{2\alpha}e^{\displaystyle w}\, dx}{2\beta_{\alpha,\brx}}\biggl)^2_{+}.
\end{equation}

Then, we conclude that,
\begin{equation}
 \label{dis6}
  \biggl(1-\frac{\int_{\brx} \lambda|x|^{2\alpha}e^{\displaystyle w}\, dx}{2\beta_{\alpha,\brx}}\biggl)^{-2}_{+}\geq e^{\dis t_+}\geq e^{\dis u(x)}=e^{\dis w(x)}e^{\dis -h(x)}.
\end{equation}
Hence, by using the subharmonicity of $h$ in $\brx$ we have that,
\begin{eqnarray*}
 w(x) &\leq& h(x)-2\log \biggl\{ 1-\frac{\lambda}{2\beta_{\alpha,\brx}}\int\limits_{B_r(x)} |x|^{2\alpha}e^{\displaystyle w}\, dx\biggl\}_{+} \\
 &\leq& \frac{1}{|\partial\brx|}\int\limits_{\partial\brx}h\, dl -2\log \biggl\{ 1-\frac{\lambda}{2\beta_{\alpha,\brx}}\int\limits_{B_r(x)} |x|^{2\alpha}e^{\displaystyle w}\, dx\biggl\}_{+} \\
 &=& \frac{1}{|\partial\brx|}\int\limits_{\partial\brx}w\, dl -2\log \biggl\{ 1-\frac{\lambda}{2\beta_{\alpha,\brx}}\int\limits_{B_r(x)} |x|^{2\alpha}e^{\displaystyle w}\, dx\biggl\}_{+}
\end{eqnarray*}
and the assertion is proved. \\
\end{proof}

\subsection{Huber inequality}
We state a corollary of the Huber inequality (\cite{hu}), which is suitable to our applications, see also Theorem 5.2, \cite{b}.
We say that $h$ is subharmonic in $\O$, if $h\in W_{loc}^{2,q}(\O)$ for some $q>1$ and $-\D h(x)\leq 0$ for almost any $x\in\O$.

\begin{proposition}\label{Huber.App.prop} \hfill \\
Let $\O\subset\R^2$ be an open, bounded and smooth domain, and let $dl$ denote the arc-lenght on $\p \O$. Let $\Phi$ be a conformal map of $\O$ onto the unit ball $|\xi|<1$ and $k$ a real constant. Let $h$ be a subharmonic function in $\O$, $V_0(x)=|x|^{2\alpha}$, with $\alpha\in(-1,0]$, and assume that $(0,0)\notin\p\O$. Then: either $(0,0)$ belongs to the interior of the (possible multiply connected) bounded component of $\R^2\setminus\O$ and then
\[
 \left(\int_{\p\O}{(e^{\dis h}V_0})^{\frac{1}{2}}\,dl\right)^{2}\geq 4\pi(1+\alpha)\int_{\O}e^{\dis h}V_0\,dx
\]
or
\begin{equation}\label{Huber.App}
 \left(\int_{\p\O}(e^{\dis h}V_0)^{\frac{1}{2}}\,dl\right)^{2}\geq \beta_{\alpha,\O}\int_{\O}e^{\dis h}V_0\,dx,
\end{equation}

where
\[
 \beta_{\alpha,\O}=\left\{\begin{array}{cc}
 4\pi(1+\alpha), &\text{if}\,\,(0,0)\in\O, \\
 4\pi, &\text{if}\,\,(0,0)\notin\O.
 \end{array}
 \right.
\]
The equality holds in \eqref{Huber.App} if and only if $\O$ is simply connected and: either $(0,0)\in\O$ and $V_0(x_1+ix_2)=e^{\dis k}|\Phi'(z)\Phi^\alpha(z)|^2$, $\forall z=x_1+ix_2\in\O$ and $\Phi(0)=0$, or $ (0,0)\notin\overline\O$ and $V_0(x_1+ix_2)=e^{\dis k}|\Phi'(z)|^2$, $\forall z=x_1+ix_2\in\O$. \\
In particular, if $\O$ is not simply connected, then the inequalities are always strict.

\end{proposition}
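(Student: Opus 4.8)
The proof rests on the classical isoperimetric inequality of Huber (\cite{hu}) for conformal metrics of bounded integral curvature, applied to $g=e^{2\phi}|dz|^2$ on $\O$ with $2\phi=\log V_0+h=2\alpha\log|z|+h$; then the two sides of \eqref{Huber.App} are exactly (the square of) the length of $\partial\O$ and the area of $\O$ in $g$, up to the factor $\beta_{\alpha,\O}$. I would first check that $g$ meets the hypotheses of Huber's theorem and identify the curvature it sees. Since $\Delta\phi=\tfrac12\Delta h+2\pi\alpha\,\delta_{0}$, the atom coming from $\Delta\log|z|=2\pi\delta_{0}$, the Gauss curvature of $g$ is nonpositive away from the origin, while at the origin it carries a \emph{positive} atom of mass $-2\pi\alpha=2\pi|\alpha|$, and $2\pi|\alpha|<2\pi$ exactly because $\alpha\in(-1,0]$ — this is where the restriction $\alpha>-1$ enters. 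Writing $\kappa_{+}$ for the total positive part of the curvature of $g$ enclosed by $\partial\O$, we get $\kappa_{+}=2\pi|\alpha|$ when $(0,0)$ lies in $\O$ or, more generally, in a bounded complementary component of $\O$, and $\kappa_{+}=0$ when $(0,0)\notin\overline\O$. Huber's inequality then gives $L^{2}\geq 2\big(2\pi-\kappa_{+}\big)A$, i.e.\ \eqref{Huber.App} with $\beta_{\alpha,\O}=2(2\pi-2\pi|\alpha|)=4\pi(1+\alpha)$ in the first case and $\beta_{\alpha,\O}=4\pi$ in the second; the general (possibly multiply connected) form of Huber's inequality also contains the dichotomy about which complementary component of $\O$ contains $(0,0)$, and yields strict inequality whenever $\O$ is not simply connected (see also Theorem 5.2 in \cite{b}).

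For the equality case we may take $\O$ simply connected, the inequality being strict otherwise. Let $\Phi:\O\to\{|\xi|<1\}$ be the Riemann map, normalised so that $\Phi(0)=0$ when $(0,0)\in\O$, and let $\Psi=\Phi^{-1}$; since $\partial\O$ is smooth, $\Psi$ extends to a $C^{1}$ diffeomorphism of the closed disk with $\Psi'\neq 0$ on the closure (Kellogg--Warschawski), so $g$ may be transported to the disk. Its pulled-back conformal factor has exponent $h\circ\Psi+2\log|\Psi'|+2\alpha\log|\Psi|$; the term $2\log|\Psi'|$ is harmonic on the disk, and so is $2\alpha\log|\Psi|$ when $(0,0)\notin\overline\O$, whereas if $(0,0)\in\O$ one writes $\Psi(\xi)=\xi\,\Psi_{0}(\xi)$ with $\Psi_{0}$ holomorphic and nonvanishing and then $2\alpha\log|\Psi|=2\alpha\log|\xi|+(\text{harmonic})$. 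In either case $g$ becomes a conformal metric on the unit disk — in the first case with the extra conical factor $|\xi|^{2\alpha}$, i.e.\ living on the flat cone of angle $2\pi(1+\alpha)$ — whose curvature is nonpositive and whose potential, coming from $h\circ\Psi$, is subharmonic. Equality in Huber's inequality forces this model metric to be flat (a flat disk, resp.\ a flat cone), hence its potential harmonic, hence $h$ harmonic on $\O$; undoing the substitution identifies $V_{0}$ with the model conformal factor, namely $V_{0}(z)=e^{k}\,|\Phi'(z)\,\Phi(z)^{\alpha}|^{2}$ with $\Phi(0)=0$ when $(0,0)\in\O$, and $V_{0}(z)=e^{k}\,|\Phi'(z)|^{2}$ when $(0,0)\notin\overline\O$. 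This is the asserted characterisation, and in particular equality fails for every multiply connected $\O$.

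The step I expect to be the main obstacle is matching the conical singularity with the precise hypotheses and the sharp constant of Huber's inequality: one must invoke the version of \cite{hu} that allows a positive curvature atom of mass strictly below $2\pi$, which is exactly what produces the constant $2(2\pi-\kappa_{+})$, or else approximate $|z|^{2\alpha}$ from below by smooth radial conformal factors with curvature not exceeding that of the cone and pass to the limit in both $L$ and $A$. Keeping the boundary term under control in such an approximation, and the $C^{1}$-boundary regularity of the Riemann map needed to transport $L$, are the delicate technical points; the rest is bookkeeping with harmonic functions.
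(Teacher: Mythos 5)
The paper does not actually prove this proposition: it is stated in the appendix as a corollary of Huber's isoperimetric inequality \cite{hu}, with a pointer to Theorem 5.2 of \cite{b}, and no argument is given in the text. Your proposal reconstructs precisely the proof those references carry out: write the metric as $e^{2\phi}|dz|^2$ with $2\phi=h+2\alpha\log|z|$, observe that its curvature is nonpositive away from the origin (by subharmonicity of $h$) and carries a positive atom of mass $2\pi|\alpha|<2\pi$ at the origin, and invoke Huber's bound $L^2\geq 2(2\pi-\kappa^+)A$; the constants $4\pi(1+\alpha)$ and $4\pi$ come out exactly as you compute them, and the equality discussion via the Riemann map (with the Kellogg--Warschawski boundary regularity and the flat-cone model of angle $2\pi(1+\alpha)$) is the standard one. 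So the approach is the intended one and the bookkeeping is correct.

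One step is dispatched too quickly: the alternative in which $(0,0)$ lies in a bounded component of $\R^2\setminus\O$. There the curvature atom sits in a hole, outside the domain where $h$ is defined, so to count it in $\kappa^+$ you must pass to the simply connected hull $\hat\O$ and extend the conformal factor there; since $h$ lives only on $\O$, this requires an explicit extension (e.g.\ the harmonic extension of $h$ to each hole) together with a verification that the glued factor still has curvature of the right sign in the distributional sense across the inner boundaries, before comparing $L(\p\hat\O)\leq L(\p\O)$ and $A(\hat\O)\geq A(\O)$. This is exactly the content of the multiply connected case in Theorem 5.2 of \cite{b}, and saying that ``the general form of Huber's inequality contains the dichotomy'' does not by itself supply it. The other technical points you flag (approximating $|z|^{2\alpha}$ to fit the hypotheses of \cite{hu}, boundary regularity of $\Phi$) are indeed where the cited references do the work; in the body of the paper the proposition is only ever applied to level sets whose components are shown to be simply connected, so the incomplete case is not the one that matters for the applications.
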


\subsection{On the measure of level sets of solutions of Liouville-type equations}
We prove a lemma about the measure of the level sets of solutions of (possibly singular) Liouville type equations.

\begin{lemma}\label{levelsetlemma}
 Let $\alpha\in(-1,0]$ and $v$ be a solution of
 \begin{equation} \label{equazione}
  -\D v(z)=|z|^{2\alpha}K(z) e^{\displaystyle v(z)} \qquad \text{in} \,\,\, \O\subset\mathbb{R}^2,
 \end{equation}
 \[
   0<a\leq K(z)\leq b<+\infty,
 \]

with $v\in C_{loc}^{1,\gamma}(\O\backslash \{0\})\cap W_{loc}^{2,p}(\O\backslash\{0\})\cap W_{loc}^{2,q}(\O)\cap C_{loc}^{0,\kappa}(\O)$, where $\gamma\in(0,1)$, $k\in (0,k_0)$ for some $k_0\leq1$ which depends on $\alpha$, $p\geq1$  and $q\in [1,\frac{1}{|\alpha|})$.  \\
Let $t\in[\underset{\O}{\inf}\, v, \underset{\O}{\sup} \, v]$ and set $\Gamma_t=\{z\in\O\,| \, v(x)=t\}$  , then
\[
 |\Gamma_t|=0.
\]
\end{lemma}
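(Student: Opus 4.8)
The plan is to derive a contradiction from the equation whenever $\Gamma_t$ has positive Lebesgue measure, exploiting two classical facts about Sobolev functions: the weak gradient of a $W^{1,1}_{loc}$ function vanishes a.e.\ on each of its level sets, and hence, iterating this on the components of $\nabla v$, all the second weak derivatives of $v$ vanish a.e.\ on the set $\{\nabla v=0\}$. Since, by \eqref{equazione}, $\D v$ is strictly negative away from the origin, it cannot vanish on a positive-measure subset of $\Gamma_t\setminus\{0\}$, and this will force $|\Gamma_t|=0$.

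Concretely, I would first reduce to showing $|\Gamma_t\setminus\{0\}|=0$, since $\{0\}$ is Lebesgue-null (and if $\Gamma_t=\emptyset$ there is nothing to prove). Next recall that for any $w\in W^{1,1}_{loc}(\O)$ and any $c\in\R$ one has $\nabla w=0$ a.e.\ on $\{w=c\}$; this follows, for instance, from the identities $Dw^{+}=\chi_{\{w>0\}}Dw$ and $Dw^{-}=-\chi_{\{w<0\}}Dw$ a.e., which give $Dw=\chi_{\{w\neq 0\}}Dw$ (see, e.g., \cite{GT}), and then one applies this to $w-c$. Using it with $w=v$, $c=t$ yields $\nabla v=0$ a.e.\ on $\Gamma_t$. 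Since moreover $v\in W^{2,q}_{loc}(\O)$, each $\partial_i v$ lies in $W^{1,q}_{loc}(\O)$, so the same fact with $w=\partial_i v$, $c=0$ gives $\nabla(\partial_i v)=0$ a.e.\ on $\{\partial_i v=0\}$; intersecting over $i=1,2$, every weak derivative $\partial_{ij}v$, and in particular $\D v$, vanishes a.e.\ on $Z:=\{x\in\O:\nabla v(x)=0\}$. Combining the two steps and using $|\Gamma_t\setminus Z|=0$, we obtain $\D v=0$ a.e.\ on $\Gamma_t$.

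On the other hand, since $v$ is a distributional solution of \eqref{equazione} with $v\in W^{2,q}_{loc}(\O)$ and $q<1/|\alpha|$ (so that $|z|^{2\alpha}K(z)e^{v}\in L^{q}_{loc}(\O)$), an integration by parts shows that the equation holds pointwise a.e.: $\D v(z)=-|z|^{2\alpha}K(z)e^{v(z)}$ for a.e.\ $z\in\O$. On $\Gamma_t$ the right-hand side equals $-|z|^{2\alpha}K(z)e^{t}\leq -a\,e^{t}|z|^{2\alpha}$, which is strictly negative for $z\neq 0$ because $K\geq a>0$. Hence $\D v<0$ a.e.\ on $\Gamma_t\setminus\{0\}$. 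Comparing with $\D v=0$ a.e.\ on $\Gamma_t$ from the previous paragraph, the set $\Gamma_t\setminus\{0\}$ must be null, whence $|\Gamma_t|=0$.

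I do not anticipate a genuine obstacle here: the argument is essentially a combination of standard measure-theoretic properties of Sobolev functions with the strict sign of the nonlinearity. The only points that require a little attention are the precise statement of the level-set property and its iteration to the second-order derivatives, and the fact that the pointwise identity $\D v=-|z|^{2\alpha}Ke^{v}$ is valid throughout $\O$ up to the null set $\{0\}$ — which is exactly where the restriction $q\in[1,1/|\alpha|)$ enters, guaranteeing that the singular weight $|z|^{2\alpha}$ is locally $q$-integrable.
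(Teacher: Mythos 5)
Your argument is correct, and it takes a genuinely different route from the paper's. The paper first proves that the full critical set $G=\{\nabla v=0\}$ is Lebesgue-null by approximating $v$ with smooth functions $v_n$, using $-\Delta v>0$ a.e.\ to get a nondegenerate second derivative at each critical point, invoking the implicit function theorem to see that $\{\partial_x v_n=0\}$ is locally a $C^1$-graph, and passing to the limit; it then bounds $|\Gamma_t|$ via the coarea formula applied to the strip $\{t-\delta\le v\le t+\delta\}$ and lets $\delta\to0^+$. You instead apply the Stampacchia-type property of Sobolev functions twice — once to $v$ to get $\nabla v=0$ a.e.\ on $\Gamma_t$, and once to each $\partial_i v\in W^{1,q}_{loc}$ to get $\partial_{ij}v=0$ a.e.\ on $\{\nabla v=0\}$ — so that $\Delta v=0$ a.e.\ on $\Gamma_t$, which contradicts the a.e.\ identity $\Delta v=-|z|^{2\alpha}K e^{v}<0$ off the origin. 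Both proofs ultimately rest on the strict sign of $\Delta v$, but yours avoids the smooth approximation, the implicit function theorem, and the coarea formula, and is shorter; as a byproduct it also recovers the stronger fact the paper needs elsewhere (in the proof of the Suzuki-type lemma), namely that the whole critical set is null, since $\Delta v=0$ a.e.\ on $\{\nabla v=0\}$ is incompatible with $\Delta v\ne0$ a.e.\ except on a null set. The only steps worth spelling out in a final write-up are the two standard ingredients you already flagged: the level-set property $Dw=\chi_{\{w\ne c\}}Dw$ for $w\in W^{1,1}_{loc}$ (e.g.\ Lemma~7.7 in \cite{GT}), and the identification of the distributional Laplacian of a $W^{2,q}_{loc}$ function with the a.e.-defined sum of its second weak derivatives, which upgrades the distributional equation to a pointwise a.e.\ one.
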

\begin{proof}
First of all, we will show that $G=\{x\in \O\,|\, \nabla v(x)=0\}$ has zero measure. \\ Indeed, let us consider $E=\{-\Delta v \,\text{is well defined}\,\}$. Since $v\in W_{loc}^{2,q}(\R^2)$, then $|\O\setminus E|=0$. Moreover, for every $z\in\tilde{F}=E\backslash \{0\}$, $0<\D v(z)<+\infty$ and $|\O\setminus \tilde{F}|=0$. \\
Let consider a relatively compact set $U\subset\subset \O\backslash\{0\}$ and a sequence of functions $\{v_n\}\subset C_{loc}^{\infty}(\O)\cap W_{loc}^{2,p}(\O\backslash\{0\})$ such that
\[
 v_n\longrightarrow v \qquad \text{in}\,\, C^{1,\gamma}(U)\cap W^{2,p}(U).
\]
Then $D^2v_n\longrightarrow D^2 v$ for almost any $x\in U$. We will show that $|G|=0$, by showing that $|G\cap U|=0$ and using the arbitrariness of $U$. \\
Let $F=\tilde{F}\cap\{x\in U\,|\, D^2v_n\,\, \text{pointwise converges to}\,\, D^2v\}$. \\
Le $K$ be a relatively compact set in $F\cap G\cap U$ and let $z_0\in K$. Then $-\D v(z_0)>0$, so we can suppose, without loss of generality, that $\partial^2_{x}v(z_0)<0$ and, for $n>>0$, that $\partial^2_{x}v_n(z_0)<0$. \\
If $\partial_xv_n(z_0)\neq 0$, then there exists a radius $r_0$ such that  $$\{z\in K|\partial_{x}v_n(z)=0\}\cap B_{r_0}(z_0)=\varnothing.$$ Otherwise, by the Implicit Function Theorem, there exists a radius $r_0$ such that the set $$\{z\in K|\partial_{x}v_n(z)=0\}\cap B_{r_0}(z_0)$$ is a $C^1-$graph. In both cases, we deduce that $$|\{z\in K|\partial_{x}v_n(z)=0\}\cap B_{r_0}(z_0)|=0.$$ Moreover, $K\subset\underset{z_0\in K}\bigcup B_{r_0}(z_0)$, so by compactness $K\subset\underset{i\in I}\bigcup B_{r_i}(z_i)$, for a finite set $I$. Then,
\[
 K\cap \{z\in U|\partial_{x}v_n(z)=0\}\subset \underset{i\in I}\bigcup \Big( B_{r_i}(z_i)\cap\{z\in U|\partial_{x}v_n(z)=0\}\Big)
\]
and this implies that $$|K\cap\{z\in U|\nabla v_n(z)=0\}|\leq |K\cap\{z\in U|\partial_{x} v_n(z)=0\}|=0.$$ Using the dominated convergence theorem and the uniformly convergence of $v_n$ to $v$ in $C^1_{loc}$,
\[
 |K\cap\{z\in U|\nabla v(z)=0\}|\leq |K\cap\{z\in U|\nabla v_n(z)\leq\delta\}|\to 0,
\]
when $\delta\to0^+$. For the arbitrariness of $K$, we have that $|F\cap G\cap U|=0$ and, by observing that $|\O\setminus F|=0$, we obtain $|G\cap U|=0$.\\ \hfill \\
Now, by using the coarea formula (\cite{BW}), for $t\in (\underset{\O}{\inf}\, v, \underset{\O}{\sup} \, v)$, we have
\begin{align*}
 |\Gamma_t|&\leq\int_{\O} \chi_{\{t-\delta\leq v\leq t+\delta\}}(x)\,dx \\&=|\{\nabla v(x)=0\}\cap v^{-1}(t-\delta,t+\delta)|+\int_{-\delta}^{\delta}\,ds\,\big(\underset{\{v(x)=s\}}\int\tfrac{1}{|\nabla v(x)|}\,d\sigma(x)\big).
\end{align*}
Thus, as $\delta\to 0^+$, we obtain the desired conclusion whenever $t\in(\underset{\O}{\inf}\, v, \underset{\O}{\sup} \, v)$. \\
Otherwise, if $\hat t=\underset{\O}{\sup}\, v$, we have
\begin{align*}
 |\Gamma_{\hat t}|&\leq\int_{\O} \chi_{\{\hat t-\delta\leq v\leq \hat t\}}(x)\,dx \\
 &=|\{\nabla v(x)=0\}\cap v^{-1}(\hat t-\delta,\hat t)|+\int_{-\delta}^{0}\,ds\,\big(\underset{\{v(x)=s\}}\int\tfrac{1}{|\nabla v(x)|}\,d\sigma(x)\big).
\end{align*}
Thus, as $\delta\to0^+$, we obtain the desired conclusion in this case as well. Clearly the case $t=\underset{\O}{\inf}\, v$ follows in the same way.
\end{proof}

\maketitle

\
\section*{Acknowledgements}
We would like to express our sincere gratitude to Prof. Daniele Bartolucci for the illuminating and inspiring discussions concerning the topics of the paper.


\begin{thebibliography}{99}
\bibitem{Ban} 
C. Bandle, {\em Isoperimetric inequalities and applications}, Pitmann, London, 1980.

\bibitem{b0}
D. Bartolucci, {\em A ``$\sup+C\inf$'' inequality for the equation $-\Delta u = \tfrac{V}{|x|^{2\alpha}} e^u$}, Proc. Roy. Soc. Edinburgh {\bf 140A} (2010) 1119-1139.


\bibitem{b1} 
D. Bartolucci, {\em A ``$\sup + C\inf$'' inequality for Liouville-type equations with singular potentials}, Math. Nachr. {\bf 284} (2011), 1639-1651.

\bibitem{b2}
D. Bartolucci,  {\em A $\sup+\inf$ inequality for Liouville type equations with weights}, JAMA {\bf 117}, (2012) 29-46.

\bibitem{b}
D. Bartolucci,  {\em On the best pinching constant of conformal metrics on $\mathcal{S}^2$
with one and two conical singularities.}, {J. Geom. Anal.} {\bf 23} (2013), no. 2,
855-877.

\bibitem{bt}
D. Bartolucci and G. Tarantello, {\em Liouville type equations with singular data and their applications to periodic multivortices
for the electroweak theory}, Commun. Math. Phys. {\bf 229}, 3-47 (2002).

\bibitem{brej}
D. Bartolucci, {\em The Gauss equation on Surfaces of Bounded Integral Curvature}, Pure
and Applied Functional Analysis, Special issue on Analysis, Geometry and PDE dedicated to the
memory of Prof. Y. Reshetnyak. {\bf 9} (2) (2024), 413-425.

\bibitem{BT-2}  D. Bartolucci and G. Tarantello, Asymptotic blow-up analysis for singular Liouville type equations with applications. \emph{J. Differential Equations}, \textbf{262} (2017), 3887-3931.


\bibitem{bls} 
H. Brezis, Y. Y. Li and I. Shafrir, {\em A sup+inf inequality for some nonlinear elliptic equations
involving exponential nonlinearity}, {J. Funct. Analysis}, {\bf 115} (1993), 344-358.

\bibitem{bm}
H. Brezis and F. Merle, {\em Uniform estimates and blow-up behavior for solutions
of $-\Delta u = V(x)e^u$ in two dimensions}, Comm. Partial Differential
Equations, {\bf 16} (1991), 1223-1253.


\bibitem{BW} 
J.E. Brothers and W.P. Ziemer, {\em Minimal rearrangements of Sobolev functions}. {J. Reine Angew. Math.} {\bf 384} (1988), 153-179.

\bibitem{clmp1}
E. Caglioti, P. L. Lions, C. Marchioro and M. Pulvirenti,{\em A special class of stationary flows for two dimensional Euler
equations: a statistical mechanics description}, Commun. Math. Phys. {\bf 143}(1992) 501-525 

\bibitem{clmp2} 
E. Caglioti, P. L. Lions, C. Marchioro and M. Pulvirenti, {\em A special class of stationary flows for two dimensional Euler
equations: a statistical mechanics description. II}, Commun. Math. Phys. {\bf 174} (1995) 229-260

\bibitem{CL} C-C.Chen and C-S.Lin, {\em A sharp $\sup+\inf$ inequality for a nonlinear elliptic equation in $\R^2$}, {Commun.Anal.Geom.6, No.1}, {\bf 1-19} (1998).

\bibitem{Chen} W. X. Chen, {\em A Trudinger inequality on surfaces with conical singularities}, Proc. Am. Math. Soc. {\bf 108}, 821-832 (1990).

\bibitem{cos} P. Cosentino, {\em A Harnack type inequality for Liouville type equations with conical singularities}, PhD Thesis, Department of Mathematics of University of Tor Vergata, Doctorate XXXVII cycle, in progress.

\bibitem{F} A. Figalli, {\em A simple proof of the Mors-Sard theorem in Sobolev spaces.} {Proc. Amer. Math. Soc.} {\bf 136 (10)}  (2008) 3675-3681.

\bibitem{GT} D. Gilbarg and N. Trudinger,
 {\em Elliptic Partial Differential Equations of Second Order} , Springer-Verlag, Berlin-Heidelberg-New York (1998).

\bibitem{hu}
A. Huber, {\em Zur Isoperimetrischen Ungleichung Auf Gekr\"ummten Fl\"achen}, Acta. Math. \textbf{97}(1957), 95-101.

\bibitem{yy} Y.Y. Li,  {\em Harnack type inequality: the method of moving planes},
Comm. Math. Phys.  {\bf 200} (1999), 421--444.

\bibitem{ls}
Y.Y. Li and I. Shafrir, {\em Blow up analysis for solutions of $-\D u = V e^u$ in dimension two}, Indiana Univ. Math. J. {\bf 43}
(1994) 1255-1270.

\bibitem{lz}
Y.Y. Li and L. Zhang, {\em A Harnack type inequality for the Yamabe equation in low dimensions. }, Calc.Var. P.D.E. {\bf 20} (2004), 133-151.

\bibitem{Picard} E. Picard, {\em De l'equation $\Delta u=ke^u$ sur une surface de Riemann ferm\'{e}e}, J. Math. Pures Appl. {\bf4} (1893), 273-292;

\bibitem{Schoen} R. Schoen, Courses at Stanford University (1988) and New York University (1989) (unpublished).

\bibitem{sy2} J.Spruck and Y.Yang, {\em On Multivortices in the Electroweak Theory I:Existence of Periodic Solutions},
Comm. Math. Phys. {\bf 144} (1992), 1-16.

\bibitem{sh} I. Shafrir, {\em A $\sup+C \inf$ inequality for the equation $-\D u = V (x)e^u$}, C. R. Acad. Sci. Paris
S\'{e}r. I {\bf 315} (1992), 159-164.
            
\bibitem{suz2} T. Suzuki, Semilinear Elliptic Equations, Gakkotosho, Tokyo, 1994.

\bibitem{tarantello2005}
G. Tarantello, {\em A quantization property for blow up solutions of singular Liuoville-type equations},
J. Funct. Anal. {\bf 219} (2005), 368-399.

\bibitem{tarantello}
G. Tarantello, {\em Self-dual gauge field vortices: an analytical approach}, Progress in Nonlin. Diff. Eqs. Appl.  {\bf 72}, 
(2007) 173-247.

\bibitem{Tar} G. Tarantello,
{\em Multiple condensate solutions for the Chern-Simons-Higgs theory},
{J. Math. Phys.} {\bf 37} (1996), 3769-3796.

\bibitem{Tar3} G. Tarantello,
{\em Analytical aspects of Liouville type equations with singular sources}, Handbook Diff. Eqs., North Holland,
Amsterdam, Stationary partial differential equations, {\bf I} (2004), 491-592.

\bibitem{troyanov} M. Troyanov, {\em Prescribing curvature on compact surfaces with conical singularities}, Trans. Am.Math. Soc. \textbf{324}, 
(1991) 793-821.

\bibitem{wolansky} G. Wolansky, {\em On steady distributions of self-attracting
clusters under friction and fluctuations}, Arch. Rational Mech. An.
{\bf 119} (1992), 355--391.

\bibitem{yang} Y. Yang, "Solitons in Field Theory and Nonlinear Analysis",
Springer Monographs in Mathematics {\bf 146}, Springer, New York, 2001.

							
\end{thebibliography}
\end{document}